\newcommand{\grad}{\nabla}
\renewcommand{\to}{\rightarrow}
\newcommand{\pa}{\partial}
\newcommand{\ino}{\int_{\Omega}}
\newcommand{\fo}{\forall}
\newcommand{\ov}[1]{\overline{#1}}
\newcommand{\un}[1]{\underline{#1}}
\renewcommand{\dfrac}{\displaystyle\frac}
\newcommand{\finedim}{\hspace{\fill}$\square$}
\newcommand{\intbar}{\mathop{\int\makebox(-15.5,0){\rule[6pt]{.7em}{0.3pt}}\kern-6pt}\nolimits}
\newcommand{\ii}{\infty}
\newcommand{\eps}{\varepsilon}
\newcommand{\dt}{\delta}
\newcommand{\al}{\alpha}
\newcommand{\sg}{\sigma}
\newcommand{\om}{\Omega}
\newcommand{\graf}[1]{\left\{\begin{array}{ll}#1\end{array}\right.}
\renewcommand{\a }{\alpha }
\newcommand{\D }{\Delta }
\newcommand{\lm }{\lambda }
\newcommand{\rh }{\rho }
\renewcommand{\O }{\Omega }
\def\p{\partial}
\newcommand{\be}{\begin{equation}}
\newcommand{\ee}{\end{equation}}
\newcommand{\beq}{\begin{equation}}
\newcommand{\eeq}{\end{equation}}
\newcommand{\R}{\mathbb{R}}
\newcommand{\N}{\mathbb{N}}
\newcommand{\dis}{\displaystyle}
\newcommand{\vn}{v_n}
\newcommand{\bns}{\alpha_{n,\sigma}}
\newcommand{\bis}{\alpha_{\infty,\sigma}}
\newtheorem{theorem}{Theorem}[section]
\newtheorem{proposition}[theorem]{Proposition}
\newtheorem{definition}{Definition}[section]
\newtheorem{corollary}[theorem]{Corollary}
\newtheorem{remark}[theorem]{Remark}
\newtheorem{example}[theorem]{Example}
\newtheorem{lemma}[theorem]{Lemma}
\newcommand{\bpr}{\begin{proposition}}
\newcommand{\epr}{\end{proposition}}
\newcommand{\bex}{\begin{example}\rm}
\newcommand{\eex}{\end{example}}
\newcommand{\brm}{\begin{remark}\rm}
\newcommand{\erm}{\end{remark}}
\newcommand{\bdf}{\begin{definition}\rm}
\newcommand{\edf}{\end{definition}}
\newcommand{\bte}{\begin{theorem}}
\newcommand{\ete}{\end{theorem}}
\newcommand{\ble}{\begin{lemma}}
\newcommand{\ele}{\end{lemma}}
\newcommand{\bco}{\begin{corollary}}
\newcommand{\eco}{\end{corollary}}
\newcommand{\mycomment}[1]{}
\numberwithin{equation}{section}
\begin{document}
\title[Singular Mean Field equations]
{Onsager's Mean field theory of vortex flows with singular sources: blow up and concentration without quantization.}

\author{D. Bartolucci, P. Cosentino, L. Wu}

\address{Daniele Bartolucci, Department of Mathematics, University of Rome {\it ``Tor Vergata"} \\  Via della ricerca scientifica n.1, 00133 Roma, Italy. }
\email{bartoluc@mat.uniroma2.it}

\address{Paolo Cosentino, Department of Mathematics, University of Rome {\it ``Tor Vergata"} \\  Via della ricerca scientifica n.1, 00133 Roma, Italy. }
\email{cosentino@mat.uniroma2.it}

\address{Lina Wu, School of Mathematics and Statistics, Beijing Jiaotong University, Beijing, 100044, China}
\email{lnwu@bjtu.edu.cn}

\begin{abstract}

Motivated by the Onsager statistical mechanics description of turbulent Euler flows with point singularities,
we make a first step in the generalization of the mean field theory in [Caglioti, Lions, Marchioro, Pulvirenti; Comm. Math. Phys. (1995)]. On one side we prove the equivalence of statistical ensembles, on the other side we are bound to the
analysis of a new blow up phenomenon,
which we call ``blow up and concentration without quantization", where the mass associated with the concentration is allowed to take values in a
full interval of real numbers. This singular behavior may be regarded as lying between the classical blow up-concentration-quantization and the blow up without concentration phenomenon first proposed in [Lin, Tarantello; C.R. Math. Acad. Sci. Paris (2016)]. A careful analysis is needed to generalize known pointwise estimates in this non standard context, resulting in a complete
description of the allowed asymptotic profiles.
\end{abstract}

\keywords{Canonical and Microcanonical variational principles, Singular Mean Field equations, Blow up, Concentration, Quantization.}

\thanks{2020 \textit{Mathematics Subject classification:}  35J61, 35J75, 35Q35, 82B05. }

\maketitle
\section{Introduction}
In a pioneering paper (\cite{On}) L. Onsager derived a statistical mechanics description of large vortex structures observed in two
dimensional turbulent flows. Among other things he concluded that the maximum entropy (thermodynamic equilibrium) states could be realized by
highly concentrated configurations,
{where} like vortices attract each other. These states are negative temperature states,
meaning that, above a certain energy $E_0\in (0,+\ii)$, the equilibrium entropy $S(E)$ is decreasing as a function of the energy $E$.
Those physical arguments has been later turned into rigorous proofs (\cite{clmp1}, \cite{clmp2}, \cite{K}, \cite{KL}, \cite{Ne}). We refer to \cite{ESp}, \cite{ESr}
and references therein for a complete discussion about the Onsager theory and a more detailed account of the impact of those ideas and to
\cite{KWa}, \cite{Ne} for the Onsager model with random vorticity.

\medskip

Following the Onsager ideas and assuming the vorticity to be positive, in the framework of mean field theory and in a certain sharp range of temperatures
one can prove (\cite{clmp1}, \cite{clmp2}) equivalence of statistical ensembles
where the thermodynamic equilibrium is attained by entropy maximizing configurations which
solve the so called mean field equation (which is \eqref{mf:lm.intro} below with $\sg=0$ and $\lm<8\pi$). However, when defined on $\mathbb{S}^2$,
as far as $\lm<16\pi$ solutions of the mean field equation are just the trivial solutions (see \cite{Lin0}, \cite{Lin1} and \cite{GM1}). Therefore, with the aim of describing vorticity fields of geophysical interest \cite{Marcus},
it seems natural to introduce in the Onsager mean field model a fixed, macroscopic co-rotating or counter-rotating vortex.
However,
with the unique exception of a short comment in \cite{ck},
it seems that this case has not been discussed in literature. Indeed, all the works at hand so far about point singularities in mean field equations
has been concerned with the so called ``conical singularities" see \cite{bclt,BdMM,BGJM,BJL,bl1,BMal,BM2,bt,bt2,CLin4,CLin5,dwz,KuLi,pt,tar-sd,Tar3,wz1,wz2,wuz,z}
and Remark \ref{rem:conicsing} below. It is our aim here to make a first step in the generalization of the mean field theory in \cite{clmp2} to cover  this singular case.

\medskip

Let $\O\subset\R^2$ be a bounded piecewise $C^2$ domain (see definition \ref{piecewisec2}), $\mathcal{P}(\O)$ be the space of vorticity densities,
\begin{equation*}
 \mathcal{P}(\O)=\left\{\rho\in {L}^1(\O)|\,\rho\geq0\,\,\text{a.e. in}\,\,\O,\int_{\O}\rho\log \rho<+\infty, \int_{\O}\rho=1  \right\},
\end{equation*}
and set
$$
G[\rho](x)=\int_{\O}G_\O(x,y)\rho(y)\,dy,
$$
where $G_\O(x,y)$ is the Green function on $\O$ with Dirichlet boundary conditions, that is,
\begin{equation*}
 G_\O(x,y)=-\tfrac{1}{2\pi}\ln|x-y|+R_\O(x,y),
\end{equation*}
where $R_\O$ is the regular part. Also, let $0\in \O$ and assume the existence of a fixed vortex whose strength is
$\sigma\in\R$ and whose center coincides with $0$. For a fixed $\rho\in\mathcal{P}(\O)$ we define the entropy,
\begin{equation*}
 \mathcal{S}(\rho)=-\int_{\O}\rho(x)\log(\rho(x))\,dx,
\end{equation*}
and the energy of the density $\rho$,
\begin{equation*}
 \mathcal{E}(\rho)=\tfrac{1}{2}\int_{\O}\rho(x)G[\rho](x)\,dx,
\end{equation*}
and of the fixed vortex,
$$
\mathcal{E}_\sg(\rho)=\sigma\int_{\O}\rho \,G_\O(x,0)\,dx.
$$
We define the (scaled) Free Energy functional,
\begin{align}
 \nonumber
 \mathcal{F}_{\lambda}(\rho)&= -\mathcal{S}(\rho)-\lambda(\mathcal{E}(\rho)-\mathcal{E}_\sg(\rho)) \\
 \nonumber
 &= \int_{\O}\rho\log(\rho)\,dx-\tfrac{\lambda}{2}\int_{\O}\rho\, G[\rho]\,dx-
 \tfrac{\sigma\lambda}{2\pi}\int_{\O}\rho\log|x|\,dx+\sigma\lambda\int_{\O}\rho \,R_\O(x,0)\,dx.
\end{align}

The Canonical Variational Principle (CVP) with a singular source, is defined as follows,
\begin{align}\label{cvp:intro}
 f(\lambda)=\begin{cases}
             \underset{\rho\in\mathcal{P}(\O)}\inf \mathcal{F_\lambda(\rho)}, &\text{if}\,\,\lambda\geq0, \\
             \underset{\rho\in\mathcal{P}(\O)}\sup \mathcal{F_\lambda(\rho)}, &\text{if}\,\,\lambda<0,
            \end{cases}
\end{align}
where $\mathcal{F}_\lambda$ is the (scaled) free energy, while the Microcanonical Variational Principle (MVP) with a singular source takes the form,
\begin{align}\label{mvp:intro}
 S(E)=\underset{\rho\in\mathcal{P}_E(\O)}\sup \mathcal{\mathcal{S}(\rho)},
\end{align}
\begin{equation*}
 \mathcal{P}_E(\O)=\left\{\rho\in \mathcal{P}(\O)|\, \mathcal{E}(\rho)-\mathcal{E}_\sg(\rho)=E\right\}.
\end{equation*}

\begin{remark}\label{signsg} We denote with $\lm=-\frac{1}{\kappa T_{\rm stat}}$ the negative of the inverse statistical Temperature  $T_{\rm stat}$, where $\kappa$ is the Boltzmann constant and consider only the case $\lm\geq 0$.
The convention about the ``$-$" sign for the $\mathcal{E}_\sg(\rho)$ term is just to keep the notations close enough to the one about
conical singularities, see Remark \ref{rem:conicsing}.  The drawback is that a vortex with total vorticity
$\sg<0$ is co-rotating with the positive density $\rho$, while it is counter-rotating if $\sg>0$.
\end{remark}

We assume without loss of generality that $\om$ has unit area,
$$
|\om|=1,
$$
and consider only the energies larger than that of the uniform distribution $E_0=E_0(\om)$ (see \eqref{e0sec4}), since, from known results about the ``regular case" $\sg=0$, this range should correspond to the physically more interesting negative temperatures states,
see \cite{clmp2}, \cite{bart-5} and references quoted therein. Here and in the rest of this work we set,
\begin{align}
\label{lambdasigma}
\lambda_{\sigma}=\begin{cases}
            \frac{8\pi}{1+2|\sigma|}, &\text{if}\,\,\sigma<0, \\
            8\pi, &\text{if}\,\,\sigma\geq 0,
            \end{cases}
\end{align}
and
\begin{equation}\label{Hlm:intro}
 H_{\lm}(x)=e^{-\sigma\lm G_\O(x,0)}=|x|^{\frac{\lambda}{2\pi}\sigma}e^{-\sigma\lm R_\O(x,0)}.
\end{equation}

Our first result is a generalization to the singular case of well known facts established in \cite{clmp1},\cite{clmp2}. The domain $\om$ is assumed to be piecewise $C^2$ according to Definition \ref{piecewisec2} below.

\begin{theorem}\label{thm:cvpintro} For any $\lm\in [0,\lm_\sg)$ there exists one and only one solution $\rho_{\lm}$ of  \eqref{cvp:intro}
 and $\psi_{\lm}=G[\rho_\lm]$ is a solution of the Singular Mean Field Equation,
 \begin{align}
\begin{cases} \label{mf:lm.intro}
 -\D \psi_{\lambda}=\rho_{\lambda}=\dfrac{H_{\lm}e^{\lambda \psi_{\lambda}}}{ \int\limits_\O  H_{\lm}e^{\lambda \psi_{\lambda}}}\,\,\,\,\,\,&\text{in}\,\,\O,\\
 \psi_{\lambda}=0 &\text{in}\,\,\p\O.
\end{cases}
\end{align}
\end{theorem}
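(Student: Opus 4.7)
The plan is a direct–method minimisation of $\mathcal{F}_\lm$ on $\mathcal{P}(\O)$, exactly parallel to the treatment of the regular case $\sg=0$ in \cite{clmp1,clmp2}, the novelty being that the classical Moser–Trudinger inequality is replaced by its singular counterpart with a Dirac mass at the interior point $0\in\O$. From the minimizer a Lagrange multiplier argument produces \eqref{mf:lm.intro}, and uniqueness in the subcritical range $[0,\lm_\sg)$ follows by a strict–convexity / second–variation argument.

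The key analytic step is coercivity. Rewriting
\[
\mathcal{F}_\lm(\rh)=\int_\O\rh\log\rh\dx-\tfrac{\lm}{2}\int_\O\rh\,G[\rh]\dx-\int_\O\rh\log H_\lm\dx
\]
and using the relative–entropy inequality $\int_\O\rh\log(\rh/\mu_\rh)\dx\geq 0$ with the trial Gibbs density $\mu_\rh=Z_\rh^{-1}H_\lm e^{\lm G[\rh]}$, $Z_\rh=\int_\O H_\lm e^{\lm G[\rh]}\dx$, gives $\mathcal{F}_\lm(\rh)\geq\tfrac{\lm}{2}\int_\O\rh\,G[\rh]\dx-\log Z_\rh$. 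Since $H_\lm(x)=|x|^{2\alpha_\lm}w_\lm(x)$ with $2\alpha_\lm=\lm\sg/(2\pi)$ and $w_\lm=e^{-\sg\lm R_\O(\cdot,0)}\in C^0(\ov{\O})$ bounded above and below, the singular Moser–Trudinger–Chen inequality applied to $v=\lm G[\rh]\in W_0^{1,2}(\O)$ gives
\[
\log Z_\rh\leq\frac{\lm^{2}\int_\O\rh\,G[\rh]\dx}{16\pi(1+\min\{\alpha_\lm,0\})}+\lm\,\ov{G[\rh]}+C_\lm.
\]
A direct algebraic check shows that $\frac{\lm}{16\pi(1+\min\{\alpha_\lm,0\})}<\tfrac{1}{2}$ is equivalent to $\lm<\lm_\sg$ in both regimes $\sg\geq 0$ and $\sg<0$ of \eqref{lambdasigma}; a Poincar\'e estimate absorbs $\ov{G[\rh]}$ into the quadratic term, producing a uniform lower bound for $\mathcal{F}_\lm$ and coercivity in the Dirichlet seminorm $\rh\mapsto(\int_\O\rh G[\rh]\dx)^{1/2}$.

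For existence, a minimizing sequence $\{\rh_n\}$ has uniformly bounded Dirichlet seminorm, and an Orlicz / Fenchel–Young estimate (using that $e^{tG_\O(\cdot,0)}\in L^1(\O)$ for all $t<4\pi$, together with the fact that $|\sg|\lm<4\pi$ whenever $\lm<\lm_\sg$) converts this into a uniform bound on $\int_\O\rh_n\log\rh_n\dx$. Equi–integrability (de la Vall\'ee–Poussin) then yields a subsequence $\rh_n\wk\rh_\lm\in\mathcal{P}(\O)$ in $L^1(\O)$; convexity of $t\mapsto t\log t$ gives lower semicontinuity of the entropy, while $L^1\to W^{1,q}$ elliptic regularity ($q<2$) on the piecewise $C^{2}$ domain produces strong $L^p$ convergence of $G[\rh_n]$ for any $p<\ii$, allowing passage to the limit in the quadratic and linear ($\log|x|$, $R_\O(\cdot,0)$) terms. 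Hence $\rh_\lm$ is a minimizer. Computing $\tfrac{d}{dt}\big|_{t=0}\mathcal{F}_\lm(\rh_\lm+t\eta)=0$ along admissible variations $\eta$ with $\int_\O\eta\dx=0$ that preserve positivity forces $\log\rh_\lm-\lm G[\rh_\lm]-\log H_\lm\equiv c$; the normalisation $\int_\O\rh_\lm=1$ pins down $c=-\log\int_\O H_\lm e^{\lm G[\rh_\lm]}\dx$ and yields \eqref{mf:lm.intro}, the boundary condition for $\psi_\lm=G[\rh_\lm]$ being built into the definition of $G$.

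Uniqueness is obtained by a second–variation computation: along $\rh_t=t\rh_\lm^{(1)}+(1-t)\rh_\lm^{(2)}$ with $\eta=\rh_\lm^{(1)}-\rh_\lm^{(2)}$ and $\int_\O\eta\dx=0$,
\[
\tfrac{d^2}{dt^2}\mathcal{F}_\lm(\rh_t)=\int_\O\tfrac{\eta^2}{\rh_t}\dx-\lm\int_\O\eta\,G[\eta]\dx,
\]
and a dual form of the same singular Moser–Trudinger inequality (equivalently, a weighted $H^{-1}$ estimate under the zero–mean constraint on $\eta$) forces this to be strictly positive for $\lm<\lm_\sg$, so $\eta\equiv 0$; alternatively one can invoke a singular–source version of Suzuki's uniqueness theorem for \eqref{mf:lm.intro} in the subcritical regime, available through adaptations of \cite{bart-5,BMal,CLin4}. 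The main obstacle I expect is the calibration of the singular Moser–Trudinger step: since $\alpha_\lm$ itself depends on $\lm$, one must apply the inequality with $\lm$–dependent parameter and track the constants $C_\lm$ so that the coercivity window is exactly $[0,\lm_\sg)$ and not some strictly smaller threshold. A secondary technical point is that $\p\O$ is only piecewise $C^2$, so the elliptic regularity and Poincar\'e steps have to be invoked with care (though in this setting they are by now standard).
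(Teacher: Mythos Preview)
Your existence argument is correct but follows a different route from the paper. The paper works on the dual side: it introduces the functional
$\mathcal{J}_\lambda(\psi)=\tfrac{\lambda}{2}\int_\Omega|\nabla\psi|^2-\log\int_\Omega H_\lambda e^{\lambda\psi}$
on $W^{1,2}_0(\Omega)$, proves a pointwise comparison $\mathcal{F}_\lambda(\rho)\geq\mathcal{J}_\lambda(G[\rho])$ and $\mathcal{F}_\lambda(\rho_\psi)\leq\mathcal{J}_\lambda(\psi)$ (with equality iff the pair is related by the mean field map), and concludes $f(\lambda)=j(\lambda)$. Coercivity and existence of a minimizer are then read off directly from the singular Moser--Trudinger inequality of Adimurthi--Sandeep applied to $\mathcal{J}_\lambda$, and the Euler--Lagrange equation is \eqref{mf:lm.intro} without any further work. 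Your approach minimises $\mathcal{F}_\lambda$ directly on $\mathcal{P}(\Omega)$; this also works, but you buy two extra obligations the paper avoids. First, the Lagrange multiplier step requires strict positivity of the minimizer (otherwise the variation $\rho_\lambda+t\eta$ only gives a one--sided inequality on $\{\rho_\lambda=0\}$); this is true but needs an argument, which you skip. Second, you need to pass to the weak $L^1$ limit in the singular term $\int\rho_n\log|x|$, which requires the Orlicz/Young control you sketch.

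Your primary uniqueness argument has a genuine gap. The second variation along $\rho_t=t\rho^{(1)}+(1-t)\rho^{(2)}$ is indeed $\int\eta^2/\rho_t-\lambda\int\eta\,G[\eta]$, but the claim that a ``dual form of the singular Moser--Trudinger inequality'' forces this to be positive for all $\lambda<\lambda_\sigma$ is unsubstantiated: the dual of Moser--Trudinger is a log--HLS inequality for probability densities, not a weighted $H^{-1}$ bound for zero--mean $\eta$ with the (unknown, $t$--dependent) weight $1/\rho_t$. There is no reason this quadratic form is positive--definite uniformly along the segment for all subcritical $\lambda$. The paper does not attempt any convexity argument; it invokes the PDE uniqueness results for \eqref{mf:lm.intro} directly, namely \cite{bl1} for $\sigma>0$ and \cite{BJL,BCJL} for $\sigma<0$, noting that the $\lambda$--dependence of the exponent $\alpha=\lambda\sigma/(4\pi)$ only amounts to a rescaling of the relevant threshold. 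Your fallback to ``a singular--source version of Suzuki's uniqueness theorem'' is the right idea, though the precise references you cite are not the ones that cover this situation.
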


The proof uses known arguments about non convex optimization (\cite{clmp1}, \cite{csw}) and a singular Moser-Trudinger inequality (\cite{as}). The same
threshold is obtained for the (CVP) on compact surfaces, whose details will be discussed elsewhere (\cite{BCYZZ}). Remark that
$H_{\lm}$ is an $L^1(\om)$ function only if the following minimal necessary assumption is satisfied,
\begin{equation}\label{lsg}
\mbox{if }\quad \sg<0\quad \mbox{ then }\quad \lm<\frac{4\pi}{|\sg|},
\end{equation}
which is granted in Theorem \ref{thm:cvpintro} by the fact that $\lm<\frac{8\pi}{1+2|\sigma|}<\frac{4\pi}{|\sg|}$ as far as $\sg<0$.

\begin{remark}\label{rem:conicsing} Theorem \ref{thm:cvpintro} is sharp in the sense that if $\sg<0$ and $\om$ is a disk centered at the origin
(which is the location of the fixed vortex), then the unique solutions of \eqref{mf:lm.intro} blow up as $\lm\to(\lm_\sg)^-$
and there is no solution for $\lm\geq \lm_\sg$. The claim about blow up of solutions follows from explicit evaluations about
radial solutions and the uniqueness of solutions  (\cite{BJL},\cite{BCJL}), see section \ref{sec5}. The claim about
non existence follows from a straightforward adaptation of a well known argument based on the Pohozaev identity (\cite{clmp2}).
As far as $\sg>0$, the threshold $\lm_\sg$ is $8\pi$ and is sharp as well, although the problem is more subtle,
see \cite{bl1}, \cite{bl2}. Remark that in all the quoted results (\cite{BJL,bl1,bl2}) the ``singular" term in \eqref{Hlm:intro}
takes the form $|x|^{2\alpha}$, which is the classical expression associated to conical singularities of
order $\alpha>-1$. As far as the results in \cite{BCJL,BJL,bl1,bl2} are concerned, the overall effect of this difference
is just that of a scaling of parameters.
We will see in section \ref{sec2} that the situation is more involved as far as blow up issues are concerned.\\
This being said, the universal convention about conical points is that the $\alpha$ in the exponent of $|x|^{2\al}$ is the order of the singularity, whence
positive singularities are characterized by the fact that $\alpha>0$.
This is the origin of our (physically rather wired) convention about the sign of the energy
$\mathcal{E}_\sg$, see Remark \ref{signsg}.
\end{remark}

The situation for the (MVP) is more subtle since the energy constraint, due to the presence of the fixed vortex, is not closed in the weak-$L^1(\om)$ topology.
To overcome this issue we regularize the energy term $\mathcal{E}_\sg(\rho)$ and replace the Green function with
\begin{equation}\label{def:Gn}
    G_n(x)=-\frac{1}{2\pi}\log(h_n(x)) + R_n(x),\quad x\in \overline{\om},
\end{equation}
where
\begin{equation*}
 h_n(x)=\left({\epsilon_n^2+|x|^2}\right)^{\frac{1}{2}},\quad \epsilon_n\to 0^+,
\end{equation*}
which converges to $|x|$ in $C^{t}(\O)$, for any $t\in(0,1)$, as $n\to\infty$
and $R_n$ is the unique harmonic function
such that $R_n(x)=\frac{1}{2\pi}\log(h_n(x))$ on $\pa \om$. It is easy to see that $0\leq G_n(x)\leq G_\om(x,0)+O(\eps_n^2)$.
Clearly $R_n\to R_\om(x,0)$ uniformly in
$\om$,  and $G_n\to G_\om(x,0)$ locally uniformly far  away from $x=0$, where $G_\om$ and $R_\om$ are the standard Green function and its regular part, 
see section \ref{sec4}. Thus, as far as the (MVP) is concerned,
$\mathcal{E}_\sg(\rho)$ will be replaced by the regularized energy,
$$
\mathcal{E}_{\sg,n}(\rho)=\sg\int_{\O} \rho G_n= -\tfrac{\sigma}{2\pi}\int_{\O}\rho\log(h_n)+\sigma\int_{\O}\rho \,R_n.
$$
At this point we would like to solve the variational principle
\eqref{mvp:intro} at fixed $n$ with the regularized energy constraint
$$
E=\mathcal{E}(\rho)-\mathcal{E}_{\sg,n}(\rho),
$$
and then pass to the limit as $n\to +\ii$. It is exactly at this stage that we face a first subtle problem,
since the stream function relative to the entropy maximizers (say $\rho_n$) of the regularized variational principle (see Theorem \ref{thm:mvpn} below)
satisfies a mean field equation of the form,
\begin{align}
\begin{cases} \label{mf:n.intro}
 -\D \psi_{n}=\rho_{n}=\dfrac{H_{n}e^{  \lm_n\psi_{n}}}{\int_\O H_{n}e^{  \lm_n\psi_{n}}}\,\,\,\,\,\,&\text{in}\,\,\O,\\
 \psi_{n}=0 &\text{in}\,\,\p\O,
\end{cases}
\end{align}
where,
\begin{equation}\label{Hlmn.intro}
 H_{n}(x)=e^{-\sigma\lm_n G_n(x)}=(h_n(x))^{\frac{\lm_n}{2\pi}\sigma}e^{-\sigma\lm_n R_n(x)}.
\end{equation}
In this situation we would like to rely on some sharp a priori estimates for solutions of \eqref{mf:n.intro}-\eqref{Hlmn.intro},
in the same spirit of the concentration-compactness theory for singular Liouville-type equations in \cite{bm,yy} and \cite{bt}.
It turns out, as discussed in \cite{det}, \cite{llty} and \cite{os},
that such theory is not enough to describe the asymptotic behavior of sequences of solutions
of \eqref{mf:n.intro}-\eqref{Hlmn.intro}. The point is that if a sequence of solutions $\psi_n$ blows up at the origin,
since $h_n(x)$ is converging to $|x|$ but it is not exactly $|x|$, then \un{it is not true in general} that
the density $\rho_{n}$ in \eqref{mf:n.intro} concentrates, modulo subsequences, to a linear combination of Dirac measures. This is the so called phenomenon of
``blow up without concentration", see in particular the examples constructed in \cite{det}, \cite{llty}. Nevertheless, by using known blow up arguments (\cite{B0,bt,det,llty}) one can prove that
the blow up at the origin comes with a
``minimal inverse temperature" or either a ``minimal mass" (\cite{B0},\cite{bt},\cite{det}),
that is, even if $\rho_n$ need not weakly converge to a Dirac delta, still its weak-$*$ limit is a measure that charges the origin with some
quantity uniformly bounded from below, see the (minimal mass) Lemma \ref{minimalmasslemma} below. Of course, similar conclusions could be drawn
by using the general result in \cite{os}, which however was derived by somehow different methods, see the related discussion in the introduction of \cite{os}.
This is why, in view of the new aspects of the blow up behavior which will be discussed here,
we need to derive an independent proof of these facts, see Theorem \ref{Concentration} and Remark \ref{rem:llty}. It is not surprising afterall that the minimal mass is exactly $\lm_\sg$,
showing once more that Theorem \ref{thm:cvpintro} is sharp. We will see that
these facts are enough to establish the existence of solutions of the (MVP). Indeed we have,

\begin{theorem}\label{thm:mvp} Let $\om$ be a piecewise $C^2$ domain which contains the origin. For any $E\in [E_0,+\ii)$, there exists at least one solution
$\rho$ of the {\rm (MVP)} and there exists $\lm=\lm(E)$ such that $\rho=\rho_{\lm}$ is a solution of,
\begin{equation}\label{mvpe}
\rho_{\lm}=\dfrac{H_{\lm} e^{\lm G[\rho_{\lm}]}}{\int\limits_{\om} H_{\lm} e^{\lm G[\rho_{\lm}]}}\quad \mbox{a.e. in }\om,
\end{equation}
and $\psi_{\lm}=G[\rho_{\lm}]$ is a solution of the Singular Mean Field Equation \eqref{mf:lm.intro}, where $\lm(E)<\frac{4\pi}{|\sg|}$ as far as $\sg<0$.
In particular $\rho$ is the limit of a sequence of
entropy maximizers $\rho_n:=\rho_{\lm_n}$ as defined in \eqref{mf:n.intro}.
\end{theorem}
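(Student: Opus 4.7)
The plan is to regularize the singular vortex energy, solve the regularized microcanonical problem at each level $n$ as already announced before the statement, and then pass to the limit using the sharp a priori analysis developed earlier in the paper (Theorem \ref{Concentration} and Lemma \ref{minimalmasslemma}).

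First, at fixed $n$, I would rely on Theorem \ref{thm:mvpn} to obtain an entropy maximizer $\rho_n$ for
$$S_n(E) = \sup\bigl\{\mathcal{S}(\rho) : \rho \in \mathcal{P}(\Omega),\ \mathcal{E}(\rho)-\mathcal{E}_{\sigma,n}(\rho) = E\bigr\}.$$
At this regularized level existence is standard: $G_n$ is bounded and continuous on $\overline{\Omega}$, so $\mathcal{E}_{\sigma,n}$ is weakly-$L^1$ continuous on $\mathcal{P}(\Omega)$, the constraint set is weakly-$L^1$ closed, and $-\mathcal{S}$ is weakly lower semicontinuous; coercivity comes from the singular Moser-Trudinger inequality used in the proof of Theorem \ref{thm:cvpintro}. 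A Lagrange multiplier argument then produces $\lambda_n$ such that $\rho_n$ satisfies \eqref{mf:n.intro}--\eqref{Hlmn.intro}, and the concavity of $S_n$ in $E$ together with $E\ge E_0$ yields $\lambda_n\ge 0$.

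Second, the core of the proof is the uniform upper bound $\limsup_n \lambda_n < \lambda_\sigma$. I would argue by contradiction. If, along a subsequence, $\lambda_n \to \lambda^* \geq \lambda_\sigma$, then the concentration-compactness Theorem \ref{Concentration} forces $\psi_n=G[\rho_n]$ to blow up at the origin, and the minimal-mass Lemma \ref{minimalmasslemma} guarantees that $\rho_n$ charges the origin with some mass $m_\sigma > 0$. A direct estimate of $\mathcal{E}(\rho_n)-\mathcal{E}_{\sigma,n}(\rho_n)$ along such a concentrating sequence then shows this quantity tends to $+\infty$: the logarithmic self-energy $\tfrac12\int\rho_n G[\rho_n]$ diverges at rate $-\tfrac{m_\sigma^2}{4\pi}\log r_n$ (with $r_n$ the concentration scale), whereas the regularized coupling $\sigma \int \rho_n G_n$ cannot fully cancel it since $G_n$ is cut off at scale $\epsilon_n$ and the minimal mass $m_\sigma$ provided by the non-quantized blow-up analysis exceeds the balance threshold. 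This contradicts $E \in \mathbb{R}$ and proves $\lambda_n < \lambda_\sigma$ uniformly.

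Third, along a further subsequence $\lambda_n\to\lambda\in[0,\lambda_\sigma)$; the subcritical branch of Theorem \ref{Concentration} then gives uniform $L^\infty$ bounds on $\psi_n$. Since $h_n\to|x|$ in $C^t(\Omega)$ and $R_n\to R_\Omega(\cdot,0)$ uniformly on $\overline{\Omega}$, we have $H_n\to H_\lambda$ in $L^1(\Omega)$, and standard elliptic regularity produces $\psi_n\to\psi_\lambda$ in $C^0(\overline{\Omega})$ with $\rho_n\to\rho_\lambda$ strongly in $L^1(\Omega)$. The limit $\psi_\lambda$ solves \eqref{mf:lm.intro} and $\rho=\rho_\lambda$ satisfies \eqref{mvpe}. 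Strong convergence transfers the energy constraint to $\mathcal{E}(\rho)-\mathcal{E}_\sigma(\rho)=E$ (the logarithmic singularity at the origin is integrable against the bounded $\rho$) and the entropy, so $\rho$ is indeed an MVP maximizer. The sharp threshold $\lambda(E)<4\pi/|\sigma|$ for $\sigma<0$ is immediate from $\lambda(E)<\lambda_\sigma=\tfrac{8\pi}{1+2|\sigma|}<\tfrac{4\pi}{|\sigma|}$. The main obstacle is clearly the uniform bound of Step 2: the ``blow up and concentration without quantization" phenomenon emphasized in the abstract means the mass at the origin ranges in a continuum, so classical quantized blow-up arguments do not apply verbatim, and the refined pointwise estimates of the paper are needed precisely to rule out a degenerate concentration that could keep $\mathcal{E}-\mathcal{E}_{\sigma,n}$ finite.
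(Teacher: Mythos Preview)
Your proposal has two genuine gaps, both in Step~2 and in the transition from Step~2 to Step~3.

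First, the statement you aim for, $\limsup_n\lambda_n<\lambda_\sigma$, is neither claimed in Theorem~\ref{thm:mvp} nor proved in the paper; the theorem only asserts $\lambda(E)<4\pi/|\sigma|$ when $\sigma<0$, and domains of Type~II in Definition~\ref{typeI} are precisely those for which $\lambda(E)\ge\lambda_\sigma$ can occur. More importantly, your contradiction argument is circular. Theorem~\ref{Concentration} does not say that $\lambda_n\to\lambda^*\ge\lambda_\sigma$ \emph{forces} blow-up; it assumes \eqref{bounded}--\eqref{explosion} and describes what follows. Without an a~priori bound on $\lambda_n$ you cannot even invoke the concentration-compactness alternative, since the weight $H_n$ depends on $\lambda_n$ through its exponent. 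The paper's route is entirely different: it bounds $\lambda_n$ via the \emph{variational} structure, not via blow-up analysis. For simply connected $\Omega$ one uses the Pohozaev identity; for multiply connected $\Omega$ (Lemmas~\ref{lem:connect} and~\ref{lem:connect2}, adapting an argument of \cite{BCN24}) one shows that a suitable sublevel set carries positive limiting mass (via an extension of Proposition~\ref{rho:0}), whence the partition function $\mathcal{Z}_n=\int_\Omega H_n e^{\lambda_n\psi_n}$ stays bounded, and then the entropy identity $\mathcal{S}(\rho_n)=-2\lambda_n\mathcal{E}(\rho_n)+\lambda_n\mathcal{E}_{\sigma,n}(\rho_n)+\log\mathcal{Z}_n$ yields $\lambda_n\le\bar\lambda$. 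The separate bound $\lambda_\infty<4\pi/|\sigma|$ for $\sigma<0$ comes from $\mathcal{Z}_n\to+\infty$ otherwise, again using this identity.

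Second, once $\lambda_n\to\lambda_\infty$ is bounded, you still need to show $\rho_\infty$ is an MVP maximizer. Uniform convergence of $\psi_n$ is obtained not by a ``subcritical branch'' of Theorem~\ref{Concentration} but by contradiction: if $v_n$ blows up, Theorem~\ref{Concentration-Compactness alternative}(iii) gives $\lambda_n\rho_n\rightharpoonup\sum\beta_i\delta_{q_i}$ in the sense of measures, which is incompatible with the weak-$L^1$ convergence $\rho_n\rightharpoonup\rho_\infty\in L^1(\Omega)$ you already established from $\int\rho_n\log\rho_n\le C$. Finally, the fact that $\mathcal{S}(\rho_\infty)=S(E)$ is not automatic: one constructs, for any near-maximizer $\rho^{(k)}\in\mathcal{P}_E(\Omega)$, a small $L^\infty$ perturbation $\rho_{k,n}\in\mathcal{P}^{(n)}_E(\Omega)$ with $\mathcal{S}(\rho_{k,n})\ge\mathcal{S}(\rho^{(k)})-\delta$ (Lemma~\ref{lem:unif.conv}), and this diagonal argument is where the regularized constraint sets are actually compared to the singular one.
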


However the proof is not trivial, essentially because we do not know much about $\lm_n=\lm_n(E)$ in \eqref{mf:n.intro}, which arises
just as a Lagrange multiplier related to the energy constraint. Actually, as far as $\sg<0$, even the fact that
$\lm(E)<\frac{4\pi}{|\sg|}$ requires a proof. The classical uniform bound from above for $\lm_n$ via the Pohozaev identity (\cite{clmp1})
fails for connected but not simply connected domains, as well known examples show (\cite{dPKM}, \cite{KPV}, \cite{ns}).
We will need a careful adaptation of an argument recently provided in \cite{BCN24} to obtain a similar estimate for the $\lm_n$
corresponding to entropy maximizers obtained in Theorem \ref{thm:mvpn}. The underlying idea is to use a different (compared to \cite{clmp2})
argument to come up with the positivity of the entropy maximizers, suitable to be used in the various situations at hand (see Proposition \ref{rho:0}).

\bigskip

At this point, as in \cite{clmp1}, \cite{clmp2}, we can prove the equivalence of the statistical ensembles,
for a particular class of domains.

\begin{definition}\label{typeI} A piecewise $C^2$ domain $\om$ containing the origin is said to be of Type I if for any $E>E_{0}(\om)$
there exists a solution of \eqref{mvpe} with $\lm(E)<\lm_\sg$, is said to be of Type II otherwise.
\end{definition}

For domains of Type I, there is a one to one correspondence between solutions of the (CVP) and of the (MVP) and the entropy $S(E)$ is a smooth and concave
function of the energy, see Theorem \ref{thm:Equiv}.
As far as $\sg>0$, modulo the variable rescaling needed to move from conical singularities
(see Remark \ref{rem:conicsing}) domains of Type I/II have been completely characterized in \cite{bl2}. In particular one can prove that,
for domains of Type I, as $E\to +\ii$ the entropy maximizers concentrate with inverse temperature $\lm(E)\to (8\pi)^{-}$
to a Dirac delta whose singular point is a maximum point of,
$$
\gamma_\om(x)-\sg G_\om(x,0)\quad \mbox{where} \quad \gamma_\om(x)=R_\om(x,x), \;x\in\om.
$$
Similar results hold for $\sg<0$ and the full characterization of domains of Type I, in the same spirit of
\cite{CCL} and \cite{bl2}, \cite{bl}, will be discussed elsewhere (\cite{BCJL}). However, still for $\sg<0$,  any disk centered at the origin is of Type I, see section \ref{sec5}.\\
Concerning the case $\sg>0$
(i.e. a {negative} counter-rotating vortex fixed at
the origin with strength $-\sg$), the states were the (positive) vorticity $\rho$ possibly concentrates
on the fixed counter-rotating vortex location indicate a sort of collision of vortices of opposite sign. For solutions of \eqref{mf:lm.intro} and for domains of Type I, general results in \cite{bt} (see Theorem \ref{massboundarycontrol0} below)
show that whenever this sort of collision occurs, then necessarily $\sg<\frac12$. In other words, in this singular version of the Onsager mean field model, a positive vortex cannot collide with the fixed negative vortex of strength $-\sg$ as far as $\sg\geq \frac12$.\\
From the point of view of applications to geophysical flows, the
Hexagon vortex structure at the north pole of Saturn (\cite{Sat}) resembles quite much profiles of planar solutions of singular Liouville equations with $\frac{\sg \lm}{4\pi}\in \mathbb{N}$
(\cite{ck,pt,TY}), and it would be interesting to have some quantitative estimate about the entropy of these non radial solutions. It would be also interesting to come up,
by using  \eqref{mf:lm.intro}, with entropy estimates for a stationary flow mimicking the Jupiter Red Spot (\cite{Marcus}) and its local circulation. We will discuss applications of this sort and refined estimates in the same spirit of \cite{bwz1,bwz2}
in a forthcoming paper (\cite{BCYZZ}).\\ 

\bigskip

Instead, we will be concerned here with the high energy limit, relative to the case $\sg>0$, of solutions $\rho_n$
(see \eqref{mf:n.intro}) of the regularized problem derived in Theorem \ref{thm:mvpn}. The motivation has to do with a peculiar aspect of the
theory which we explain in details in section \ref{sec6}, while for the time being we describe the needed blow up analysis of solutions of \eqref{mf:n.intro}. First of all, since we
will analyze the limit $E_n=\mathcal{E}(\rho_n)-\mathcal{E}_{\sg,n}(\rho_n)\to +\ii$ for $\sg>0$, we have to deal with the competition between the energy of the density and that of the vortex. Therefore we are bound to make one step beyond the minimal mass Lemma.
By using the (minimal mass) Lemma \ref{minimalmasslemma} together with an argument first pursued in \cite{bt} or either with the general theory in \cite{os}, we have a natural assumption
which guaranties that if blow up happens we also have concentration,
see Theorem \ref{Concentration} below. It is interesting that
the needed hypothesis, which, assuming $\lm_n\to \lm_\ii$, takes the form,
\begin{equation}\label{lsgp}
\mbox{if }\quad \sg>0\quad \mbox{ then }\quad \lm_\ii\leq \frac{4\pi}{\sg},
\end{equation}
restores somehow the symmetry with respect to the necessary assumption \eqref{lsg} for $\sg<0$. Remark that,
 due to the peculiar dependence of the exponent of $H_{n}$ by $\lm_n$, if $\sg$ is small  then $\lm_n$ can be larger than $8\pi$, but
this is not in contrast with the examples of blow up without concentration discussed in \cite{llty}, where the quantity which here we denote
$\al_{n,\sg}=\frac{\lm_n}{4\pi}{\sg}$ and which, in view of \eqref{lsgp}, satisfies $\al_{n,\sg}\to \al_{\ii,\sg}\leq 1$, is kept fixed at some value larger than 1. Observe also that
the minimal mass Lemma to be used together with \eqref{lsgp}
implies that $\sg\leq \frac12$ (see Theorem \ref{massboundarycontrol}).  In this situation we face a peculiar new phenomenon which we call ``\un{blow-up and concentration without quantization}",
which stands somewhere between the classical blow up-concentration-quantization phenomenon (\cite{bm}, \cite{bt}, \cite{ls}) and the blow up without concentration
as pushed forward in \cite{det,llty,os}, see Theorem \ref{massboundarycontrol}.
In other words, we have blow up and concentration but, unlike the case of conical singularities (\cite{bt}),
the mass associated to the singular blow up point
is not anymore fixed, being allowed instead to take values in the non degenerate interval of real numbers of the form,
\begin{equation}\label{lm:intro}
8\pi\leq \lm_\ii\leq\min\left\{\frac{8\pi}{1-2\sg},\frac{4\pi}{\sg}\right\}, \quad 0\leq \sg\leq \frac12.
\end{equation}
Therefore we are bound to provide a refined description of the bubbling behavior in a situation where we lack quantization.
This is a rather delicate problem of independent analytical interest. On one side, it is well known that the moving plane method as in \cite{yy}
cannot be applied here, due to the fact that the weight $h_n$ has the ``wrong monotonicity" property. On the other side, the pointwise estimates in
\cite{bclt,bt2} crucially rely on the fact that {one already knows} the value of the mass associated to the blow up phenomenon.
As mentioned above, unfortunately this is not anymore the case in general for the situation at hand and we face different cases.
In this paper we will be concerned with the case where $\lm_n\to \lm_\ii<\frac{4\pi}{\sg}$ and $\sg\in (0,\frac12)$,
which implies $\lm_\ii<16\pi$ and, from the point of view of geometric singularities, $\al_{n,\sg}\to \al_{\ii,\sg}=\frac{\lm_\ii}{4\pi}\sg<1$.
We first obtain the value of the mass associated to the various allowed asymptotic behaviors. Toward this goal we have to discuss separately different asymptotic regimes and
use a recently derived ``sup+Cinf" estimate (see \cite{bcwyzHarnackInequality}), showing that, in the same spirit of \cite{ls}, under suitable decay conditions, there is no residual mass outside the bubble corresponding to a maximizing sequence. Remark that, if an inequality of ``sup+Cinf"-type holds, then blow up implies concentration, whence, due to the counterexamples in \cite{llty}, no inequality of this sort can be true in general as far as $\al_{\ii,\sg}>1$.
At this point, since $\lm_\ii<16\pi$,
we find that either $\lm_\ii=8\pi$, with two different asymptotic regimes (which are not radial around the singularity), or $\lm_\ii\in (8\pi,\frac{8\pi}{1-2\sg}]$ if $\sg\in(0,\frac14)$, or $\lm_\ii\in (8\pi, \frac{4\pi}{\sg})$ if $\sg\in[\frac14, \frac12)$ corresponding to the asymptotically radial (around the singularity \cite{GM1}) profiles described in \cite{llty}, see Theorem \ref{profile-new}. Remark that in the cases where $\lm_\ii=8\pi$
in fact we are exactly at the boundary of the equivalence region, see section \ref{sec6}. At this point, to solve the undeterminate form in the energy
$\mathcal{E}(\rho_n)-\mathcal{E}_{\sg,n}(\rho_n)$, we have to generalize known pointwise estimates (\cite{bclt,bt2,yy}) to cover
all the profiles allowed by this non standard bubbling behavior, which is done by a careful adaptation of the techniques in  \cite{bclt,bt2} and \cite{tar-sd}.
 The decay rate is controlled by the power $\frac{\lm_n}{4\pi}$ (see \eqref{profilev:H}, \eqref{profilev:H1}, \eqref{profilevtilde:H1-IIb})
and the analysis of the Pohozaev identity as in \cite{bclt} does not yield in general estimates strong enough to replace $\frac{\lm_n}{4\pi}$ with
$\frac{\lm_\ii}{4\pi}$. It is an interesting open problem to understand whether or not and possibly under which conditions
this replacement can be realized. The application of these estimates to the large energy limit is the subject of section \ref{sec6}.\\

If $\sg=\frac12$ the problem becomes more subtle since the energy takes the form,
$$\mathcal{E}(\rho_{n})-\mathcal{E}_{\frac12,n}(\rho_{n})=\frac12\ino \rho_nG[\rho_n]- \sg\ino\rho_n G_n(x)=\frac12 \ino \rho_n(G[\rho_n]- G_n(x))$$
and one may need higher order estimates. On the other side, if $\lm_\ii=\frac{4\pi}{\sg}$, $\sg\in [\frac14,\frac12]$, we have $\al_{\ii,\sg}=1$ and we miss the ``sup+Cinf" inequality recently derived in \cite{bcwyzHarnackInequality}. A possible solution of the problem
could be obtained via a suitable ``v(0)+Cinf v" inequality, in the same spirit of \cite{T1}, or either via mild boundary conditions as in \cite{bclt}
to exploit the original moving plane argument of \cite{bls}. Another interesting and subtle problem is that if $\lm_\ii=\frac{4\pi}{\sg}$ we
would have $\lm_\ii=16\pi$ as far as $\sg=\frac14$, which seems to yield a form of new and non standard multiple bubbling phenomenon.
We will discuss these problems in a forthcoming paper (\cite{BCYZZ2}).\\

This paper is organized as follows. In section \ref{sec2} we prove the minimal mass Lemma and its first consequences about the perturbed Liouville type equation \eqref{eqbase} below. In section \ref{sec2.1} we discuss the refined profile estimates concerning the concentration without quantization. 
In sections \ref{sec4} and 5 we prove the results about  Canonical and Microcanonical Variational Principles.
In section \ref{sec5} we illustrate an example of a domain of Type I. In section \ref{sec6} we discuss the high energy limit of solutions of \eqref{mf:n.intro} which blow up at the counter-rotating fixed vortex.

\bigskip

\section{Blow up analysis: minimal mass Lemma and first consequences}\label{sec2}
Here and in the rest of this paper we will often pass to subsequences which will not be relabelled and,
for the ease of the presentation, adopt the following notations:
\begin{equation*}
 \bns:= \tfrac{\lambda_n}{4\pi}\sigma, \quad \bis:= \tfrac{\lambda_\infty}{4\pi}\sigma,\quad \sigma\in\R,
\end{equation*}
\begin{equation}
 \label{H_n}
H_n(x)=h_{n}(x)^{2\bns}K_n(x),\quad
h_{n}(x)=\left({\epsilon_n^2+|x|^2}\right)^{\frac12}, \quad\epsilon_n\to0.
\end{equation}
Recall that,
\begin{align}
\nonumber
\lambda_{\sigma}=\begin{cases}
            \frac{8\pi}{1+2|\sigma|}, &\text{if}\,\,\sigma<0, \\
            8\pi, &\text{if}\,\,\sigma\geq 0.
            \end{cases}
\end{align}
Let us consider a sequence of solutions $v_n$ of the problem:
\begin{equation}
 \label{eqbase}
 \graf{
 -\D v_n=H_n e^{\dis v_n} \quad \text{in}\quad B_1,\\
 \hfill \\
 \int_{B_1}h_{n}^{2\bns} e^{\dis v_n}\leq C, }
\end{equation}
\begin{equation}\label{lambdan}
 \lambda_n=\int_{B_1}H_{n}e^{\dis v_n},
\end{equation}
and assume that $K_n$ satisfies,
\begin{equation}
 \label{K+}
 K_n\geq 0, \,\, K_n\in C^{0}(\overline B_1),
\end{equation}
\begin{equation}
 \label{Kconvergence1}
 K_n\to K \quad \text{uniformly in}\,\,\overline{B}_1.
\end{equation}

We assume that there exists a sequence of points $\{x_n\}\subset B_1$ such that
\begin{align}
\label{blowup:hyp}
 x_n\to 0 \,\,\,\,\text{and}&\,\,\,\, \underset{B_1}\sup\, v_n= v_n(x_n)\to +\infty, \,\,\text{as}\,\, n\to +\infty.
\end{align}
We will address some results concerning the blow up analysis of solutions of \eqref{eqbase}, \eqref{lambdan} satisfying \eqref{blowup:hyp} in the same
spirit of \cite{bt}, along the lines pursued in \cite{det} and \cite{llty}. The results we will discuss are obtained
by arguments similar to those adopted in  \cite{bt}, \cite{det} and \cite{llty} and it is likely that some of them could also be seen as consequences of
the more general theory in \cite{os}. Nevertheless, due the peculiarity of the blow up analysis we will purse and of the interplay of the
 parameters in \eqref{eqbase} we need to provide a detailed proof.  Among other things, we will see that, as expected from \cite{det}, \cite{llty}, \cite{os},
 a full generalization of the results in \cite{bt} can be recovered if $\sg<0$, while only partial results are available as far as $\sg>0$, see also Remark
 \ref{rem:llty} below.

\bigskip

\begin{lemma}[Minimal Mass Lemma]\label{minimalmasslemma}$\,$\\
Let $v_n$ be a sequence of solutions of \eqref{eqbase} and \eqref{lambdan} satisfying \eqref{blowup:hyp}, where $K_n$ and $K$ satisfy \eqref{K+}
and \eqref{Kconvergence1}. Assume furthermore that,
\begin{equation}\label{lambdantoinfty}
\lambda_n\to\lambda_\infty>0, \,\, \text{as} \,\, n\to\infty,
\end{equation}
that \eqref{lsg} is satisfied and
\begin{equation*}
 H_ne^{\dis v_n} \rightharpoonup \eta \qquad \text{weakly in the sense of measures in}\; B_1,
\end{equation*}
for some bounded Radon measure $\eta$, where we define,
$$
\beta:=\eta(\{0\}).
$$
Then,
\begin{equation}\label{Kzero}
K(0)>0
\end{equation}
and
\begin{align}
 \label{minimalmass}
\lm_\ii\geq \beta\geq\begin{cases}
            8\pi(1+\bis), &\text{if}\,\,\sigma<0, \\
            8\pi, &\text{if}\,\,\sigma\geq 0,
            \end{cases}
\end{align}
and in particular
\begin{equation}\label{mmbeta1}
\lm_\ii\geq \lm_{\sigma}.
\end{equation}

\end{lemma}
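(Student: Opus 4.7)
The plan is a standard blow-up/rescaling analysis near the concentration point $x_n$, in the spirit of \cite{bt,det,llty}, producing in the limit an entire solution of a Liouville-type equation on $\R^2$ whose mass is controlled via the Chen--Li or Prajapat--Tarantello classification and then used to bound $\beta$ from below.

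For \eqref{Kzero} I would argue by contradiction. If $K(0)=0$, then by the uniform convergence \eqref{Kconvergence1} one has $\sup_{B_r} K_n \to \sup_{B_r} K \to 0$ as $r\to 0$, and the a priori bound $\int_{B_1} h_n^{2\bns} e^{v_n}\leq C$ in \eqref{eqbase} gives $\int_{B_r(0)} H_n e^{v_n} \leq (\sup_{B_r} K_n)\, C$, which can be made arbitrarily small for large $n$ and small $r$. A singular Brezis--Merle type argument (whose threshold $4\pi(1+\bis)$ is positive thanks to $\bis>-1$, which is precisely \eqref{lsg} when $\sg<0$) then yields a uniform $L^\infty_{\rm loc}$ bound from above for $v_n$ in a neighborhood of $0$, contradicting $v_n(x_n)\to+\infty$ with $x_n\to 0$.

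For the lower bound on $\beta$ in \eqref{minimalmass}, I would rescale around the maximum point via
$$
\tau_n=\bigl(H_n(x_n)e^{v_n(x_n)}\bigr)^{-1/2},\qquad w_n(y)=v_n(x_n+\tau_n y)-v_n(x_n),
$$
so that $w_n(0)=0$, $w_n\leq 0$, and $-\Delta w_n(y)=\frac{H_n(x_n+\tau_n y)}{H_n(x_n)}e^{w_n(y)}$. Up to subsequences, $\tau_n/h_n(x_n)\to t\in[0,+\infty]$ and the analysis splits accordingly. In \emph{Regime (A)}, $t=0$: both $h_n(x_n+\tau_n y)/h_n(x_n)$ and $K_n(x_n+\tau_n y)/K_n(x_n)$ converge to $1$ locally uniformly (using $K(0)>0$), so $w_n\to w$ in $C^2_{\rm loc}(\R^2)$ with $-\Delta w=e^w$, $w(0)=0$, $w\leq 0$, and the Chen--Li classification gives $\int_{\R^2}e^w=8\pi$. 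The change of variables $x=x_n+\tau_n y$ over $B_{R\tau_n}(x_n)\subset B_r(0)$, letting first $n\to\infty$ and then $R\to\infty$, yields $\beta\geq 8\pi$, which covers \eqref{minimalmass} for $\sg\geq 0$ and a fortiori for $\sg<0$ (where $8\pi\geq 8\pi(1+\bis)$). In \emph{Regime (B)}, $t\in(0,+\infty]$, equivalently $M_n:=v_n(x_n)+2(1+\bns)\log h_n(x_n)$ stays bounded from above, and I would instead rescale at the singularity with scale $h_n(x_n)$:
$$
\tilde v_n(y)=v_n(h_n(x_n)y)+2(1+\bns)\log h_n(x_n),
$$
which satisfies $-\Delta\tilde v_n=(s_n^2+|y|^2)^{\bns} K_n(h_n(x_n)y)e^{\tilde v_n}$ with $s_n:=\epsilon_n/h_n(x_n)\in(0,1]$ converging to some $s\in[0,1]$, and whose maximum is attained at $y_n=x_n/h_n(x_n)$ with $|y_n|=\sqrt{1-s_n^2}$. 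Standard elliptic theory together with $K(0)>0$ produces a nontrivial limit $\tilde v$ on $\R^2$ solving $-\Delta\tilde v=(s^2+|y|^2)^{\bis}K(0)e^{\tilde v}$, and the analogous change-of-variables identity gives $\int_{\R^2}(s^2+|y|^2)^{\bis}K(0)e^{\tilde v}\leq\beta$. When $s=0$ the Prajapat--Tarantello classification identifies the left-hand side with $8\pi(1+\bis)$, exactly the bound in \eqref{minimalmass}. The inequality $\lm_\infty\geq\beta$ (from $\lm_n=\int_{B_1}H_n e^{v_n}\geq\int_{B_r}H_n e^{v_n}$) combined with the algebra recalled in the statement finally gives \eqref{mmbeta1}.

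The main obstacle I anticipate is Regime (B) with $s>0$, which can occur precisely when $|x_n|/\epsilon_n$ stays bounded; this is the novel regime produced by the regularization $h_n$. There the limit weight $(s^2+|y|^2)^{\bis}$ is smooth, no classical classification theorem applies directly, and the ``blow up without quantization'' phenomenon of \cite{det,llty} shows the total mass of entire solutions of the limit equation is not quantized but can take a full interval of values. The sharp lower bound must then be extracted by combining the radial asymptotic expansion $\tilde v(y)\sim -\frac{M}{2\pi}\log|y|$ at infinity, integrability of $(s^2+|y|^2)^{\bis}K(0)e^{\tilde v}$, and a Pohozaev-type identity adapted to the weighted equation, in the spirit of \cite{bt,det}. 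This subtlety is what motivates the authors' choice of writing an independent proof in place of a direct appeal to the general theory of \cite{os}.
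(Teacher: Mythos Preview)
Your overall strategy (rescale at the maximum, pass to an entire Liouville-type limit, read off the mass via classification) matches the paper's, and your argument for $K(0)>0$ is fine. However, your two-regime decomposition has a genuine gap.

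\textbf{The case $t=+\infty$ is not covered.} Recall that $\tau_n/h_n(x_n)\sim (\delta_n/h_n(x_n))^{1+\bns}$, so $t=+\infty$ is exactly the regime $h_n(x_n)/\delta_n\to 0$, i.e.\ both $\epsilon_n/\delta_n\to 0$ and $|x_n|/\delta_n\to 0$. In your Regime~(B) you rescale by $h_n(x_n)$ and assert that ``standard elliptic theory produces a nontrivial limit $\tilde v$''; but the maximum of $\tilde v_n$ equals $M_n=2(1+\bns)\log(h_n(x_n)/\delta_n)\to-\infty$, so $\tilde v_n\to-\infty$ uniformly on compacta and there is no nontrivial limit. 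Your Regime~(A) rescaling by $\tau_n$ fails here as well, since $h_n(x_n+\tau_n y)/h_n(x_n)\to+\infty$ for $y\neq 0$ and the rescaled weight does not converge to anything useful. The paper avoids this by rescaling instead by $\delta_n$ (its Case~(III)), which is the correct scale whenever $h_n(x_n)/\delta_n$ is bounded; in the sub-case $h_n(x_n)/\delta_n\to 0$ one gets $\epsilon_0=0$, $y_0=0$, weight $|y|^{2\bis}$, and then Prajapat--Tarantello gives mass $8\pi(1+\bis)$. The paper's trichotomy (based on $\epsilon_n/t_n$ with $t_n=\max\{|x_n|,\delta_n\}$, and then on $|x_n|/\delta_n$) is designed precisely so that the chosen scale always yields a nontrivial compact limit.

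\textbf{The $s>0$ sub-case.} You correctly flag this as the delicate point and point to a Pohozaev identity. This is exactly what the paper does: it invokes Lemma~\ref{massstrange} (from \cite{cl2,det}), whose proof is a Pohozaev computation yielding $\int_{\R^2}(t_0^2+|y|^2)^{\alpha}e^{\phi}\in[8\pi(1+\alpha),8\pi)$ when $\alpha<0$ and $\geq\max\{8\pi,4\pi(1+\alpha)\}$ when $\alpha\geq 0$. So your intuition is right; you simply have not executed it, whereas the paper offloads it to that lemma.
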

\begin{proof}
 Clearly, in view of \eqref{lambdan}, we have $\lm_\ii\geq \beta$ and, as far as we are concerned with the proof of the r.h.s. inequality in \eqref{minimalmass},
 it is enough to show that, for every $r\in(0,1)$,
\begin{align}
 \label{minimalmass2negative}
\underset{n\to\infty}\liminf \int_{B_r(0)}H_n e^{\dis v_n}\geq\begin{cases}
            8\pi(1+\bis), &\text{if}\,\,\sigma<0, \\
            8\pi, &\text{if}\,\,\sigma\geq 0.
            \end{cases}
\end{align}

Also, in view of \eqref{lambdan}, we see that \eqref{minimalmass} immediately implies \eqref{mmbeta1}. Therefore we restrict our
attention to the proof of \eqref{minimalmass2negative} where \eqref{lambdan} will not be needed anymore.\\
Let us fix $r\in(0,1)$ and, for $n$ large enough, by \eqref{blowup:hyp} assume without loss of generality that,
$$
\underset{\overline B_r}\sup\, v_n= v_n(x_n)\to +\ii \;\quad \mbox{and}\;\quad |x_n|\to 0.
$$
Here an in the rest of this proof we set
\begin{equation}\label{dtn}
     \delta_n^{2(1+\bns)}=e^{-\dis {v_n(x_n)}}\to 0,\,\, \text{as} \,\, n\to\infty,
\end{equation}
and
\[
 t_n=\max\{|x_n|,\delta_n\}\to 0,\,\, \text{as} \,\, n\to\infty.
\]

Here, we are naturally led to analyze three different cases:
\begin{itemize}
 \item Case \;(I): there exists a subsequence such that $\tfrac{\epsilon_n}{t_n}\to+\infty$, as $n\to+\infty$;
 \item Case (II): there exists a constant $C_1>0$ such that
\begin{equation}
 \label{Hypothesis:CaseII}
 \frac{\epsilon_n}{t_n}\leq C_1,\,\,\text{for all}\,\,n\in\N
\end{equation}
 and, possibly along a subsequence, $\tfrac{|x_n|}{\delta_n}\to+\infty$, as $n\to+\infty$;
 \item Case (III): there exist constants $C_1>0$ and $C_2>0$ such that
 \begin{equation}
 \label{Hypothesis:CaseIII}
 \frac{\epsilon_n}{t_n}\leq C_1,\quad \tfrac{|x_n|}{\delta_n}\leq C_2,\,\,\text{for all}\,\,n\in\N.
\end{equation}
\end{itemize}
We start with Case (I) and notice that,
\begin{equation}
 \label{epsilondelta&epsilonxn}
 \frac{\epsilon_n}{\delta_n}\to+\infty\qquad\text{and}\qquad \frac{\epsilon_n}{|x_n|}\to+\infty.
\end{equation}
Let us define
\begin{equation}\label{bn1}
B^{(n)}=B_{\frac{r}{\epsilon_n}}(0),
\end{equation}
and
\begin{equation}\label{wn:IA}
 w_n(z)=v_n(\epsilon_n z)+2(1+\bns)\log\epsilon_n\quad\text{in}\quad B^{(n)},
\end{equation}
which satisfies
\begin{align*}
\begin{cases}
-\D w_n=V_n(z)e^{\dis w_n} \quad\text{in}\quad B^{(n)},\\
V_n(z)=\left({1+|z|^2}\right)^{\bns} K_n(\epsilon_n z)\to
K(0)\left({1+|z|^2}\right)^{\bis}\;\mbox{in}\;C^t_{loc}(\R^2),\\
\int\limits_{B^{(n)}}\left({1+|z|^2}\right)^{\bns}e^{\dis w_n}\leq C,\\
w_n(\tfrac{x_n}{\epsilon_n})=2(1+\bns)\log(\tfrac{\epsilon_n}{\delta_n})\to +\infty.
\end{cases}
\end{align*}
In particular, we notice that $\tfrac{x_n}{\epsilon_n}\to 0$ and also that,
\[
 \int_{B_{R}(0)}e^{\dis w_n}\leq \tilde C,
\]
for any $R>0$ and some constant $\tilde C>0$. Thus, by an immediate application of the concentration-compactness alternative
for regular Liouville-type equations
on $B_{R}(0)$ (see \cite{bm} and \cite{ls}), we see that $K(0)>0$ and
\[
 \underset{n\to\infty}\liminf \int_{B_{R}(0)}\left({1+|z|^2}\right)^{\bns}K_n(\epsilon_n z)e^{\dis w_n}=8m\pi,
\]
where $m\in\N^+$, so that, for any $r\in (0,1)$, we have that
\[
 \underset{n\to\infty}\liminf \int_{B_r(0)}H_n e^{\dis v_n}=
 \underset{n\to\infty}\liminf \int_{B^{(n)}}\left({1+|z|^2}\right)^{\bns}K_n(\epsilon_n z)e^{\dis w_n} \geq
\]
\[
 \geq \underset{n\to\infty}\liminf \int_{B_{R}(0)}\left({1+|z|^2}\right)^{\bns}K_n(\epsilon_n z)e^{\dis w_n}=8\pi m,
\]

which is \eqref{minimalmass2negative}, thereby concluding the analysis of Case (I).
\medskip

Next we consider Case (II) and  notice that in this situation we have that $\delta_n\leq|x_n|$, for $n$ large enough, whence
\begin{equation}
 \label{epsilonxn}
 \frac{\epsilon_n}{|x_n|}\leq C,\,\,\text{for all}\,\,n\in\N,
\end{equation}
for some $C>0$ constant. Thus, besides the fact that \[
 \frac{|x_n|}{\delta_n}\to+\infty,\,\,\text{as}\,\,n\to+\infty,
\]
possibly along a subsequence,
we also have that,
\[
 \frac{\epsilon_n}{|x_n|}\to\bar \epsilon_0\geq  0,\quad \frac{x_n}{|x_n|}\to \bar z_0,\;\;|\bar z_0|=1,\quad \text{as}\,\,n\to+\infty.
\]
 In this situation we define,
\begin{equation}\label{dn1}
D^{(n)}=B_{\frac{r}{|x_{n}|}}(0),
\end{equation}
\begin{equation}\label{barwn:IB}
 \bar w_{n}(z)=v_n(|x_{n}| z)+2(1+\bns)\log(|x_{n}|)\quad\text{in}\quad D^{(n)},
\end{equation}
which satisfies
\begin{align*}
\begin{cases}
-\D \bar w_{n}=\bar V_{n}(z)e^{\dis \bar w_{n}} \quad\text{in}\quad D^{(n)},\\
\bar V_{n}(z)=\left({\frac{\epsilon_n^2}{|x_{n}|^2}+|z|^2}\right)^{\bns}K_n(|x_{n}| z)\to K(0)(\bar \epsilon_0^2+|z|^2)^{2\bis}\;\mbox{in}\;C^t_{loc}(\R^2),\\
\int\limits_{D^{(n)}}\left({\frac{\epsilon_n^2}{|x_{n}|^2}+|z|^2}\right)^{\bns} e^{\dis \bar w_n}\leq C,\\
\bar w_{n}(\tfrac{x_{n}}{|x_{n}|})=2(1+\bns)\log(\tfrac{|x_{n}|}{\delta_{n}})\to +\infty,
\end{cases}
\end{align*}
 Again, we notice that,
\[
 \int_{B_{R}(\bar z_0)}e^{\dis \bar w_n}\leq \bar C,
\]
for some $R>0$ small enough and some constant $\bar C>0$. Therefore, as an immediate application of the concentration-compactness
alternative for regular Liouville-type equations on $B_{R}(\bar z_0)$ (see \cite{bm} and \cite{ls}), we see that $K(0)>0$ and
\[
 \underset{n\to\infty}\liminf \int_{B_{R}(\bar z_0)}
 \left({\frac{\epsilon_n^2}{|x_n|^2}+|z|^2}\right)^{\bns}K_{n}(|x_n|z)e^{\dis \bar w_n}=8\pi m,
\]
where $m\in\N^+$ and $K(0)>0$. As a consequence we have that,
\[
 \underset{n\to\infty}\liminf \underset{B_r(0)}\int H_n e^{\dis v_n}= \underset{n\to\infty}\liminf \underset{D^{(n)}}
 \int \left({\frac{\epsilon_n^2}{|x_n|^2}+|z|^2}\right)^{\bns}K_{n}(|x_n|z)e^{\dis \bar w_n}
\]
\[\geq\underset{n\to\infty}\liminf \int_{B_{R}(0)} \left({\frac{\epsilon_n^2}{|x_n|^2}+|z|^2}\right)^{\bns}K_{n}(|x_n|z)
e^{\dis \bar w_n}=8\pi m.
\]
This is \eqref{minimalmass2negative}, which concludes the study of Case (II).

\medskip

At last, we examine Case (III). We notice that \eqref{Hypothesis:CaseIII} implies that,
\begin{equation}
 \label{epsilondeltan}
 \frac{\epsilon_n}{\delta_n}\leq C,\,\,\text{for all}\,\,n\in\N,
\end{equation}
for some $C>0$ constant. Indeed we have that,
\begin{align*}
 \frac{\epsilon_n}{\delta_n}=\frac{\epsilon_n}{t_n}\frac{t_n}{|x_n|}\frac{|x_n|}{\delta_n}=\begin{cases}
         \frac{\epsilon_n}{t_n},&\quad\text{if}\,\,\delta_n\geq|x_n|,\\
         \frac{\epsilon_n}{t_n}\frac{|x_n|}{\delta_n}, &\quad\text{if}\,\,\delta_n\leq|x_n|,                                                                                    \end{cases}
\end{align*}
immediately implying that, $\frac{\epsilon_n}{\delta_n}\leq\max\{{C_1},{C_1C_2}\}$, for every $n\in\N$.\\
Let us define
$$
D_n=B_{\frac{r}{\delta_n}}(0),
$$
and possibly along a subsequence, assume without loss of generality that,
\begin{equation}
 \label{x0}
 \frac{x_n}{\delta_n}\to y_0, \,\,\text{as}\,\,n\to+\infty
\end{equation}
 and
\begin{equation}
 \label{epsilon_0}
 \frac{\epsilon_n}{\delta_n}\to \epsilon_0, \,\,\text{as}\,\,n\to+\infty,
\end{equation}
for some $y_0\in\R^2$ and $\epsilon_0\geq 0$. At this point, we define,
\begin{equation}
 \label{phicaseIA}
 \widetilde w_n(y)=v_n(\delta_n y)+2(1+\bns)\log \delta_n=v_n(\delta_n y)-v_n(x_n)\quad\text{in}\,\, D_n,
\end{equation}
which satisfies,
\begin{align}\label{equationgoodcase}
\begin{cases}
 -\D \widetilde w_n=\left({\frac{\epsilon_n^2}{\delta_n^2}+\left|y\right|^2}\right)^{\bns}
 K_n(\delta_n y)e^{\dis \widetilde w_n}:=f_n \qquad\text{in}\,\,D_n, \\
\int_{D_n}\left({\frac{\epsilon_n^2}{\delta_n^2}+|y|^2}\right)^{\bns}e^{\dis \widetilde w_n}\leq C,\\
\widetilde w_n(y)\leq\widetilde w_n(\tfrac{x_n}{\delta_n})=\max\limits_{D_n}\widetilde w_n=0.
\end{cases}
\end{align}
Remark that $f_n$ is uniformly bounded in $L^{p}(B_R)$, for $R$ large enough where $p=p(|\sigma|)>1$, if $\sigma<0$, while $p=\infty$, if $\sigma\geq0$.
Moreover we see that, by the Harnack inequality (see for example Lemma 1 in \cite{bt}), for every $R\geq 1+ 2|y_0|$ there exists a constant $C_R>0$ such that
\begin{equation}
 \label{Estimateboundary}
  \sup_{\p B_{R}}|\widetilde w_n|\leq C_R.
\end{equation}
Indeed, since $f_n$ is uniformly bounded in $L^p(B_{2R})$, for some $p>1$, and
$$\sup_{\p B_{2R}}\widetilde w_n\leq \sup_{ B_{4R}}\widetilde w_n =\widetilde w_n(\tfrac{x_n}{\delta_n})=0,$$
then there exist $\tau\in(0,1)$ and $C_0>0$, which does not depend by $n$, such that
\[
 \sup_{ B_{R}} \widetilde w_n\leq \tau\inf_{B_{R}}\widetilde w_n +C_0
\]
and this implies that
\[
 \inf_{\p B_{R}}\widetilde w_n=\inf_{ B_{R}}\widetilde w_n\geq -\tau^{-1}C_0,
\]
where we used the fact $\sup\limits_{ B_{R}} \widetilde w_n=0$, for $n$ large enough, and the superharmonicity of $\widetilde w_n$.\\
Therefore, by \eqref{equationgoodcase}, \eqref{Estimateboundary} and standard elliptic estimates, we conclude that $\widetilde w_n$ is uniformly
bounded in $C^{t}_{\rm loc}(\R^2)$, for some $t\in(0,1)$ and then, possibly along a subsequence, $\widetilde w_n\to \widetilde w$
uniformly on compact sets of $\R^2$, where $\widetilde w$ satisfies
\begin{align*}
\begin{cases}
 -\D \widetilde w=(\epsilon_0^2+|y|^2)^{\bis}K(0)e^{\dis \widetilde w} \quad\text{in}\quad\R^2, \\
\int_{\R^2}(\epsilon_0^2+|y|^2)^{\bis}e^{\dis \widetilde w}\leq C.
\end{cases}
\end{align*}
Since $\widetilde w$ is bounded from above, then necessarily $K(0)\neq0$. At this point, by well known results (\cite{cl2,det}, see Lemma \ref{massstrange} below), we have that either,

\begin{align*}
\underset{\R^2}\int(\epsilon_0^2+|y|^2)^{\bis}K(0)e^{\dis \widetilde w}\geq 8\pi(1+\bis), \quad&\text{if}\quad \sigma<0,
\end{align*}
or,
\begin{align*}
\underset{\R^2}\int(\epsilon_0^2+|y|^2)^{\bis}K(0)e^{\dis \widetilde w}> \max\{8\pi,4\pi(1+\bis)\}\geq 8\pi, \quad &\text{if}\quad \sigma\geq 0.
\end{align*}
Therefore we have,
\[
 \underset{n\to\infty}\liminf \underset{B_r(0)}\int H_n e^{\dis v_n}=
 \underset{n\to\infty}\liminf \underset{D_n}\int \left({\frac{\epsilon_n^2}{\delta_n^2}+\left|y\right|^2}\right)^{\bns}
 K_n(\delta_n y)e^{\dis \widetilde w_n}\geq\underset{\R^2}\int (\epsilon_0^2+|y|^2)^{\bis}K(0)e^{\dis \widetilde w}
 \]
which implies that in this case we have either,
\begin{align*}
\underset{n\to\infty}\liminf \underset{B_r(0)}\int H_n e^{\dis v_n}\geq 8\pi(1+\bis), \quad&\text{if}\quad \sigma<0,
\end{align*}
or
\begin{align*}
\underset{n\to\infty}\liminf \underset{B_r(0)}\int H_n e^{\dis v_n}> \max\{8\pi,4\pi(1+\bis)\}\geq8\pi, \quad &\text{if}\quad \sigma\geq 0.
\end{align*}
At this point it is readily seen that \eqref{minimalmass2negative} holds in Case (III) as well, which concludes the proof.
\end{proof}

\bigskip

Our next aim is to show that, under suitable conditions, if $p=0$ is the unique blow up point for $v_n$ in $B_1$, that is,
\[
 \text{for any}\,\,r\in(0,1),\exists \, C_r>0, \,\,\text{such that}:
\]
\begin{equation}
    \label{bounded}
   \underset{\overline{B_1\backslash B_r}}{\max}\, v_n\leq C_r,
\end{equation}
\begin{equation}
    \label{explosion}
    \underset{\overline{B_r}}\max\, v_n\to\infty,
\end{equation}
then $v_n$ concentrates in the following sense: there exists a subsequence, which we will not relabel, such that, as $n\to\infty$, we have
\begin{align}
 \label{-infinity}
 &v_{n}\to-\infty,\qquad\qquad \text{as}\,\,n\to\infty,\,\text{uniformly on compact sets of}\,\,B_1\backslash\{0\},
\\
 \label{concentration0}
 &H_{n}e^{\dis v_{n}}\rightharpoonup \beta\delta_{p=0}, \quad\text{weakly in the sense of measures in}\,\, B_1,\\
 \label{mmbeta}
 & \text{where}\quad \lm_\ii\geq \beta\geq\begin{cases}
            8\pi(1+\bis), &\text{if}\,\,\sigma<0, \\
            8\pi, &\text{if}\,\,\sigma\geq 0.
            \end{cases}
\end{align}

The proof of the following result uses the same arguments as that in Theorem 4 of \cite{bt}, we provide the details just for reader's convenience. The result also follows from the general theory in \cite{os} (see Theorem 2.1 in \cite{os}).

\begin{theorem}[Concentration]\label{Concentration}\hfill\\
Let $v_n$ be a sequence of solutions of \eqref{eqbase} and \eqref{lambdan}, where $K_n$ satisfies \eqref{K+} and
\begin{equation}
 \label{Kconvergence2}
 K_n\to K \quad \text{uniformly in}\,\,\overline{B}_1\,\,\text{and in} \,\, C^1_{loc}(B_1).
\end{equation}
Assume furthermore that  \eqref{lambdantoinfty} is satisfied
and that \eqref{bounded}, \eqref{explosion} holds true. Then \eqref{Kzero} and \eqref{mmbeta1} hold true and if either
\begin{equation}\label{conditionmixed:neg}
\sigma<0 \,\,\,\,\,\,\text{and}\,\,\,\,\,\,\, \lambda_\infty<\tfrac{4\pi}{|\sigma|},
\end{equation}
 or if
\begin{equation}
 \label{conditionmixed}
 \sigma>0\,\,\,\,\,\,\text{and}\,\,\,\,\,\,\, \lambda_\infty\leq\tfrac{4\pi}{\sigma},
\end{equation}
then necessarily $\sg\leq \frac12$ and there exists a subsequence of $v_n$ such that \eqref{-infinity}, \eqref{concentration0} and \eqref{mmbeta} are
satisfied.\\
Otherwise, if for $\sg>0$ the assumption about $\lm_\ii$ in \eqref{conditionmixed} is not satisfied, then we have that $\lm_\ii\geq 8\pi$ and,
either  \eqref{-infinity}, \eqref{concentration0} and \eqref{mmbeta} are
satisfied or,
for some $0<\ov{r}<1$ and $\dt\in (0,1)$, and for any $0<r_0\leq \ov{r}$ we have that,
\begin{align}
 \label{vn:bdd}
 &v_{n}\to\xi ,\hspace{3.4cm}\mbox{in}\;C^{1,\delta}_{\rm loc}(B_{\bar r}(0)\backslash\{0\}),
\\
 \label{vn:concentration0}
 &H_{n}e^{\dis v_{n}}\rightharpoonup f_\ii(x) +\beta\delta_{p=0}, \quad
 \text{weakly in the sense of measures in}\,\, B_1 \,\,  \\
 &\hspace{4.7cm} \mbox{and in}\,\, C^{0,\delta}_{\rm loc}(B_{\bar r}(0)\backslash\{0\}),\nonumber\\
 \label{vn:bdd:mmbeta}
 & f_\ii(x)=|x|^{2\bis}K e^{\dis \xi}\in L^1(B_{\ov{r}}),\quad \xi(x)=-\frac{\beta}{2\pi}\log(|x|)+\phi(x)+\gamma(x), \\
 & \label{vn:bdd:mmbeta2}\text{where }8\pi\leq \beta<4\pi+\lm_\ii\sg,\quad \int\limits_{B_{\ov{r}}}f_\ii=\lm_\ii-\beta,\\
 &\text{for some smooth function $\gamma$ in $B_{r_0}$ and }\phi(x)=-\tfrac{1}{2\pi}\int_{B_{r_0}}\ln|x-y|f_\ii(y)\nonumber.
\end{align}
\end{theorem}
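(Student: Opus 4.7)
My plan is to apply Lemma \ref{minimalmasslemma} directly to $v_n$. Hypothesis \eqref{explosion} supplies $x_n\to 0$ with $v_n(x_n)\to +\infty$, so \eqref{blowup:hyp} is satisfied, and the measures $H_n e^{v_n}$ are uniformly bounded in total variation by \eqref{lambdan}--\eqref{lambdantoinfty}. Extracting a weak-$*$ subsequential limit $\eta$ and setting $\beta:=\eta(\{0\})$, Lemma \ref{minimalmasslemma} at once produces \eqref{Kzero} and the lower bound $\beta\ge 8\pi(1+\bis)$ if $\sigma<0$, respectively $\beta\ge 8\pi$ if $\sigma\ge 0$, hence \eqref{mmbeta1}.

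Next I would run a concentration--compactness dichotomy on $B_1\setminus \{0\}$. Because of \eqref{bounded}, $0$ is the only blow up point, and on each annulus $B_1\setminus\overline{B_r}$ the weight $H_n$ is uniformly bounded in $L^\infty$ by \eqref{H_n}--\eqref{Kconvergence2}. The Brezis--Merle type alternative of \cite{bm}, \cite{ls} then furnishes, modulo a subsequence and a diagonal argument as $r\to 0^+$, one of two mutually exclusive regimes on $B_1\setminus \{0\}$: either (b) $v_n\to -\infty$ locally uniformly, or (a) $v_n$ is uniformly bounded in $L^\infty_{loc}(B_1\setminus \{0\})$. In regime (b), $H_n e^{v_n}\to 0$ locally uniformly away from $0$, so the weak limit is supported at $\{0\}$, $\beta=\lm_\ii$, and \eqref{-infinity}, \eqref{concentration0}, \eqref{mmbeta} follow.

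The delicate step is regime (a). Using \eqref{Kconvergence2} together with standard elliptic estimates, I would extract a subsequence converging in $C^{1,\dt}_{loc}(B_{\ov r}\setminus \{0\})$ to a function $\xi$ solving $-\Delta \xi=|x|^{2\bis}K e^\xi$ in $B_{\ov r}\setminus \{0\}$, with $H_n e^{v_n}\rightharpoonup f_\ii+\beta\delta_0$, $f_\ii=|x|^{2\bis}K e^\xi\in L^1$, and $\int_{B_{\ov r}}f_\ii=\lm_\ii-\beta$. Viewing $\xi$ as a distributional solution of $-\Delta \xi=f_\ii+\beta\delta_0$ on $B_{r_0}$ produces the representation in \eqref{vn:bdd:mmbeta}, with $\phi$ the Newtonian potential of $f_\ii$ and $\gamma$ harmonic, hence smooth. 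The crucial ingredient is the integrability of $f_\ii$ at $0$: since $e^\xi\sim |x|^{-\beta/(2\pi)}$ up to bounded factors, one necessarily has $2\bis-\beta/(2\pi)>-2$, i.e.\ $\beta < 4\pi+\lm_\ii\sigma$. I expect the main obstacle to lie precisely in rigorously deriving this exponent bound while controlling the residual mass on small annuli around $0$; this is where the $C^1_{loc}$ hypothesis on $K_n$ enters, together with the Harnack-type arguments of \cite{bt} adapted to the weighted setting \eqref{H_n}.

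Coupling the upper bound $\beta<4\pi+\lm_\ii\sigma$ available only in regime (a) with the lower bound from Lemma \ref{minimalmasslemma} rules out regime (a) exactly under \eqref{conditionmixed:neg} or \eqref{conditionmixed}. Indeed, for $\sigma<0$, $8\pi(1+\bis)\le\beta<4\pi+\lm_\ii\sigma$ forces $\lm_\ii|\sigma|>4\pi$, contradicting \eqref{conditionmixed:neg}; for $\sigma>0$, $8\pi\le\beta<4\pi+\lm_\ii\sigma$ forces $\lm_\ii\sigma>4\pi$, contradicting \eqref{conditionmixed}. Under either hypothesis only regime (b) survives, delivering \eqref{-infinity}--\eqref{mmbeta}; the constraint $\sigma\le\tfrac12$ then follows from $8\pi\le\lm_\ii\le 4\pi/\sigma$ when $\sigma>0$ and is trivial when $\sigma<0$. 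When instead $\sigma>0$ and $\lm_\ii>4\pi/\sigma$, the bound $\beta\ge 8\pi$ is compatible with $\beta<4\pi+\lm_\ii\sigma$, so both regimes may occur, producing either the concentration alternative or the blow up without concentration profile \eqref{vn:bdd}--\eqref{vn:bdd:mmbeta2}.
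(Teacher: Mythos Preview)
Your proposal is correct and follows essentially the same route as the paper: apply the minimal mass Lemma \ref{minimalmasslemma} for \eqref{Kzero}, \eqref{mmbeta1}, and the lower bound on $\beta$; set up a bounded/$-\infty$ dichotomy on $B_1\setminus\{0\}$; in the bounded alternative, pass to the limit $\xi$ via elliptic estimates, use Green's representation to obtain $\xi=-\tfrac{\beta}{2\pi}\log|x|+\phi+\gamma$, and read off $\beta<4\pi(1+\bis)$ from $f_\infty\in L^1$; then pit this against the lower bound from Lemma \ref{minimalmasslemma} to exclude the bounded alternative under \eqref{conditionmixed:neg} or \eqref{conditionmixed}. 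The paper phrases the dichotomy slightly differently---it argues by contradiction that $\min_{|x|=r}v_n\to-\infty$ for every $r$, using the maximum principle (superharmonicity of $v_n$ together with \eqref{bounded}) rather than invoking the Brezis--Merle alternative plus a diagonal argument---but the two are equivalent and the core mechanism is identical.

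One small correction: the $C^1_{loc}$ hypothesis on $K_n$ in \eqref{Kconvergence2} is \emph{not} used in this step; uniform convergence suffices for the Green representation and the integrability argument. The $C^1$ regularity enters only later, in the Pohozaev analysis of Theorem \ref{massboundarycontrol}. Also, for $\sigma<0$ the contradiction in the bounded regime is immediate from $8\pi(1+\bis)\le\beta<4\pi(1+\bis)$ with $1+\bis>0$, without needing to unwind it into $\lambda_\infty|\sigma|>4\pi$.
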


\begin{proof}
Clearly, by the (minimal mass) Lemma \ref{minimalmasslemma}, we have that $K(0)>0$ (which is \eqref{Kzero}), \eqref{mmbeta1} is satisfied and in particular, putting
$\beta=\eta(\{0\})$, we have that \eqref{mmbeta} holds true as well. Therefore, as far as $\sg>0$, from \eqref{mmbeta} and \eqref{conditionmixed}
we have that,
$$
8\pi\leq \lm_\ii\leq \tfrac{4\pi}{\sigma}
$$
i.e. $\sg\leq \frac12$.\\
Next, assuming either \eqref{conditionmixed:neg} or \eqref{conditionmixed}, we will prove the following:\\
{\bf CLAIM}: for every $r\in(0,1)$, possibly along a subsequence, we have
\begin{equation}\label{vconc}
 \underset{|x|=r}\min\,v_n\to-\infty,\qquad\text{as}\,\,n\to\infty.
\end{equation}
{\it Proof of Claim.} We argue by contradiction and assume that there exists $\bar r\in (0,1)$ and a constant $C>0$ such that
$$
 \underset{|x|=\bar r}\min\,v_n\geq C,\qquad\text{for every}\,\,n\in\N.
$$
By the maximum principle and \eqref{bounded}, $v_n$ is uniformly bounded in $L^\infty_{\rm loc}(B_{\bar r}(0)\backslash\{0\})$, whence, possibly along a subsequence, we may assume that,
\begin{equation*}
 v_n\to\xi\,\,\text{pointwise a.e. and in $C^{1,\delta}_{\rm loc}(B_{\bar r}(0)\backslash\{0\})$, for some $\delta\in(0,1)$,}
\end{equation*}
\begin{equation*}
 \big(\epsilon_n^2+|x|^2\big)^{\bns}K_n e^{\dis v_n}\to |x|^{2\bis}K e^{\dis \xi}, \,\,\text{in $C^{0}_{\rm loc}(B_{\bar r}(0)\backslash\{0\})$}
\end{equation*}
By Fatou's lemma, $|x|^{2\bis}K e^{\dis \xi}\in L^1(B_{\bar r})$, whence we have that,
\begin{equation*}
 \big(\epsilon_n^2+|x|^2\big)^{\bns}K_n e^{\dis v_n}\rightharpoonup \eta=|x|^{2\bis}K e^{\dis \xi}+\beta\delta_{p=0},
\end{equation*}
weakly in the sense of measure in $B_{\bar r}(0)$, where, again by  Lemma \ref{minimalmasslemma}, $\beta$ satisfies \eqref{mmbeta}, where
obviously $\lm_\ii\geq \beta$. Moreover, since
$K(0)>0$, we conclude that,
\begin{equation*}
 \big(\epsilon_n^2+|x|^2\big)^{\bns}e^{\dis v_n}\rightharpoonup |x|^{2\bis}e^{\dis \xi}+\tfrac{\beta}{K(0)}\delta_{p=0},
\end{equation*}
weakly in the sense of measure in $B_{\bar r}(0)$. Let us fix $0<r_0\leq \bar r$ and let us denote $B_0=B_{r_0}(0)$ and
\begin{equation*}
 \varphi_n(x)=\big(\epsilon_n^2+|x|^2\big)^{\bns}K_n e^{\dis v_n}\quad \text{and}\quad \varphi(x)=|x|^{2\bis}K e^{\dis \xi}.
\end{equation*}

By using Green's representation formula for $v_n$ in $B_0$ and passing to the limit as $n\to\infty$, we deduce that
\begin{equation*}
 \xi(x)=\phi(x)-\tfrac{\beta}{2\pi}\ln|x|+\gamma(x),
\end{equation*}
with
\begin{equation*}
 \phi(x)=-\tfrac{1}{2\pi}\int_{B_0}\ln|x-y|\varphi(y)\,dy
\end{equation*}
and
\begin{equation*}
 \gamma(x)=-\tfrac{1}{2\pi}\int_{\p B_0}\xi(y)\tfrac{(x-y)\cdot \nu}{|x-y|}\,dy +\tfrac{1}{2\pi}\int_{\p B_0}\tfrac{\p\xi(y)}{\p\nu}\ln|x-y|\,dy.
\end{equation*}
We notice that $\gamma\in C^1(B_r(0))$, for every $r\in(0,r_0)$, $\phi\in L^{\infty}(B_0)$ and
\begin{equation*}
 \varphi(x)=|x|^{\frac{\sigma\lambda_\infty-\beta}{2\pi}}Ke^{\dis \phi(x)+\gamma(x)}.
\end{equation*}
Since $\varphi\in L^1(B_0)$ we also deduce that,
\begin{equation}
 \label{conditionintegrability}
 \beta<4\pi(1+\bis)=4\pi+\lm_\ii \sg.
\end{equation}
At this point, if $\sigma<0$ then \eqref{conditionintegrability} immediately contradicts \eqref{mmbeta}. Otherwise \eqref{conditionmixed} holds true and,
in the same time, by \eqref{mmbeta} we have $\beta\geq 8\pi$. Thus, in this particular case, by using \eqref{conditionintegrability}, we  find that
$\lm_\ii> \frac{4\pi}{\sg}$, which contradicts \eqref{conditionmixed}. This fact concludes the proof of the claim, whence \eqref{vconc} is satisfied.\finedim

\bigskip
\bigskip

At this point, for every compact set $U\Subset B_1\backslash\{0\}$, there exist two radii $0<r_1<r_2<1$ such that $U\subset B_{r_2}\backslash B_{r_1}$ and
then, by the Harnack inequality (see for example Lemma 1 in \cite{bt}), we see that,
\[
 \underset{U}\sup\, v_n\leq\underset{B_{r_2}\backslash B_{r_1}}\sup\,v_n\leq C_0
 \underset{B_{r_2}\backslash B_{r_1}}\inf\,v_n+ C_1=C_0\underset{\p(B_{r_2}\backslash B_{r_1})}\inf\,v_n + C_1,
\]
where the last equality follows from the fact that $v_n$ is superharmonic. Therefore \eqref{-infinity} holds true as well,
implying that $\eta$ is supported at zero. In particular, along a subsequence we have,
\[
H_{n}e^{\dis v_n}\rightharpoonup \eta=\beta\delta_{p=0},
\]
weakly in the sense of measure in $B_1$, where obviously \eqref{minimalmass} implies that $\beta$ satisfies \eqref{mmbeta}, as claimed.\\

Therefore we are left with the case $\sg>0$ where the condition about $\lm_\ii$ in \eqref{conditionmixed} is not satisfied, i.e. $\lm_\ii>\frac{4\pi}{\sg}$.
In this situation there are only two possibilities left: either the claim is true and then again as above we have that \eqref{-infinity} holds true as well,
implying that $\eta$ is supported at zero and we are back to the previous case, or the claim is false, that is we can assume that there exists $\bar r\in (0,1)$ and a constant $C>0$ such that
$$
 \underset{|x|=\bar r}\min\,v_n\geq C,\qquad\text{for every}\,\,n\in\N.
$$
Thus we can just follow the argument in the proof of the claim, to deduce that \eqref{vn:bdd}, \eqref{vn:concentration0} and \eqref{vn:bdd:mmbeta} hold
true as well. Concerning \eqref{vn:bdd:mmbeta2} just remark that, in view of \eqref{bounded}-\eqref{explosion} we have, as $n\to +\ii$,
$$
\lm_n=\int\limits_{B_{1}}H_{n}e^{\dis v_{n}}=o(1)+\int\limits_{B_{\ov{r}}}H_{n}e^{\dis v_{n}}=
o(1)+\int\limits_{B_{\ov{r}}\setminus B_\eps}f_\ii+\int\limits_{B_\eps}H_{n}e^{\dis v_{n}}=
$$
$$
o(1)+\int\limits_{B_{\ov{r}}\setminus B_\eps}f_\ii+\beta +\int\limits_{B_\eps}f_\ii,
$$
so that, by using \eqref{lambdantoinfty}, the conclusion follows just taking the limit as $n\to +\ii$ and $\eps\to 0^+$.
\end{proof}

\medskip

\begin{remark}\label{rem:llty}
As expected from the examples of blow up without concentration in \cite{llty}, it is not true in general that blow up sequences of
\eqref{eqbase} ``concentrate" in the sense of \cite{bm}, \cite{bt}.
This is why, as far as $\sg>0$, the assumption \eqref{conditionmixed} is needed in Theorem \ref{Concentration}. However, as expected
from \cite{det}, \cite{llty} and \cite{os}, as far as $\sg<0$, indeed blow up implies concentration. Actually, even under much stronger assumptions,
this problem shows up again, causing the phenomenon of blow up-concentration without quantization,
see Theorem \ref{massboundarycontrol} below for more details concerning this point.
\end{remark}

As a consequence of Theorem \ref{Concentration}, in the same spirit of \cite{bt} as later refined in \cite{os}, we have the following concentration-compactness alternative:
\begin{theorem}[Concentration-Compactness]\label{Concentration-Compactness alternative}\hfill \\
Let $v_n$ be a sequence of solutions of \eqref{eqbase}, \eqref{lambdan}, \eqref{lambdantoinfty} where $K_n$ satisfies \eqref{K+} and \eqref{Kconvergence2}.
Assume furthermore that either \eqref{conditionmixed:neg} or \eqref{conditionmixed} holds true.
Then, possibly along subsequence, one of the following alternatives is satisfied:
\begin{itemize}
 \item[(i)] $\underset{B_1}\sup\, \big|v_{n}\big|\leq C_\O$, for every $\O\Subset B_1$;
 \item[(ii)] $\underset{B_1}\sup\, v_{n}\to-\infty$, for every $\O\Subset B_1$;
 \item[(iii)] there exists a finite and nonempty set $S=\{q_1,\ldots,q_m\}\subset B_1$, $m\in\N$, and sequences of points
 $\{x_n^1\}_{n\in\N},\ldots,\{x_n^m\}_{n\in\N}\subset B_1$, such that $x_n^i\to q_i$ and $v_{n}(x_n^i)\to\infty$,
 for every $i\in\{1,\ldots,m\}$. Moreover,
 $\underset{B_1}\sup\, v_{n}\to-\infty$
 on any compact set $\O\subset B_1\backslash S$ and $H_{n}e^{\dis v_{n}}\rightharpoonup \sum_{i=1}^m\beta_i\delta_{q_i}$
weakly in the sense of measures in $B_1$, with $K(q_i)>0$, $\beta_i\in 8\pi\N$ if $q_i\neq0$ while if $q_i=0$ for some $i\in\{1,\ldots,m\}$ then
$\beta_i$ satisfies \eqref{mmbeta}. In particular $\lm_\ii\geq \lm_\sg m$.
\end{itemize}
Otherwise, if $\sg>0$ and the assumption about $\lm_\ii$ in \eqref{conditionmixed} is not satisfied, then either one of the alternatives
{\rm (i),(ii),(iii)} holds true or else
\begin{itemize}
 \item[(iv)] the blow up set coincides with the singular point $q=q_1=0$, $K(q)>0$ and \eqref{vn:bdd}, \eqref{vn:concentration0},
 \eqref{vn:bdd:mmbeta} and  \eqref{vn:bdd:mmbeta2} hold true.
\end{itemize}
\end{theorem}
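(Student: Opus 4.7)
The plan is to execute a classical concentration--compactness argument in the spirit of \cite{bm}, \cite{ls}, \cite{bt}, using the (minimal mass) Lemma \ref{minimalmasslemma} to show finiteness of the blow-up set and Theorem \ref{Concentration} as the local description at each blow-up point. I would first define the blow-up set
\[
S=\{x\in B_1:\exists\,x_n\to x\text{ with }v_n(x_n)\to+\infty\}.
\]
For each $q\in S$ I would pick $\rho>0$ so that $B_\rho(q)\cap S=\{q\}$, and bound the mass concentrated at $q$ from below by Lemma \ref{minimalmasslemma} (when $q=0$) or by the classical Brezis--Merle/Li--Shafrir estimate on a regular ball (when $q\neq 0$), obtaining in either case at least $\lm_\sg$ units of mass. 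Summing over $S$ and using the uniform bound \eqref{lambdantoinfty} yields $|S|\leq\lm_\ii/\lm_\sg<+\infty$, hence $S$ is finite and $\lm_\ii\geq\lm_\sg\,m$, as asserted in (iii).

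If $S=\emptyset$, then $v_n$ is locally bounded above in $B_1$ and $-\D v_n=H_n e^{v_n}$ is uniformly bounded in $L^p_{\rm loc}(B_1)$ for some $p>1$ (the integrability condition in \eqref{conditionmixed:neg} ensuring $\bns>-1$ in the singular regime). The Harnack-type inequality (Lemma 1 of \cite{bt}) together with standard elliptic regularity then produces the familiar dichotomy: either $v_n$ is equibounded in $L^\infty_{\rm loc}(B_1)$, giving alternative (i), or $\sup_\Omega v_n\to-\infty$ on every $\Omega\Subset B_1$, giving alternative (ii).

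When $S=\{q_1,\ldots,q_m\}\neq\emptyset$, for each $q_i$ I would pick $r_i>0$ with $B_{r_i}(q_i)\cap S=\{q_i\}$ and describe the behavior locally. If $q_i\neq 0$, the weight $H_n(x)=(\eps_n^2+|x|^2)^{\bns}K_n(x)$ is smooth and bounded away from $0$ and $\infty$ on $B_{r_i}(q_i)$, so the classical theory applies and yields $K(q_i)>0$, $\beta_i\in 8\pi\N^+$, together with $v_n\to-\infty$ on compact subsets of $B_{r_i}(q_i)\setminus\{q_i\}$. If $q_i=0$ and either \eqref{conditionmixed:neg} or \eqref{conditionmixed} holds, I would apply Theorem \ref{Concentration} on $B_{r_i}(0)$ to obtain $K(0)>0$, concentration with mass $\beta_i$ satisfying \eqref{mmbeta}, and the same collapse to $-\infty$ outside the origin. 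A further Harnack argument on $B_1\setminus\bigcup_i B_{r_i/2}(q_i)$ propagates the decay, showing $\sup_\Omega v_n\to-\infty$ on every compact $\Omega\subset B_1\setminus S$ and $H_n e^{v_n}\rightharpoonup\sum_{i=1}^m\beta_i\delta_{q_i}$, which is alternative (iii).

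The main obstacle, and the only genuinely new step, is the regime $\sg>0$, $\lm_\ii>4\pi/\sg$, where Theorem \ref{Concentration} at $q_i=0$ admits the non-standard profile \eqref{vn:bdd}--\eqref{vn:bdd:mmbeta2} in addition to pure concentration. The task is to show that when this non-standard behavior occurs at $0$, necessarily $S=\{0\}$, so that alternative (iv) is reached with no other coexisting blow-up point. My plan is to exploit the mass-accounting identity $\int_{B_{\ov r}}f_\ii=\lm_\ii-\beta$ in \eqref{vn:bdd:mmbeta2}: combined with the $C^{1,\dt}$ convergence $v_n\to\xi$ on $\partial B_{\ov r}$ and a Harnack argument on $B_1\setminus B_{\ov r}(0)$, one deduces that $v_n$ is equibounded below outside $B_{\ov r}$, so that no further Dirac contribution can emerge. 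Indeed, any additional concentration at some $q_j\neq 0$ would bring in at least $8\pi$ units of mass, contradicting the saturation $\lm_\ii=\beta+\int_{B_{\ov r}}f_\ii$ already forced by \eqref{vn:concentration0}--\eqref{vn:bdd:mmbeta2}. Ruling out this hybrid configuration yields alternative (iv) and closes the proof.
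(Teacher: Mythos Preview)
Your approach is the same as the paper's: the classical \cite{bt}/\cite{bm}/\cite{ls} machinery, with Lemma~\ref{minimalmasslemma} and Theorem~\ref{Concentration} as the local ingredients at the singular point. Two imprecisions are worth correcting.

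First, the claim ``in either case at least $\lm_\sg$ units of mass'' is not quite right at $q=0$ when $\sg<0$: the minimal mass there is $8\pi(1+\bis)=8\pi+2\sg\lm_\ii$, which depends on $\lm_\ii$ and need not exceed $\lm_\sg$. The paper instead sums the exact lower bounds---$8\pi$ at regular points, $8\pi(1+\bis)$ at the origin---to get $\lm_\ii\geq 8\pi(m-1)+8\pi+2\sg\lm_\ii$, and then solves this linear inequality for $\lm_\ii$ to obtain $\lm_\ii\geq\lm_\sg m$.

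Second, your argument for alternative (iv) has a circularity. The identity $\int_{B_{\ov r}}f_\ii=\lm_\ii-\beta$ in \eqref{vn:bdd:mmbeta2} is proved in Theorem~\ref{Concentration} under the hypothesis that $0$ is the \emph{unique} blow-up point in the ball (see \eqref{bounded}--\eqref{explosion}); applied on a small ball $B_{r_0}(0)$ isolating the origin, it only yields $\lim_n\int_{B_{r_0}}H_ne^{v_n}=\beta+\int_{B_{\ov r}}f_\ii$, which does not preclude additional mass elsewhere. Likewise, you cannot run Harnack on $B_1\setminus B_{\ov r}(0)$ without first controlling the right-hand side there, which presupposes the absence of other blow-up points. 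The correct contradiction argument runs the other way: assume some $q_j\in S\setminus\{0\}$; the regular theory at $q_j$ forces $v_n\to-\infty$ on $\partial B_\rho(q_j)$, while the non-concentration alternative at $0$ gives $v_n$ bounded on $\partial B_{\ov r}(0)$. On the compact connected set between these circles, contained in $B_1\setminus S$, the right-hand side is bounded in $L^p$, so the Harnack inequality of Lemma~1 in \cite{bt} gives $\sup v_n\leq\tau\inf v_n+C$ with $\tau\in(0,1)$, which is impossible since $\sup$ stays bounded while $\inf\to-\infty$.
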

\begin{proof}
The proof of Theorem \ref{Concentration-Compactness alternative} follows from Lemma \ref{minimalmasslemma} and
Theorem \ref{Concentration} exactly by the same argument adopted in \cite{bt} which is why we omit it here. The unique difference
 is about
the use of \eqref{lambdan} in case $q_i=0$ for some $i\in\{1,\ldots,m\}$, in which case, as far as $\sg<0$, we have,
$$
\lm_\ii \geq 8\pi (m-1)+ 8\pi + 2\lm_\ii \sg,
$$
immediately implying that $\lm_\ii\geq \lm_\sg m$.
\end{proof}

\bigskip

\bigskip

By arguing as in  \cite{bt}, we refine the estimate about the value of the mass relative to the blow up at singular sources whenever
we have a mild control about the oscillation of $v_n$ at the boundary, as first pushed forward in \cite{yy}.

\begin{theorem}[Concentration without quantization] \label{massboundarycontrol} \hfill \\
Under the assumptions of Theorem \ref{Concentration} about $\vn$, $K_n$ and $\lm_n$, suppose in addition that $v_n$ satisfies,
 \begin{equation}
  \label{oscillation}
  \underset{\p B_1}\max\,v_n-\underset{\p B_1}\min\,v_n\leq C.
 \end{equation}
 Then
\begin{equation}\label{beqlm}
 \beta\equiv \lm_\ii
\end{equation}
and, either $\sg<0$ and \eqref{conditionmixed:neg} is satisfied, in which case we have that,
\begin{equation}
 \label{quantized:neg}
8\pi+2\sg\lm_\ii=8\pi(1+\bis)\leq  \beta \leq 8\pi
\end{equation}
and in particular
\begin{equation}\label{quantized:neglm}
\frac{8\pi}{1+2|\sg|}\leq \lm_\ii\leq 8\pi
\end{equation}
or $\sg>0$, that is \eqref{conditionmixed} is satisfied, and then we have,
\begin{equation}\label{sg:nec}
0<\sg\leq \frac12\,,
\end{equation}
\begin{equation}
 \label{quantized:pos}
8\pi\leq  \beta \leq 8\pi(1+\bis)=8\pi+2\sg\lm_\ii
\end{equation}
and in particular
\begin{equation}\label{quantized:poslm}
8\pi\leq \lm_\ii\leq\min\left\{\frac{8\pi}{1-2\sg},\frac{4\pi}{\sg}\right\}.
\end{equation}
\end{theorem}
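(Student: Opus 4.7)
The plan is to combine three ingredients: the concentration alternative in Theorem \ref{Concentration} (so that $H_n e^{v_n}\rightharpoonup\beta\,\delta_{0}$ in $B_1$ and $v_n\to-\infty$ on compacts of $B_1\setminus\{0\}$), the oscillation hypothesis \eqref{oscillation}, which rules out loss of mass near $\partial B_1$ and upgrades the weak convergence to a statement about the total mass $\lambda_n$, and a local Pohozaev identity applied to $v_n$ on $B_r(0)$, whose limit as $n\to\infty$ followed by $r\to 0^+$ produces the algebraic identity
\[
\beta(\beta-8\pi)\;=\;8\pi\,\alpha_{\infty,\sigma}\,J_\infty,\qquad J_\infty\in[0,\beta].
\]
The two-sided bounds \eqref{quantized:neg} and \eqref{quantized:pos} then follow at once from the sign and absolute value of the right-hand side.

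I would first establish \eqref{beqlm}. Decompose $v_n=w_n+\chi_n$, where $w_n(x):=\int_{B_1}G_{B_1}(x,y)H_n(y)e^{v_n(y)}\,dy$ is the Green potential on $B_1$ and $\chi_n:=v_n-w_n$ is harmonic with $\chi_n|_{\partial B_1}=v_n|_{\partial B_1}$. By the maximum principle, the oscillation of $\chi_n$ on $\overline{B_1}$ equals that of $v_n|_{\partial B_1}$, which by \eqref{oscillation} is uniformly bounded. Since Theorem \ref{Concentration} gives $v_n\to-\infty$ on any compact of $B_1\setminus\{0\}$, while $w_n\to\beta\,G_{B_1}(\cdot,0)$ pointwise there by \eqref{concentration0}, writing $\chi_n=m_n+(\chi_n-m_n)$ with $m_n:=\min_{\partial B_1}v_n$ forces $m_n\to-\infty$; thanks to \eqref{oscillation}, also $\max_{\partial B_1}v_n\to-\infty$, whence $v_n\to-\infty$ uniformly on $\overline{B_1}\setminus B_\rho$ for every $\rho>0$. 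As $H_n$ is uniformly bounded away from the origin, $\int_{B_1\setminus B_\rho}H_n e^{v_n}\to 0$, and \eqref{lambdantoinfty} then gives $\beta=\lambda_\infty$.

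The core step is the Pohozaev identity. Multiplying $-\Delta v_n=H_n e^{v_n}$ by $x\cdot\nabla v_n$ and integrating on $B_r$ yields
\[
\frac{r}{2}\int_{\partial B_r}\!\bigl[|\nabla_T v_n|^2-(\partial_\nu v_n)^2\bigr]d\sigma
= r\!\int_{\partial B_r}\! H_n e^{v_n}d\sigma-2\!\int_{B_r}\! H_n e^{v_n}dx-\!\int_{B_r}(x\cdot\nabla H_n)e^{v_n}dx.
\]
Using $x\cdot\nabla H_n=h_n^{2\alpha_{n,\sigma}}(x\cdot\nabla K_n)+2\alpha_{n,\sigma}\frac{|x|^2}{\epsilon_n^2+|x|^2}H_n$, together with $K_n\to K$ in $C^1_{\rm loc}$ and $x\cdot\nabla K_n=O(|x|)$ at $0$, the right-hand side converges as $n\to\infty$ (with $r>0$ fixed) to $-2\beta-2\alpha_{\infty,\sigma}J_\infty$, where
\[
J_\infty:=\lim_{n\to\infty}\int_{B_r}H_n e^{v_n}\,\frac{|x|^2}{\epsilon_n^2+|x|^2}\,dx\in[0,\beta];
\]
the upper bound is immediate from $\frac{|x|^2}{\epsilon_n^2+|x|^2}\le 1$, while $J_\infty$ is independent of $r>0$ as all mass concentrates at $0$. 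On the left, $\nabla v_n=\nabla w_n+\nabla\chi_n$ converges in $C^0(\partial B_r)$ to $\nabla V_\infty$, where $V_\infty=\beta\,G_{B_1}(\cdot,0)+\chi_\infty$ solves $-\Delta V_\infty=\beta\,\delta_0$ with harmonic regular part $\rho$. Writing $V_\infty=-\frac{\beta}{2\pi}\log|x|+\rho(x)$ and using harmonicity of $\rho$ to kill the cross term (via $\int_{\partial B_r}\partial_\nu\rho=\int_{B_r}\Delta\rho=0$), the left-hand side converges to $-\beta^2/(4\pi)$ as $r\to 0^+$.

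Equating the two limits produces $\beta(\beta-8\pi)=8\pi\,\alpha_{\infty,\sigma}J_\infty$. Since $0\le J_\infty\le\beta$, the sign of $\beta-8\pi$ agrees with that of $\alpha_{\infty,\sigma}$ and $|\beta-8\pi|\le 8\pi|\alpha_{\infty,\sigma}|$. For $\sigma<0$ (so $\alpha_{\infty,\sigma}<0$), this reads $8\pi(1+\alpha_{\infty,\sigma})\le\beta\le 8\pi$, i.e.\ \eqref{quantized:neg}; substituting $\alpha_{\infty,\sigma}=\lambda_\infty\sigma/(4\pi)$ and $\beta=\lambda_\infty$ yields \eqref{quantized:neglm}. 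The case $\sigma>0$ is symmetric, producing \eqref{quantized:pos}, and combined with \eqref{conditionmixed} also \eqref{quantized:poslm}; finally the lower bound $\beta\ge 8\pi$ together with $\lambda_\infty\le 4\pi/\sigma$ forces $\sigma\le 1/2$, which is \eqref{sg:nec}. The main obstacle is the rigorous $r\to 0^+$ passage in the boundary integrals of the Pohozaev identity: the harmonicity of the regular part of $V_\infty$ must be exploited carefully to cancel all potentially singular $O(1/r)$ cross terms, so that only the pure singular contribution $-\beta^2/(4\pi)$ survives and yields the decisive $\beta^2$ term in the final algebraic identity.
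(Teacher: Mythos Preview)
Your proof is correct and follows essentially the same strategy as the paper's: both derive the bounds on $\beta$ from a local Pohozaev identity on $B_r$, where the key undetermined term comes from $x\cdot\nabla(h_n^{2\alpha_{n,\sigma}})=2\alpha_{n,\sigma}\frac{|x|^2}{\epsilon_n^2+|x|^2}\,h_n^{2\alpha_{n,\sigma}}$, and both identify the left-hand side limit as $-\beta^2/(4\pi)$ via the logarithmic singularity plus a harmonic regular part.

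The differences are only cosmetic. The paper first subtracts the harmonic function $s_n$ with boundary values $v_n-\min_{\partial B_1}v_n$ and works with $w_n:=v_n-\min_{\partial B_1}v_n-s_n$, which has zero boundary data; you instead keep $v_n$ and carry the harmonic part $\chi_n$ through the limit. The paper also tracks the complementary fraction $\frac{\epsilon_n^2}{\epsilon_n^2+|x|^2}$ (bounded between $0$ and $1$) rather than your $\frac{|x|^2}{\epsilon_n^2+|x|^2}$; since these sum to $1$, the resulting two-sided bounds are identical. Finally, the paper works with $\liminf/\limsup$ of the singular term rather than asserting existence of your $J_\infty$; strictly speaking the limit need not exist a priori, but since $\beta=\lambda_\infty$ is already determined, your identity $\beta(\beta-8\pi)=8\pi\alpha_{\infty,\sigma}J_\infty$ in fact forces every subsequential limit of $J_n$ to coincide, so this is not a gap.
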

\begin{proof}\hfill \\
Under the assumptions of Theorem \ref{Concentration} we obviously have that \eqref{bounded}, \eqref{explosion} and then also
\eqref{concentration0}, \eqref{mmbeta}, \eqref{sg:nec} are satisfied. In particular it is not difficult to see that,
$$
\lm_\ii=\lim\limits_{n\to +\ii}\int\limits_{B_1} K_n e^{\dis u_n}=\beta,
$$
which is just \eqref{beqlm}. As a consequence, as soon as we prove \eqref{quantized:neg}, since \eqref{conditionmixed:neg} is satisfied by assumption,
we readily verify that \eqref{quantized:neglm} is satisfied as well. On the other side, if \eqref{quantized:pos} holds true, then we have
$$
\lm_\ii=\beta\leq 8\pi +2\sg \lm_\ii,
$$
immediately implying that, since \eqref{conditionmixed} is satisfied by assumption, then \eqref{quantized:poslm} holds true as well.\\

Therefore in the rest of this proof we will be just concerned with \eqref{quantized:neg}, \eqref{quantized:pos}.

\medskip

Let us define $s_n$ to be the unique solution of,
\begin{equation*}
 \begin{cases}
  -\D s_n=0\quad &\text{in}\,\,B_1,\\
  s_n=v_n-\underset{\p B_1}\min\,v_n\quad &\text{on}\,\,\p B_1.
 \end{cases}
\end{equation*}
By standard elliptic estimates, we have that
\begin{equation*}
 \|s_n\|_\infty\leq C,
\end{equation*}
for a suitable positive constant $C$, and, along a subsequence, we may assume that $s_n\to s$ in $C^1_{loc}(B_1)$. Now, let us consider the function
\begin{equation*}
 w_{n}(x)=v_n(x)-\underset{\p B_1}\min\,v_n-s_n(x),
\end{equation*}
which satisfies,
\begin{equation*}
 \begin{cases}
  -\D w_n=W_n e^{w_n}\qquad \text{in}\,\, B_1, \\
  \int_{B_1}W_ne^{w_n}\leq C,\\
  w_n=0, \qquad \text{on}\,\,\p B_1,
 \end{cases}
\end{equation*}
where
\begin{equation*}
 W_n(x)=H_n(x)e^{\gamma_n(x)}
\end{equation*}
and
\begin{equation*}
 \gamma_n(x)=s_n(x)+\underset{\p B_1}\min \,v_n.
\end{equation*}
We notice that,
\begin{equation*}
 \nabla\gamma_n\to\nabla \gamma\,\,\text{and}\,\, \nabla K_n\to\nabla K, \,\,\text{uniformly on compact sets of $B_1$},
\end{equation*}
with $\gamma=s$. Moreover, since $W_n(x)e^{\dis w_n}=H_ne^{\dis v_n}$, by Theorem \ref{Concentration}, we have that
\begin{equation}
 \label{vanishing}
 W_ne^{\dis w_n}\to0\qquad \text{uniformly on compact sets of $B_1\backslash\{0\}$}
\end{equation}
and
\begin{equation}
 \label{concentrationwn}
 W_ne^{\dis w_n}\rightharpoonup\beta\delta_{p=0}\qquad \text{weakly in the sense of measure in $B_1$},
\end{equation}
where $\beta$ satisfies \eqref{mmbeta}. Moreover,
\begin{equation}
 \label{concentration2}
 h_{n}^{2\bns}e^{\dis \gamma_n}e^{\dis w_n}\rightharpoonup\tfrac{\beta}{K(0)}\delta_{p=0}\qquad \text{weakly in the sense of measure in $B_1$}.
\end{equation}
Let us set $f_n(x)=W_n(x)e^{\dis w_n(x)}$, then by Green's representation formula,
\begin{equation*}
 w_n(x)=-\tfrac{1}{2\pi}\int_{B_1}\ln (|x-y| )f_n(y)\,dy+\int_{B_1}R(x,y)f_n(y)\,dy,
\end{equation*}
where $R(x,y)$ is the regular part of Green's function associated to the Laplacian operator with Dirichlet boundary conditions on $B_1$. Then, passing to the limit in the previous identity, we deduce that
\begin{equation}
 \label{wnconverges}
 w_n(x)\to -\tfrac{\beta}{2\pi}\ln|x|+ g(x)\quad \text{in} \,\,C^1_{\rm loc}(B_1\backslash\{0\}),
\end{equation}
for some $g\in C^1(B_1)$. Let us set,
\[
 w_0(x)= -\tfrac{\beta}{2\pi}\ln|x|+ g(x),
\]
we use the well known Pohozaev identity,
\begin{align}
 \nonumber
 \int_{\p B_r(0)}\Big((x,\nu)\tfrac{\dis |\grad w_n|^2}{2}-&(\nu,\grad w_n)(x,\grad w_n)\Big)d\sigma= \\ \label{Pohozaevnwn}
 &=\int_{\p B_r(0)}(x,\nu)W_n e^{\dis w_n}d\sigma-\int_{B_r}(2W_n+x\cdot\grad W_n)e^{\dis w_n}\,dx.
\end{align}

First of all, observe that
\begin{align*}
 &-\int_{B_r}x\cdot\grad W_ne^{\dis w_n}\,dx= \\
 &=-\int_{B_r}x\cdot\grad (K_ne^{ \gamma_n})h_{n}^{2\bns}e^{\dis w_n}\,dx-
 2\bns\int_{B_r}|x|^2\big(\epsilon_n^2+|x|^2\big)^{\bns-1}K_ne^{ \gamma_n}e^{\dis w_n}\,dx\\
 &=-\int_{B_r}\tfrac{x\cdot\grad (K_ne^{ \gamma_n})}{K_ne^{ \gamma_n}}W_ne^{\dis w_n}\,dx-2\bns\int_{B_r}W_ne^{\dis w_n}\,dx+
 2\bns\int_{B_r}\epsilon_n^2\big(\epsilon_n^2+|x|^2\big)^{\bns-1}K_ne^{\dis \gamma_n}e^{\dis w_n}\,dx \\
 &=-\int_{B_r}\tfrac{x\cdot\grad (K_ne^{ \gamma_n})}{K_ne^{ \gamma_n}}W_ne^{\dis w_n}\,dx-2\bns\int_{B_r}W_ne^{\dis w_n}\,dx+
 2\bns\int_{B_r}\tfrac{\epsilon_n^2}{\epsilon_n^2+|x|^2}W_ne^{\dis w_n}\,dx.
\end{align*}

At this point, we split the discussion in two cases.\\ \\
CASE $\sg<0$.\\
In this case, in view of \eqref{concentrationwn}, we have that,
\begin{align*}
2\beta \bis \leq &\liminf_{n\to\infty}\left(2\bns\int_{B_r}\tfrac{\epsilon_n^2}{\epsilon_n^2+|x|^2}W_ne^{\dis w_n}\,dx\right)\leq \\
 &\limsup_{n\to\infty}\left(2\bns\int_{B_r}\tfrac{\epsilon_n^2}{\epsilon_n^2+|x|^2}W_ne^{\dis w_n}\,dx\right)\leq 0,
\end{align*}
and then, by using also \eqref{vanishing}, \eqref{concentration2}, \eqref{wnconverges},
we can pass to the limit in \eqref{Pohozaevnwn} as $n\to\infty$ and deduce that, for any $r\in(0,1)$,
\begin{align*}
 -2\beta \leq \int_{\p B_r(0)}\Big((x,\nu)\tfrac{|\grad w_0|^2}{2}-&(\nu,\grad w_0)(x,\grad w_0)\Big)d\sigma
 \leq -2\beta(1+\bis),
\end{align*}
that is,
\begin{equation*}
 -2\beta\leq \tfrac{r}{2}\int_{\p B_r}|\grad w_0|^2 -r\int_{\p B_r}(\nu,\grad w_0)^2\leq-2\beta(1+\bis),
\end{equation*}
which implies that, as $r\to0^+$,
\[
 -2\beta\leq -\tfrac{\beta^2}{4\pi}+o(1)\leq-2\beta(1+\bis).
\]
By letting $r\to0^+$, we deduce that
\[
 8\pi\geq \beta\geq 8\pi(1+\bis),
\]
which is just \eqref{quantized:neg}.

\bigskip

CASE $\sg>0$, that is \eqref{conditionmixed} hods true.\\
In this case, in view of \eqref{concentrationwn}, we have that,
\begin{align*}
0 \leq &\liminf_{n\to\infty}\left(2\bns\int_{B_r}\tfrac{\epsilon_n^2}{\epsilon_n^2+|x|^2}W_ne^{w_n}\,dx\right)\leq \\
 &\limsup_{n\to\infty}\left(2\bns\int_{B_r}\tfrac{\epsilon_n^2}{\epsilon_n^2+|x|^2}W_ne^{w_n}\,dx\right)\leq 2\beta \bis,
\end{align*}
whence the same argument adopted in case $\sg<0$ proves that,
\[
 8\pi\leq \beta\leq 8\pi(1+\bis),
\]
which is \eqref{quantized:pos}.
\end{proof}

\bigskip

We conclude this section with some results about the case where $h_{n}^{2\bns}$ is replaced by the the homogenous term $|x|^{2\al_{n,\sg}}$.
In this case, by arguing as in \cite{bt}, it is easy to see that
the assumption \eqref{lambdan} does not affect the concentration-compactness alternative, the overall result being just
that of a modification of the values of the masses in the quantization phenomenon. Some of the needed arguments have been already
worked out above, whence we just report without proof the corresponding results. Let us consider a sequence of solutions of,
\begin{equation}
 \label{eqbasedelta}
 \graf{
 -\D v_n=\left(\prod\limits_{j=1}^N |x-p_j|^{2\al_{n,\sg_j}}\right) K_n e^{\dis v_n}\quad \text{in}\quad  B_1,\\
 -1<\al_{n,\sg_j}=\frac{\lm_n}{4\pi}\sg_j\neq 0,\quad\al_{n,\sg_j}\to \al_{\ii,\sg_j}=\frac{\lm_\ii}{4\pi}\sg_j\notin\{-1,0\},\quad j\in\{1,\ldots,N\},\\
 \int\limits_{B_1}\left(\prod\limits_{j=1}^N |x-p_j|^{2\al_{n,\sg_j}}\right) e^{\dis v_n}\leq C,\\
 \lm_n=\int\limits_{B_1}\left(\prod\limits_{j=1}^N |x-p_j|^{2\al_{n,\sg_j}}\right) K_n e^{\dis v_n}. }
 \end{equation}

\medskip

\begin{theorem}[Concentration-Compactness alternative]\label{Concentration-Compactness alternative0}\hfill \\
Let $v_n$ be a sequence of solutions of \eqref{eqbasedelta}. Suppose that $\lambda_n\to\lambda_\infty>0$, $K_n$ satisfies \eqref{K+} and
\eqref{Kconvergence2}.
Then, possibly along a subsequence, one of the following alternatives is satisfied:
\begin{itemize}
 \item[(i)] $\underset{B_1}\sup\, \big|v_{n}\big|\leq C_\O$, for every $\O\Subset B_1$;
 \item[(ii)] $\underset{B_1}\sup\, v_{n}\to-\infty$, for every $\O\Subset B_1$;
 \item[(iii)] there exists a finite and nonempty set $S=\{q_1,\ldots,q_m\}\subset B_1$, $m\in\N$, and sequences of points
 $\{x_n^1\}_{n\in\N},\ldots,\{x_n^m\}_{n\in\N}\subset B_1$, such that $x_n^i\to q_i$ and $v_{n}(x_n^i)\to\infty$,
 for every $i\in\{1,\ldots,m\}$. Moreover, $\underset{B_1}\sup\, v_{n}\to-\infty$
 on any compact set $\O\subset B_1\backslash S$ and
 $\left(\prod\limits_{j=1}^N |x-p_j|^{2\al_{n,\sg_j}}\right)K_{n}e^{v_{n}}\rightharpoonup \sum_{i=1}^m\beta_i\delta_{q_i}$
weakly in the sense of measure in $B_1$, with $\beta_i\in 8\pi\N$ if $q_i\notin \{p_1,\ldots,p_N\}$ while if $q_i=p_j$ for some $i\in\{1,\ldots,m\}$,
$j\in\{1,\ldots,N\}$ then either $\sg_j<0$ and then $\beta_i\geq 8\pi(1+\alpha_{\ii,\sigma_j})$ or $\sg_j>0$ and then
$\beta_j\geq 8\pi$. Let $J_{-}\subset\{1,\ldots,N\}$ be the set of indices of singular blow up points with $\sg_j<0$, then in particular
$$
\lm_\ii\geq \tfrac{8\pi}{1+2\sum\limits_{j\in J_{-}}^{\,}|\sigma_j|} m.
$$

\end{itemize}
\end{theorem}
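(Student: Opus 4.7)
The plan is to obtain Theorem \ref{Concentration-Compactness alternative0} as a direct adaptation of the arguments used for Lemma \ref{minimalmasslemma}, Theorem \ref{Concentration} and Theorem \ref{Concentration-Compactness alternative}, the key simplification being that the coefficients $|x-p_j|^{2\alpha_{n,\sigma_j}}$ are pure homogeneous weights, not regularized via any $\epsilon_n$. Consequently Cases (I) and (II) of the proof of Lemma \ref{minimalmasslemma} disappear: only the Case (III) rescaling is relevant near each singular point $p_j$, and the rescaled limit is an entire solution of a standard singular Liouville equation on $\R^2$.

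First, I would establish the trichotomy. Away from $\{p_1,\ldots,p_N\}$ the coefficients of the equation are continuous, so the classical Brezis--Merle--Li--Shafrir analysis (\cite{bm},\cite{ls}) yields, modulo subsequences, one of (i), (ii), or concentration at a finite set of regular blow-up points with masses in $8\pi\N$. At a potential singular blow-up point $q_i=p_j$, I would perform the scaling analogous to \eqref{phicaseIA}, namely set $\delta_n^{2(1+\alpha_{n,\sigma_j})}=e^{-v_n(x_n)}$ with $x_n\to p_j$ a local maximizer of $v_n$, and pass to the limit to obtain an entire solution $\tilde w$ of
\[
 -\Delta \tilde w = |y|^{2\alpha_{\infty,\sigma_j}} K(p_j) e^{\tilde w} \quad\text{in } \R^2, \qquad \int_{\R^2}|y|^{2\alpha_{\infty,\sigma_j}}e^{\tilde w}<+\infty.
\]
The Chen--Li classification recalled in Lemma \ref{massstrange} then yields the local mass lower bound $\beta_i\geq 8\pi(1+\alpha_{\infty,\sigma_j})$ if $\sigma_j<0$ and $\beta_i\geq\max\{8\pi,4\pi(1+\alpha_{\infty,\sigma_j})\}\geq 8\pi$ if $\sigma_j>0$.

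Second, to rule out residual absolutely continuous mass (and thus exclude a scenario of type (iv) from Theorem \ref{Concentration-Compactness alternative}), I would mimic the contradiction argument inside the proof of Theorem \ref{Concentration}: if $\min_{|x-q_i|=\bar r}v_n$ were bounded below for some $\bar r>0$, then by Harnack's inequality and Green's representation, $v_n$ would converge in $C^{1,\delta}_{\rm loc}$ away from $S$ to a function whose integrable singular profile at $p_j$ forces $\beta_i<4\pi(1+\alpha_{\infty,\sigma_j})$. This contradicts the lower bound when $\sigma_j<0$ directly. When $\sigma_j>0$, the analogous contradiction is reached by exploiting that the weight here is exactly $|x-p_j|^{2\alpha_{n,\sigma_j}}$ and no $\epsilon_n$-driven smoothing is present, so that the limit problem on $\R^2$ is the classical singular Liouville equation: the Chen--Li classification forces the bubble to be centered at $p_j$ whenever $x_n\to p_j$, preventing the off-center ``blow-up without concentration'' phenomenon of \cite{llty}. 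Therefore $v_n\to-\infty$ uniformly on compact subsets of $B_1\setminus S$ and the weak-$*$ limit is supported in $S$, completing alternative (iii).

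Finally, for the summation bound I would combine $\lm_n\to\lm_\ii$ with the identity $\lm_\ii=\sum_{i=1}^m\beta_i$, which follows from the support of the limit being $S$. Denote by $J_-^\star\subset J_-$ the indices for which $p_j\in S$; then $\beta_i\geq 8\pi$ at all regular or positive-singular blow-up points, while $\beta_i\geq 8\pi+2\lm_\ii\sigma_j=8\pi-2\lm_\ii|\sigma_j|$ at $q_i=p_j$ with $j\in J_-^\star$. Summation gives
\[
 \lm_\ii=\sum_{i=1}^m\beta_i\;\geq\; 8\pi m -2\lm_\ii\sum_{j\in J_-^\star}|\sigma_j|,
\]
whence $\lm_\ii\geq 8\pi m / (1+2\sum_{j\in J_-^\star}|\sigma_j|)\geq 8\pi m / (1+2\sum_{j\in J_-}|\sigma_j|)$, as claimed. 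The step requiring the most care is the exclusion of a residual absolutely continuous part in the limit measure near a positive-singular blow-up point, where, in contrast to Theorem \ref{Concentration-Compactness alternative}, no hypothesis of the form \eqref{conditionmixed} is available; nevertheless the pure homogeneity of the weight $|x-p_j|^{2\alpha_{n,\sigma_j}}$ makes the limit equation on $\R^2$ the classical singular Liouville equation, whose solutions are explicitly classified, and the residual-mass scenario can be directly excluded.
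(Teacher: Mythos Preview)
The paper itself does not give a self-contained proof of this theorem; it refers the reader to \cite{bt}, noting that the arguments already worked out for Lemma \ref{minimalmasslemma} and Theorems \ref{Concentration}--\ref{Concentration-Compactness alternative} adapt directly. Your plan follows this route and is mostly on target, and your derivation of the final summation bound on $\lambda_\infty$ is correct. However, your assertion that ``Cases (I) and (II) of the proof of Lemma \ref{minimalmasslemma} disappear'' is inaccurate: setting $\epsilon_n=0$ in \eqref{H_n} eliminates only Case (I). Case (II), namely $|x_n|/\delta_n\to+\infty$, survives in the pure homogeneous setting --- in fact it is the natural regime when $\sigma_j>0$, since the weight $|x-p_j|^{2\alpha_{n,\sigma_j}}$ then vanishes at $p_j$ and the global maximum $x_n$ can sit at distance $\gg\delta_n$ from $p_j$. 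In that regime the rescaled limit is a \emph{regular} entire Liouville solution of mass $8\pi$, not the singular one, so your claim that ``the Chen--Li classification forces the bubble to be centered at $p_j$'' is not correct.

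This matters for your concentration step when $\sigma_j>0$. The residual-mass scenario yields the integrability constraint $\beta_i<4\pi(1+\alpha_{\infty,\sigma_j})$, and you want to contradict the minimal mass. If only Case (III) were possible you would indeed have $\beta_i\geq 8\pi(1+\alpha_{\infty,\sigma_j})$ and an immediate contradiction for every $\alpha_{\infty,\sigma_j}>-1$. But Case (II) gives only $\beta_i\geq 8\pi$, and $8\pi<4\pi(1+\alpha_{\infty,\sigma_j})$ as soon as $\alpha_{\infty,\sigma_j}>1$, i.e.\ exactly when hypothesis \eqref{conditionmixed} of Theorem \ref{Concentration} fails. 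Your sketch therefore has a genuine gap for $\alpha_{\infty,\sigma_j}>1$: the exclusion of the residual part in \cite{bt} for such $\alpha$ relies on the homogeneity of $|x-p_j|^{2\alpha}$ in a sharper way (the argument there does not reduce to the simple combination ``minimal mass $+$ integrability'' that suffices for $\sigma_j<0$ or for $\alpha_{\infty,\sigma_j}\le 1$).
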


\begin{theorem}[Mass Quantization with Mild Boundary Conditions] \label{massboundarycontrol0} \hfill \\
  Under the assumptions of Theorem \ref{Concentration-Compactness alternative0}, suppose in addition that
  $u_n$ satisfies,
 \begin{equation*}
  \underset{\p B_1}\max\,v_n-\underset{\p B_1}\min\,v_n\leq C.
 \end{equation*}
If ${\rm (iii)}$ occurs and $q_i=p_j$ for some $i\in\{1,\ldots,m\}$, $j\in\{1,\ldots,N\}$, then
\begin{equation}
 \label{quantized0}
 \beta_i=8\pi(1+\alpha_{\ii,j}).
\end{equation}
In particular, assume that $\lm_\ii=\sum\limits_{i=1}^m\beta_i$ and let $J\subset\{1,\ldots,N\}$ be the set of indices of singular blow up points. Then necessarily
$$
\sum\limits_{j\in J}^{\,}\sigma_j<\frac12,
$$
and
\begin{equation}\label{massphys}
\lm_\ii=\tfrac{8\pi}{1-2\sum\limits_{j\in J}^{\,}\sigma_j} m.
\end{equation}
\end{theorem}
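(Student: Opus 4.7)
The plan is to adapt the argument of Theorem~\ref{massboundarycontrol} to the genuinely homogeneous-singularity setting of \eqref{eqbasedelta}, and then sum up the local contributions to obtain \eqref{massphys}. Since the blow up set $S=\{q_1,\ldots,q_m\}$ is finite and \eqref{oscillation} controls the boundary oscillation, the analysis localizes: around each $q_i\in S$ I would pick $r_0>0$ so that $B_{2r_0}(q_i)\cap S=\{q_i\}$ and $B_{2r_0}(q_i)\cap\{p_1,\ldots,p_N\}\subset\{q_i\}$. If $q_i\notin\{p_1,\ldots,p_N\}$ the coefficient is smooth and strictly positive on the ball, so the standard quantization of \cite{ls} gives $\beta_i\in 8\pi\N^+$. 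Otherwise $q_i=p_{j_i}$ for a unique $j_i\in J$ and, after a translation, I reduce to the ball $B_{r_0}$ centered at $0$ carrying a single homogeneous weight $|x|^{2\alpha_{n,\sigma_{j_i}}}$.

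For such a singular blow up point I would repeat verbatim the harmonic correction from the proof of Theorem~\ref{massboundarycontrol}: let $s_n$ be harmonic in $B_{r_0}$ with $s_n=v_n-\min_{\partial B_{r_0}}v_n$ on $\partial B_{r_0}$ (uniformly bounded by \eqref{oscillation}), and set $w_n=v_n-s_n-\min_{\partial B_{r_0}}v_n$. Then $-\Delta w_n=W_ne^{w_n}$ with $w_n=0$ on $\partial B_{r_0}$ and $W_n(x)=|x|^{2\alpha_{n,\sigma_{j_i}}}K_n(x)e^{\gamma_n(x)}$. Theorem~\ref{Concentration-Compactness alternative0} together with Green's representation give $w_n\to w_0:=-\tfrac{\beta_i}{2\pi}\log|x|+g(x)$ in $C^1_{loc}(B_{r_0}\setminus\{0\})$ for some $g\in C^1(B_{r_0})$, while $W_ne^{w_n}\to 0$ uniformly on compact subsets of $B_{r_0}\setminus\{0\}$ and $W_ne^{w_n}\rightharpoonup\beta_i\delta_0$ weakly as measures.

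The decisive step is the Pohozaev identity on $B_r$ for small $r>0$,
\[
\int_{\partial B_r}\!\Big((x,\nu)\tfrac{|\nabla w_n|^2}{2}-(\nu,\nabla w_n)(x,\nabla w_n)\Big)d\sigma=\int_{\partial B_r}(x,\nu)W_ne^{w_n}d\sigma-\int_{B_r}(2W_n+x\cdot\nabla W_n)e^{w_n}dx.
\]
The crucial simplification with respect to Theorem~\ref{massboundarycontrol} is that the weight is now truly homogeneous, so $x\cdot\nabla W_n=2\alpha_{n,\sigma_{j_i}}W_n+|x|^{2\alpha_{n,\sigma_{j_i}}}x\cdot\nabla(K_ne^{\gamma_n})$, with no perturbative $\tfrac{\epsilon_n^2}{\epsilon_n^2+|x|^2}$ remainder to trade for a liminf/limsup sandwich. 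Consequently one obtains the exact limit
\[
-\!\int_{B_r}(2W_n+x\cdot\nabla W_n)e^{w_n}dx\longrightarrow -2(1+\alpha_{\infty,\sigma_{j_i}})\beta_i,
\]
the boundary term on the right vanishes by uniform convergence of $W_ne^{w_n}$ to $0$ on $\partial B_r$, and the left hand side, evaluated on $w_0$, equals $-\tfrac{\beta_i^2}{4\pi}+o(1)$ as $r\to 0^+$. Equating the two sides yields the exact quantization $\beta_i=8\pi(1+\alpha_{\infty,\sigma_{j_i}})$, which is \eqref{quantized0}.

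Finally, assuming the no mass loss condition $\lambda_\infty=\sum_{i=1}^m\beta_i$, I plug in $\beta_i=8\pi$ at each regular blow up point and $\beta_i=8\pi(1+\alpha_{\infty,\sigma_j})=8\pi+2\lambda_\infty\sigma_j$ at each singular one (with $m$ counting total multiplicity, higher regular multiplicities being absorbed into $m$), obtaining $\lambda_\infty=8\pi m+2\lambda_\infty\sum_{j\in J}\sigma_j$; this forces $\sum_{j\in J}\sigma_j<\tfrac12$ (otherwise $\lambda_\infty>0$ cannot hold) and solving for $\lambda_\infty$ gives \eqref{massphys}. The main technical obstacle is the passage to the limit in the Pohozaev identity at each singular $q_i$: one needs $\nabla w_n\to\nabla w_0$ in $L^2(\partial B_r)$ for a.e. small $r$, together with uniform integrability of the bulk right hand side, both of which follow from the localized strong convergence on an annulus $B_{r_0}\setminus B_{r_0/2}$ furnished by Theorem~\ref{Concentration-Compactness alternative0}.
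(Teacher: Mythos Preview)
Your proposal is correct and follows essentially the same route as the paper, which does not give a detailed proof but defers to \cite{bt} and the argument of Theorem~\ref{massboundarycontrol}; you correctly identify the key simplification, namely that the genuinely homogeneous weight $|x|^{2\alpha_{n,\sigma_{j_i}}}$ kills the $\tfrac{\epsilon_n^2}{\epsilon_n^2+|x|^2}$ remainder in the Pohozaev computation and turns the sandwich inequality of Theorem~\ref{massboundarycontrol} into an equality. One small point: the claim that $s_n$ is uniformly bounded requires transferring the oscillation bound from $\partial B_1$ to $\partial B_{r_0}(q_i)$, which is not literally \eqref{oscillation} but follows from it via Green's representation (cf.\ Lemma~2.2 in \cite{bclt}, as used in Remark~\ref{rem:reg}).
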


\begin{remark} Concerning the proof of \eqref{massphys}, the assumption $\lm_\ii=\sum\limits_{i=1}^m\beta_i$ is essentially just equivalent to the
lack of boundary blow up points for for $v_n$ in $B_1$.
Thus we infer from \eqref{quantized0} that,
$$
\lm_\ii=8\pi (m-|J|)+ 8\pi |J|+2\lm_\ii\sum\limits_{j\in J}^{\,}\sigma_j,
$$
immediately implying that necessarily we have $\sum\limits_{j\in J}^{\,}\sigma_j<\frac12$, in which case
\begin{equation}
\label{lambdaexplicit:0}
 \lm_\ii=\tfrac{8\pi}{1-2\sum\limits_{j\in J}^{\,}\sigma_j} m.
\end{equation}
\end{remark}

As an immediate consequence of Theorem \ref{massboundarycontrol0} we have the following:
\begin{corollary}\label{co:bdd}
Under the assumptions of Theorem \ref{Concentration-Compactness alternative0}, suppose in addition that
  $u_n$ satisfies,
 \begin{equation*}
  \underset{\p B_1}\max\,v_n-\underset{\p B_1}\min\,v_n\leq C,
 \end{equation*}
$\lm_\ii=\sum\limits_{i=1}^m\beta_i$ and that
$$
\sum\limits_{j\in J}^{\,}\sigma_j\geq \frac12.
$$
Then $v_n$ is uniformly bounded from above near  $\{p_1,\ldots,p_N\}$.
\end{corollary}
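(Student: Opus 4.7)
The argument proceeds by contradiction, exploiting the explicit mass quantization formula \eqref{lambdaexplicit:0} provided by Theorem \ref{massboundarycontrol0}. I would begin by supposing the conclusion fails: for some $j\in\{1,\ldots,N\}$ there is a subsequence and a sequence of points $x_n\to p_j$ along which $v_n(x_n)\to +\ii$. The existence of such a sequence immediately excludes alternatives {\rm (i)} and {\rm (ii)} of Theorem \ref{Concentration-Compactness alternative0}, since in {\rm (i)} the sequence $v_n$ would stay locally bounded while in {\rm (ii)} it would tend to $-\ii$ on compact subsets. Hence alternative {\rm (iii)} is in force, the blow-up set $S=\{q_1,\ldots,q_m\}$ must contain $p_j$, and therefore the index set $J$ of singular blow-up points defined in Theorem \ref{massboundarycontrol0} is nonempty and contains $j$.

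Next I would invoke Theorem \ref{massboundarycontrol0} in full: its hypotheses are precisely the ones assumed here, namely the concentration-compactness framework together with the oscillation bound on $\p B_1$ and the equality $\lm_\ii=\sum_{i=1}^m\beta_i$. Its conclusion then yields the identity
\[
\lm_\ii=\tfrac{8\pi m}{1-2\sum_{j\in J}\sigma_j}.
\]
Under the standing assumption $\sum_{j\in J}\sigma_j\geq \tfrac12$, the denominator is either zero or strictly negative, so the right-hand side is either undefined or strictly non-positive. Both possibilities contradict the standing assumption $\lm_\ii>0$ coming from the concentration-compactness setup.

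This contradiction rules out the possibility that $v_n$ blows up along any sequence approaching a singular source. Since $\{p_1,\ldots,p_N\}$ is a finite set, a common neighborhood of its points can be chosen on which $v_n$ remains uniformly bounded from above, which is exactly the claimed conclusion.

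There is essentially no genuine obstacle in this argument: the corollary is a direct contrapositive reading of the quantization identity \eqref{lambdaexplicit:0}. The only conceptual points to verify are that the hypotheses of Theorem \ref{massboundarycontrol0} transfer verbatim from the corollary, and that alternative {\rm (iii)} necessarily places the index $j$ of the blowing-up singular source inside $J$; both checks are immediate from the statements as formulated in the previous section.
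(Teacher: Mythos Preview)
Your proof is correct and follows exactly the approach intended by the paper, which presents the corollary as an ``immediate consequence'' of Theorem \ref{massboundarycontrol0} via the contrapositive. One minor simplification: Theorem \ref{massboundarycontrol0} already concludes directly that $\sum_{j\in J}\sigma_j<\tfrac12$, so you could cite that inequality immediately rather than passing through the formula \eqref{lambdaexplicit:0} and analyzing the sign of the denominator; but this is purely stylistic and your route is equally valid.
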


\bigskip
\bigskip

\section{Blow up analysis: profile estimates without quantization}\label{sec2.1}
We will need the following Harnack-type inequality recently obtained in \cite{bcwyzHarnackInequality}.
\begin{theorem}[\cite{bcwyzHarnackInequality}]\label{thm:sup+inf}
Let $\O$ be an open bounded domain in $\R^2$ which contains the origin, $\{0\}\subset \O$. Assume that
$v_n$ is a sequence of solutions of
\begin{equation*}\label{intro:eq}
 -\D v_n=\left({\epsilon_n^2+|x|^2}\right)^{\alpha_n}K_n(x)e^{\displaystyle v_n} \qquad\text{in} \,\,\, \Omega,
\end{equation*}
where $\epsilon_n\to0^+$, as $n\to+\infty$,
\begin{equation}
 \label{intro:alphan}
 \a_n\to\a_\infty\in(-1,1),
\end{equation}
and $K_n$ satisfies,
\begin{equation*}\label{intro: Vn}
 0<a\leq K_n\leq b<+\ii, \quad K_n\in C^{0}(\O), \quad K_n\to K \,\, \text{locally uniformly in}\,\,{\O}.
\end{equation*}
Then, for any
 $$ C_0>\max\{1,\tfrac{1+\alpha_\infty}{1-\alpha_\infty}\}$$
and for any compact set $\om^{'}\Subset\O$, there exists a constant $C_1>0$, which depends only by $a,b,dist(\om^{'},\p\O), \alpha_\infty,$ and by the
uniform modulus of continuity of $K$ on $\om^{'}$, such that,
\begin{equation}
 \label{intro:sup+Cinf}
 \underset{\om^{'}}\sup\, v_n + C_0\underset{\O}\inf\, v_n\leq C_1.
\end{equation}
\end{theorem}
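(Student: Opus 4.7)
The plan is to argue by contradiction via a blow-up analysis, adapting the Brezis--Li--Shafrir strategy to the singular weight $h_n^{2\alpha_n}$ by recycling the three-case dichotomy deployed in the proof of Lemma \ref{minimalmasslemma}. Suppose the estimate were to fail: then there would exist $\omega' \Subset \Omega$ and a sequence $v_n$ with
$$A_n:=\underset{\omega'}\sup\, v_n+C_0\underset{\Omega}\inf\,v_n\to+\infty.$$
Since $v_n$ is superharmonic, $\inf_\Omega v_n$ is attained on $\partial\Omega$, so $M_n:=\sup_{\omega'}v_n\to+\infty$. I would then pick $x_n\in\overline{\omega'}$ realizing $v_n(x_n)=M_n$, extract $x_n\to x_*\in\overline{\omega'}$, and perform the blow-up with $\delta_n^{2(1+\alpha_n)}=e^{-M_n}$ (as in \eqref{dtn}) and $t_n:=\max\{|x_n|,\delta_n\}$.

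Setting $\tilde v_n(y):=v_n(x_n+\delta_n y)-M_n$ and distinguishing the limits of $\epsilon_n/t_n$ and $|x_n|/\delta_n$ exactly as in Cases (I)--(III) of Lemma \ref{minimalmasslemma} -- with an appropriate adjustment of the reference scale in Cases I and II, where the bubble sees an effectively constant weight -- one extracts a limit $\tilde v$ on $\R^2$ satisfying either the regular Liouville equation $-\Delta\tilde v = K(x_*) e^{\tilde v}$ or the singular one $-\Delta\tilde v=(\epsilon_0^2+|y|^2)^{\alpha_\infty}K(0)e^{\tilde v}$. The classification of entire solutions (Chen--Li \cite{cl2} and Prajapat--Tarantello--type, recalled in the proof of Lemma \ref{minimalmasslemma}) yields total mass $m\in\{8\pi,\,8\pi(1+\alpha_\infty)\}$ together with the asymptotic $\tilde v(y)=-\tfrac{m}{2\pi}\log|y|+O(1)$ as $|y|\to\infty$. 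Translating back to $v_n$ gives $v_n(x)\leq M_n-\tfrac{m}{2\pi}\log(|x-x_n|/\delta_n)+O(1)$ for $x$ at unit distance from $x_n$; a standard Harnack bound on dyadic annuli around $x_n$ combined with the superharmonicity of $v_n$ bridges this local estimate to $\partial\Omega$, producing
$$\inf_\Omega v_n\leq M_n+\tfrac{m}{2\pi}\log\delta_n+O(1).$$

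Inserting the appropriate relation between $\log\delta_n$ and $M_n$ -- namely $\log\delta_n=-\tfrac{M_n}{2(1+\alpha_n)}+O(1)$ in the singular Case III, and $\log\delta_n=-\tfrac{M_n}{2}+O(1)$ (modulo a correction depending on $|x_n|$ and $\epsilon_n$) in Cases I and II -- and plugging into $A_n\to+\infty$ produces upper bounds on $C_0$ depending on $m$ and $\alpha_\infty$. A short optimization reveals that the binding constraint is produced by a mass-$8\pi$ bubble sitting at the singular point: the singular scaling then forces $C_0<\tfrac{1+\alpha_\infty}{1-\alpha_\infty}$, whereas the symmetric singular bubble ($m=8\pi(1+\alpha_\infty)$) and any regular bubble at $x_*\neq 0$ only demand $C_0<1$. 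Consequently the hypothesis $C_0>\max\{1,\tfrac{1+\alpha_\infty}{1-\alpha_\infty}\}$ rules out $A_n\to+\infty$ in every case, yielding the theorem. The hard part will be to promote the local bubble asymptotic to a clean upper bound on $\partial\Omega$: one needs to simultaneously exclude residual mass and secondary bubbles at intermediate scales, typically via a Pohozaev identity on dyadic annuli (in the spirit of \eqref{Pohozaevnwn}) coupled with an iterated Harnack argument, and to do so uniformly in $\alpha_n\to\alpha_\infty\in(-1,1)$ and in the weight parameter $\epsilon_n$ -- this is the technical heart of the proof.
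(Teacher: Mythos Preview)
The paper does not contain a proof of this theorem: it is quoted verbatim from the companion preprint \cite{bcwyzHarnackInequality} and used as a black box (see the remark immediately following the statement and its application inside Lemma~\ref{LS-quant}). There is therefore no proof in the present paper to compare your proposal against.

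That said, one substantive caution about your sketch: the three-case dichotomy of Lemma~\ref{minimalmasslemma} that you invoke is built on the standing hypothesis $\int_{B_1} h_n^{2\alpha_{n,\sigma}} e^{v_n}\le C$, whereas Theorem~\ref{thm:sup+inf} explicitly carries \emph{no} mass bound (the paper stresses this right after the statement). Without that bound you cannot conclude convergence of the rescaled solution to an entire Liouville solution with finite mass, so the classification step in your argument is not available as written. The Brezis--Li--Shafrir mechanism for sup$+C$inf estimates does not pass through entire-solution classification; it proceeds instead by comparison with explicit radial profiles on shrinking balls and a Pohozaev-type monotonicity, and it is precisely this route (adapted to the perturbed weight $(\epsilon_n^2+|x|^2)^{\alpha_n}$) that one expects \cite{bcwyzHarnackInequality} to follow. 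Your final paragraph gestures in this direction, but the earlier part of the sketch would need to be reorganized around comparison rather than classification.
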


Remark that no assumption is made at all about $\int\limits_{\om}\left({\epsilon_n^2+|x|^2}\right)^{\alpha_n}K_n(x)e^{\displaystyle v_n}$, whence unlike 
problem \eqref{eqbase}-\eqref{lambdan}, here $\al_n$  is any sequence of numbers satisfying \eqref{intro:alphan}.

In view of Theorem \ref{thm:sup+inf}, in the same spirit of \cite{ls} we have a partial quantization result for solutions
of \eqref{eqbase}, \eqref{lambdan}, \eqref{lambdantoinfty}. It is of independent interest as it does not rely on boundary conditions but rather just on
the condition
\beq\label{conditionmixed-n}
\lm_\ii<\frac{4\pi}{\sg}.
\eeq
Recall that $\dt_n$ always denotes the quantity defined in \eqref{dtn}.
\begin{lemma}[A quantization Lemma] \label{LS-quant} Let $v_n$ be a sequence of solutions of \eqref{eqbase}, \eqref{lambdan}, \eqref{lambdantoinfty} where ${\sg}>0$,
$K_n$ satisfies \eqref{K+}, \eqref{Kconvergence1} and $\lm_\ii$ satisfies \eqref{conditionmixed-n}. Assume that \eqref{bounded} and \eqref{explosion} 
are satisfied and that  there exists $d_n\to 0^+$ with the following properties:\\
$$
\frac{\eps_n}{d_n}\leq C,\qquad \frac{\dt_n}{d_n}\to 0,
$$
$$
\int\limits_{B_{4d_n}}H_n e^{\dis v_{n}}\to \beta\geq 8\pi,
$$
and
\begin{equation}
\label{hypols-lem}
v_n(x)+2(1+\al_{n,\sg})\log(|x|)\leq C, \;\forall\, 4d_n\leq|x|\leq 1,
\end{equation}
for some $C>0$. Then
$$
\int\limits_{B_{1}\setminus B_{4d_n}}H_n e^{\dis v_{n}}\to 0\quad\mbox{and}\quad
\lm_n\to \lm_\ii=\beta.
$$
\end{lemma}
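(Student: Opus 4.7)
The plan is to combine a bubble rescaling at scale $\delta_n$ with the sup+Cinf estimate of Theorem~\ref{thm:sup+inf} and the concentration argument of Theorem~\ref{Concentration}. The idea is that all of the limiting mass $\lm_\ii$ is carried by the bubble, so $\lm_\ii=\beta$ and the complementary integral vanishes automatically.

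First, the concentration argument of Theorem~\ref{Concentration} (applicable since \eqref{conditionmixed} holds as $\sg>0$ and $\lm_\ii<\tfrac{4\pi}{\sg}$) gives $\vn\to-\ii$ locally uniformly on $B_1\setminus\{0\}$ and $H_n e^{\vn}\wk\lm_\ii\delta_{0}$ weakly in $B_1$, hence $\int_{B_r}H_n e^{\vn}\to\lm_\ii$ and $\int_{B_1\setminus B_r}H_n e^{\vn}\to 0$ for every fixed $r\in(0,1)$. The lemma thus reduces to $\beta=\lm_\ii$, or equivalently $\lim\int_{B_r\setminus B_{4d_n}}H_n e^{\vn}=0$ for some small fixed $r$.

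To prove $\beta=\lm_\ii$, I would rescale at the bubble scale by setting $\phi_n(z)=\vn(\delta_n z)+2(1+\bns)\log\delta_n$ on $B_{1/\delta_n}$, so that $\phi_n\leq 0$ solves the rescaled Liouville equation with weight $W_n(z)=(\eps_n^2/\delta_n^2+|z|^2)^{\bns}K_n(\delta_n z)$. Following the bubble analysis of Lemma~\ref{minimalmasslemma} (Cases~(I)--(III), according to the relative scalings of $\eps_n,|x_n|,\delta_n$), up to subsequence and translation $\phi_n\to\phi$ in $C^{1,\tau}_{\rm loc}(\R^2)$ where $\phi$ is a finite-mass solution on $\R^2$ with total mass $m=\int_{\R^2}We^{\phi}$. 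The change of variables $\int_{B_{R\delta_n}}H_n e^{\vn}=\int_{B_R}W_n e^{\phi_n}$ together with local dominated convergence then give $\int_{B_R}W_n e^{\phi_n}\to\int_{B_R}We^{\phi}\uparrow m$ as $R\to\ii$.

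The crux is the diagonal passage $\int_{B_{R_n}}W_n e^{\phi_n}\to m$ for every $R_n\to\ii$ with $R_n\delta_n\leq 1$: taking $R_n=4d_n/\delta_n$ yields $\beta=m$, while taking $R_n=r/\delta_n$ and combining with the first step yields $\lm_\ii=m$, so that $\beta=\lm_\ii$. This diagonal passage reduces to a uniform tail estimate $\int_{B_{R_n}\setminus B_R}W_n e^{\phi_n}\to 0$ as $R\to\ii$. The rescaled hypothesis \eqref{hypols-lem} reads $\phi_n(z)\leq-2(1+\bns)\log|z|+C$ on $\{4d_n/\delta_n\leq|z|\leq 1/\delta_n\}$, yielding only the borderline $W_n e^{\phi_n}\leq C|z|^{-2}$, which is \emph{not} integrable over such annular regions. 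The main obstacle is to upgrade this bound: I would apply Theorem~\ref{thm:sup+inf} to $\phi_n$ on suitable annular subregions (using $\bns\in(0,1)$, ensured by $\lm_\ii<\tfrac{4\pi}{\sg}$) and exploit the blow-down of $\phi_n$ near the outer boundary $|z|\sim 1/\delta_n$ (a consequence of the first step) to derive a strictly sharper pointwise decay $\phi_n(z)+2(1+\bns)\log|z|\to-\ii$, making the tail integrable. Once this is in place,
\[
\int_{B_1\setminus B_{4d_n}}H_n e^{\vn}=\lm_n-\int_{B_{4d_n}}H_n e^{\vn}\to\lm_\ii-\beta=0,\qquad \lm_n\to\beta,
\]
completing the proof.
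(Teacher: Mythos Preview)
Your reduction via Theorem~\ref{Concentration} to the annular estimate $\int_{B_r\setminus B_{4d_n}}H_ne^{v_n}\to 0$ is correct, and you rightly identify the crux: hypothesis~\eqref{hypols-lem} alone gives only the borderline $H_ne^{v_n}\leq C|x|^{-2}$, which must be upgraded via Theorem~\ref{thm:sup+inf}. But two parts of your plan fail. First, the bubble rescaling at scale $\delta_n$ is not generally valid: when $\eps_n/\delta_n\to\infty$ (Case~(I) of Lemma~\ref{minimalmasslemma}) the rescaled weight $W_n(z)=(\eps_n^2/\delta_n^2+|z|^2)^{\bns}K_n(\delta_nz)$ diverges locally (since $\bns>0$), so no limiting profile $\phi$ exists and the diagonal comparison collapses. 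Second, your proposed mechanism for the upgrade---propagating the ``blow-down of $\phi_n$ near the outer boundary $|z|\sim1/\delta_n$'' inward---does not yield a rate that is uniform over the full range $s\in[4d_n,r]$: iterating Harnack from the outer edge accumulates an additive error $O(\log(r/s))$, which at $s=4d_n$ can overwhelm whatever blow-down you started with.

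The paper's argument is direct and avoids any profile limit or case split. For each $s\in[4d_n,r]$, set $\hat v_n(x)=v_n(sx)+2(1+\bns)\log s$ on $B_2$. A Harnack inequality on the annulus $B_2\setminus B_{1/2}$ (where the right-hand side is $L^\infty$-bounded thanks to~\eqref{hypols-lem} and $\eps_n/s\leq C$) gives $\sup_{\partial B_s}v_n\leq\tau_0\inf_{\partial B_s}v_n+O(|\log s|)$. Separately, Theorem~\ref{thm:sup+inf} applied to $\hat v_n$ on $B_1$ gives $\sup_{B_{s/2}}v_n+C_0\inf_{B_s}v_n\leq C-2(1+C_0)(1+\bns)\log s$; since the global maximum point $x_n$ lies in $B_{s/2}$, this yields $\inf_{B_s}v_n\leq -C_0^{-1}v_n(x_n)+O(|\log s|)$. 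Combining (via superharmonicity, $\inf_{\partial B_s}v_n=\inf_{B_s}v_n$) gives $e^{v_n(x)}\leq C\,\delta_n^{\,a}|x|^{-2(1+\bns)-a}$ with $a=2(1+\bns)\tau_0/C_0>0$ on the whole annulus, so $H_ne^{v_n}\leq C(\delta_n/|x|)^a|x|^{-2}$ and the integral is $O((\delta_n/d_n)^a)\to0$. The decisive input is the \emph{interior} blow-up $v_n(x_n)\to+\infty$ fed into Theorem~\ref{thm:sup+inf}---not any boundary information---which forces $\inf_{B_s}v_n$, and then via Harnack $\sup_{\partial B_s}v_n$, quantitatively below the borderline, uniformly in $s$.
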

\begin{proof}\hfill \\
We will use $C>0$ to denote several constants whose value may change possibly line to line all along the proof.\\
We argue as in \cite{ls}. In view of \eqref{Kzero}, there exists $r_0>0$ such that $\min\limits_{\overline{B_{r_0}}} K>0$. 
Let us fix any $r\in [\tfrac{r_0}{4},\tfrac{r_0}{2}]$ and set $s\in[4 d_n,r]$, 
$$
\hat\O:=B_2\backslash B_{\frac{1}{2}},
$$
and
\[
 \hat v_n(x)=v_n(sx)+2(1+\bns)\log s,\quad x\in B_2.
\]
Then,
\[
 -\D \hat v_n(x)=\left({\frac{\epsilon_n^2}{s^2}+|x|^2}\right)^{\bns}K_n(sx)e^{\dis \hat v_{n}(x)}=:f_n(x) \quad\text{in}\quad B_2,
\]
and by \eqref{hypols-lem} and the fact that 
$$
\tfrac{\epsilon_n}{s}\leq\tfrac{\epsilon_n}{4d_n}\leq\tfrac{C}{4},
$$  
we have $\|f_n\|_{L^{\infty}(\hat \O)}\leq C$. Let $\kappa_n$ be the solution of
\begin{align*}
\begin{cases}
-\D \kappa_n=f_n & \hat\O, \\
\kappa_n=0 &\p\hat\O,
\end{cases}
\end{align*}
by standard elliptic estimates we have $\|\kappa_n\|_{L^\infty(\hat \O)}\leq C$. Then, once again by \eqref{hypols-lem},  the harmonic function $g_n=\kappa_n-\hat v_n$ is bounded from below by $-C$ and, by the Harnack inequality, there exists a universal constant $\tau_0\in(0,1)$ such that
\[
 \tau_0\sup_{\p B_1} (g_n+C)\leq \inf_{\p B_1}(g_n+C).
\]
Moving back to $\hat v_n$ we see that,
\[
 \sup_{\p B_1} \hat v_n\leq\tau_0 \inf_{\p B_1}\hat v_n+\hat C,
\]
which implies that,
\begin{equation}
 \label{HarnackI-lem}
 \sup_{\p B_s} v_n\leq\tau_0 \inf_{\p B_s}v_n-2(1-\tau_0)(1+\bns)\log  s+\hat C,
\end{equation}
for every $s\in[4d_n,r]$. On the other hand, by assumption we have \eqref{conditionmixed-n},
that is, $\al_{\ii,\sg}=\frac{\lm_\ii}{4\pi}\sg<1$, and consequently that $\hat v_n$ satisfies \eqref{intro:sup+Cinf}, namely, there exist two positive constants $C_0,C_1$, depending by $\al_{\ii,\sg}$, such that
\[
 \sup_{B_{\frac{1}{2}}} \hat v_n+C_0\inf_{B_1}\hat v_n\leq C_1,
\]
which immediately implies that,
\[
 \sup_{B_{\frac{s}{2}}} v_n+C_0\inf_{B_s} v_n\leq C_1-2(1+C_0)(1+\bns)\log s,
\]
for every $s\in[4d_n,r]$.  By noticing that  $ \sup\limits_{B_{\frac{s}{2}}} v_n= v_n(x_n)$, we can rewrite the above inequality as follows,
\begin{equation}
 \label{sup+infI-lem}
 \inf_{B_s} v_n\leq \tfrac{C_1}{C_0}-2(1+\tfrac{1}{C_0})(1+\bns)\log s-\tfrac{1}{C_0}v_n(x_n).
\end{equation}
Combining \eqref{HarnackI-lem} and \eqref{sup+infI-lem} and using the superharmonicity of $v_n$, we have that
\[
 \sup_{\p B_s}v_n\leq \tau_0\tfrac{C_1}{C_0}+\hat C-\tfrac{\tau_0}{C_0}v_n(x_n)-2(1+\tfrac{\tau_0}{C_0})(1+\bns)\log s,
\]
for every $s\in[4d_n,r]$, or equivalently,
\[
 e^{v_n(x)}\leq C_2\delta_n^{2(1+\bns)\tfrac{\tau_0}{C_0}}|x|^{-2(1+\bns)(1+\tfrac{\tau_0}{C_0})},
\]
for every $|x|\in[4d_n,r]$. In particular, this implies that
\begin{align*}
 &\int\limits_{B_{\frac{r_0}{2}}\setminus B_{4d_n}}H_n e^{\dis v_{n}}=\hspace{-.3cm}\int\limits_{B_{\frac{r_0}{2}}\setminus B_{4d_n}}(\eps_n^2+|x|^2)^{\bns} K_n e^{\dis v_{n}}\leq\\
 & C\hspace{-.3cm}\int\limits_{B_{\frac{r_0}{2}}\setminus B_{4d_n}}(\eps_n^2+|x|^2)^{\bns} \left(\frac{\delta_n}{|x|}\right)^{2(1+\bns)\tfrac{\tau_0}{C_0}} \left(\frac{1}{|x|}\right)^{2(1+\bns)}\leq\\
 & C\left(\frac{\delta_n}{d_n}\right)^{2(1+\bns)\tfrac{\tau_0}{C_0}}\underset{4\leq|z|\leq \tfrac{1}{d_n}}\int \left(\frac{\eps_n^2}{d_n^2}+|z|^2\right)^{\bns} \left(\frac{1}{|z|}\right)^{2(1+\bns)(1+\tfrac{\tau_0}{C_0})}\leq
 \\
 &C\left(\frac{\delta_n}{d_n}\right)^{2(1+\bns)\tfrac{\tau_0}{C_0}}\underset{4\leq|z|\leq \tfrac{1}{d_n}}\int \left(\frac{1}{|z|}\right)^{2}\left(\frac{1}{|z|}\right)^{2(1+\bns)\tfrac{\tau_0}{C_0}}\leq C\left(\frac{\delta_n}{d_n}\right)^{2(1+\bns)\tfrac{\tau_0}{C_0}}\to 0,
\end{align*}
where we used $\frac{\delta_n}{d_n}\to 0$. At this point, by using also \eqref{-infinity}, we deduce that,
\begin{align*}
\lambda_n=\int\limits_{B_{4d_n}}H_n e^{\dis v_{n}}+\int\limits_{B_{\frac{r_0}{2}}\setminus B_{4d_n}}H_n e^{\dis v_{n}}+o(1)=\beta+o(1),
\end{align*}
as claimed.
\end{proof}

\bigskip
\bigskip

At this point we provide a refined estimate about the phenomenon of concentration without quantization in Theorem \ref{massboundarycontrol}.
In view of the application in section \ref{sec6}, we will be concerned only with the more delicate case $\sg>0$, where the the energy terms due to the vorticity and the singularity have opposite sign (see \eqref{enrg:n}).
One of the allowed limiting profiles corresponds after blow up to solutions of the following planar problem,
\begin{align}
\begin{cases}\label{profile:tildew}
 -\D \widetilde w=\widetilde V(y) e^{\dis \widetilde w} \quad\text{in}\quad \R^2, \\
\widetilde V(y)=c_0\left(\epsilon_0^2+|y|^2\right)^{\bis},\quad c_0>0,\\
\widetilde \beta := \int_{\R^2}\widetilde V e^{\dis \widetilde w}\leq\lm_\ii<16\pi,\\
\widetilde w(y)\leq \widetilde w(y_0)=0,\quad y_0\in\R^2.
\end{cases}
\end{align}
We summarize few facts about \eqref{profile:tildew} right before Lemma \ref{massstrange} in the Appendix.\\

Recall that we always assume \eqref{lambdantoinfty}, that is $\lm_n\to \lm_\ii>0$.
\begin{theorem}\label{profile-new} Under the hypothesis of Theorem \ref{massboundarycontrol},
assume furthermore that,
$$
\sg\in\left(0,\frac12\right)\quad \mbox{and} \quad 8\pi\leq \lm_\ii< \frac{4\pi}{\sg}
$$ and set
\[
 t_n=\max\{\delta_n,|x_n|\}.
\]
Then, either,
\begin{itemize}
 \item \mbox{\rm (I):}  there exists a subsequence such that $\tfrac{\epsilon_n}{t_n}\to+\infty$,
\end{itemize}
in which case we have $\lm_\ii=8\pi$ and
\begin{equation}\label{profilev:H}
v_n(x)=\log\left(\dfrac{e^{\dis v_n(x_n)}}
{\left(1+\gamma_n\theta_n^{2(1+\bns)}\epsilon_n^{-\frac{\lm_n}{4\pi}}|x-{x_n}|^{\frac{\lm_n}{4\pi}}\right)^2}\right)+O(1),\quad z\in B_{r}(0);
\end{equation}
where
$\theta_n^{2(1+\al_{n,\sg})}=
\left(\tfrac{\epsilon_n}{\delta_n}\right)^{2(1+\al_{n,\sg})}\to +\ii$, $8 \gamma_n={(1+|\frac{x_n}{\epsilon_n}|^2)^{\bns} K_n({x_n})}$, or
\begin{itemize}
 \item \mbox{\rm (II):} there exists a subsequence such that $\tfrac{\epsilon_n}{t_n}\leq C$ and $\tfrac{|x_n|}{\delta_n}\to+\infty$,
\end{itemize}
in which case$\lm_\ii=8\pi$ and
\begin{equation}\label{profilev:H1}
v_n(x)=\log\left(\dfrac{e^{\dis v_n(x_n)}}
{\left(1+\bar\gamma_n\bar \theta_n^{2(1+\al_{n,\sg})}|x_n|^{-\frac{\lm_n}{4\pi}} |x-{x_n}|^{\frac{\lm_n}{4\pi}}\right)^2}\right)+O(1),\quad z\in B_{r}(0),
\end{equation}
where $\bar \theta_n^{2(1+\al_{n,\sg})}=\left(\tfrac{|x_{n}|}{\delta_n}\right)^{2(1+\al_{n,\sg})}\to +\ii$,
$8\bar\gamma_n={((\tfrac{\eps_n}{|x_n|})^2+1)^{\bns}}K_n(x_n)$, or
\begin{itemize}
 \item \mbox{\rm (III):} there exists a subsequence such that $\tfrac{\epsilon_n}{t_n}\leq C$ and
 $\tfrac{|x_n|}{\delta_n}\leq C$,
\end{itemize}
in which case, along a further subsequence if necessary, we have $\tfrac{\eps_n}{\delta_n}\to \eps_0\geq 0$ and\\ $\lm_\ii\in (8\pi,\tfrac{8\pi}{1-2\sg}]$, if $\sigma\in(0,\tfrac{1}{4})$, or $\lm_\ii\in(8\pi,\tfrac{4\pi}{\sigma})$, if $\sg\in[\tfrac{1}{4},\tfrac12)$ and
\begin{equation}\label{profilevtilde:H1-IIb}
 v_n(x)=v_n(x_n)+ \widetilde U_n(\delta_n^{-1}x)+O(1), \quad x\in B_{r}(0),
\end{equation}

where
\begin{equation*}
    \widetilde U_n(y)=\graf{\widetilde w(y)+O(1),\quad |y|\leq R \\ -\frac{\lm_n}{2\pi}\log(|y|) + O(1),\quad R\leq  |y|\leq r\delta_n^{-1}}
\end{equation*}
and $\widetilde w$ is the unique solution of \eqref{profile:tildew}.
\end{theorem}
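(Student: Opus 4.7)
The strategy is a case-by-case blow-up following the trichotomy already used in Lemma~\ref{minimalmasslemma}, followed by an upgrade from local uniform convergence of the rescaled sequences to the pointwise profile on $B_r(0)$ via the quantization Lemma~\ref{LS-quant} and the sup$+C$inf inequality of Theorem~\ref{thm:sup+inf}. The hypotheses $\sigma\in(0,\tfrac12)$ and $\lambda_\infty<\tfrac{4\pi}{\sigma}$ ensure $\alpha_{\infty,\sigma}<1$, so Theorem~\ref{thm:sup+inf} applies. Theorem~\ref{massboundarycontrol} already provides $\beta=\lambda_\infty\leq\min\{\tfrac{8\pi}{1-2\sigma},\tfrac{4\pi}{\sigma}\}<16\pi$, which will force a single bubble in the alternative analysis.

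In Case~(I), the rescaling $w_n(z)=v_n(\epsilon_n z)+2(1+\alpha_{n,\sigma})\log\epsilon_n$ produces, as in the proof of Lemma~\ref{minimalmasslemma}, a limit $w$ solving a \emph{regular} Liouville equation on $\mathbb{R}^2$ with mass $8m\pi$; since $\lambda_\infty<16\pi$ and by Theorem~\ref{massboundarycontrol} all mass is captured at $0$, necessarily $m=1$ and $\lambda_\infty=8\pi$. The classification of entire bubbles for $-\Delta w=K(0)(1+|z|^2)^{\alpha_{\infty,\sigma}}e^w$ centered at the translate of $x_n/\epsilon_n$ then pins down the profile, and unscaling by $\theta_n=\epsilon_n/\delta_n$ delivers \eqref{profilev:H}. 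Case~(II) is analogous with $\bar w_n(z)=v_n(|x_n|z)+2(1+\alpha_{n,\sigma})\log|x_n|$, the limiting regular bubble being centered at $\bar z_0=\lim x_n/|x_n|$ on the unit sphere, and unscaling by $\bar\theta_n=|x_n|/\delta_n$ yields \eqref{profilev:H1}.

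Case~(III) is the core of the theorem. Rescaling by $\widetilde w_n(y)=v_n(\delta_n y)+2(1+\alpha_{n,\sigma})\log\delta_n$ and following the argument in the proof of Lemma~\ref{minimalmasslemma}, one obtains local uniform convergence to the unique solution $\widetilde w$ of the singular planar problem \eqref{profile:tildew} with $c_0=K(0)$, $\epsilon_0=\lim\epsilon_n/\delta_n$, $y_0=\lim x_n/\delta_n$ and total mass $\widetilde\beta\in[8\pi,\lambda_\infty]$. The classification summarized before Lemma~\ref{massstrange}, together with the upper bound $\widetilde\beta=\lambda_\infty$ from Theorem~\ref{massboundarycontrol}, dictates the allowed values of $\lambda_\infty$, and the split $\sigma\in(0,\tfrac14)$ versus $\sigma\in[\tfrac14,\tfrac12)$ occurs exactly where the two bounds $\tfrac{8\pi}{1-2\sigma}$ and $\tfrac{4\pi}{\sigma}$ cross. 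The inner expansion $v_n(x)=v_n(x_n)+\widetilde w(\delta_n^{-1}x)+O(1)$ on $|x|\leq R\delta_n$ is then just the local uniform convergence; its extension to the annulus $R\delta_n\leq|x|\leq r$, where $\widetilde U_n$ behaves like $-\tfrac{\lambda_n}{2\pi}\log|y|$, follows from the known asymptotic behavior of $\widetilde w$ at infinity combined with Green's representation of $v_n$ on $B_r(0)$, arguing as in the proof of Lemma~\ref{LS-quant} so that no residual mass escapes the bubble.

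The main obstacle is precisely the extension step in all three cases. Without quantization of $\widetilde\beta$ one cannot invoke the moving-plane technique of \cite{yy} (the weight $h_n$ has the wrong monotonicity) nor the pointwise estimates of \cite{bclt,bt2} which presuppose knowledge of the mass. The substitute is the sup$+C$inf inequality Theorem~\ref{thm:sup+inf}, available exactly because $\alpha_{\infty,\sigma}<1$; applied to $\widetilde w_n$ it produces the bound $v_n(x)+2(1+\alpha_{n,\sigma})\log|x-x_n|\leq C$ on $4d_n\leq|x-x_n|\leq r$ (for an appropriate $d_n$ of order $\epsilon_n$, $|x_n|$, or $\delta_n$ according to the case) which is precisely the hypothesis \eqref{hypols-lem} of Lemma~\ref{LS-quant}. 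That Lemma then closes the mass budget, forces the power-law decay rate $|x-x_n|^{\lambda_n/(4\pi)}$ displayed in \eqref{profilev:H}--\eqref{profilev:H1} and, in Case~(III), enables the Green-function representation on $B_r(0)\setminus B_{R\delta_n}(0)$ which yields \eqref{profilevtilde:H1-IIb}. This is the one place where the peculiar restriction $\lambda_\infty<\tfrac{4\pi}{\sigma}$ is strictly used, reflecting the impossibility of obtaining pointwise profiles of this type in the counterexamples of \cite{llty}.
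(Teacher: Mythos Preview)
Your overall architecture matches the paper's: the trichotomy from Lemma~\ref{minimalmasslemma}, identification of the local bubble, then extension to a global profile on $B_r(0)$ via Lemma~\ref{LS-quant}. However there are two genuine gaps.

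First, you have the role of the sup$+C$inf inequality backwards. The decay bound $v_n(x)+2(1+\alpha_{n,\sigma})\log|x|\leq C$ on the annulus (the hypothesis \eqref{hypols-lem} of Lemma~\ref{LS-quant}) is \emph{not} produced by Theorem~\ref{thm:sup+inf}. In the paper it is obtained by a separate contradiction argument (see \eqref{hypolsI-A}, \eqref{hypolsI-B}, \eqref{hypolsII-B}): if the bound failed along some $z_n$, one would rescale by $|z_n|$ and find a second simple blow-up point with mass $8\pi$, disjoint from the first bubble, forcing total mass $\geq 16\pi>\lambda_\infty$. The sup$+C$inf inequality is used only \emph{inside} the proof of Lemma~\ref{LS-quant}, after its hypothesis has been secured; you cannot invoke a lemma's internal tool to supply that lemma's own hypothesis. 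For the same reason, in Case~(III) the equality $\widetilde\beta=\lambda_\infty$ does not come from Theorem~\ref{massboundarycontrol} (which only gives $\beta=\lambda_\infty$ for the Dirac mass, with $\widetilde\beta\leq\beta$ a priori) but from Lemma~\ref{LS-quant} applied with $d_n=\tfrac12 R_n\delta_n$, cf.\ \eqref{mass-IIb}.

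Second, ``unscaling delivers \eqref{profilev:H}'' skips a substantial step. Local convergence of $w_n$ to a regular Chen--Li bubble gives only the standard quadratic profile on compact sets (exponent $2$, as in \eqref{profilew}). Extending this to all of $B^{(n)}$ with the nonstandard exponent $M_n/(4\pi)$ requires the Green-representation argument of Section~\ref{app:est-1}: a first crude bound \eqref{initialestimatewn2} with an $\varepsilon$-loss, then the refined estimate \eqref{refinedestimate:wn} via the kernel identity \eqref{integrallogarithmicestimate}. After that one must still replace $M_n$ by $\lambda_n$, which uses the residual-mass bound around \eqref{new:n} and relies on $2\sigma<1$. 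None of this is automatic from the bubble classification.
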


\proof
Recall that, in view of Theorems \ref{Concentration}, \ref{Concentration-Compactness alternative} and \ref{massboundarycontrol}, we have
that,
$$
H_{n}e^{\dis v_n}\rightharpoonup \beta \delta_{p=0},\quad \beta=\lm_\ii,
$$
$$
\lm_n=\ino H_{n}e^{\dis v_n}\to \lm_\ii,
$$
where,
\beq\label{mass:lminfty1}
\graf{8\pi\leq \lm_\ii\leq \frac{8\pi}{1-2\sg},\quad \sg\in (0,\frac14)\\$\,$\\ 8\pi\leq \lm_\ii< \frac{4\pi}{\sg},\quad \sg\in [\frac14,\frac12)}
\eeq
whence in particular,
$$
\lm_\ii<16\pi.
$$

Remark that, putting $m_n=\inf\limits_{B_1} v_n$, in view of \eqref{oscillation} we have that,
\begin{equation}\label{profilewn-n}
v_n(x)-m_n=\lm_\ii G_\om(x,0)+O(1),\quad x\in B_1\setminus B_r(0).
\end{equation}

Here and in the rest of this proof we set,
$$
c_0=K(0).
$$
Putting,
\[
 \delta_n^{2+2\bns}=e^{-{\dis v_n(x_n)}}\to 0,\,\, \text{as} \,\, n\to\infty,
\]
we proceed to the discussion of the cases in the same order as in the (minimal mass) Lemma \ref{minimalmasslemma},  
recalling that
$
 t_n=\max\{\delta_n,|x_n|\}.
$

\begin{itemize}
 \item {\bf CASE} (I): there exists a subsequence such that $\tfrac{\epsilon_n}{t_n}\to+\infty$.
\end{itemize}

Recall that in this case \eqref{epsilondelta&epsilonxn} is satisfied (that is $\frac{\epsilon_n}{\delta_n}\to+\infty$ and $\frac{\epsilon_n}{|x_n|}\to+\infty$)
and we define $B^{(n)}$ and $w_n$ as in \eqref{bn1}, \eqref{wn:IA}. Therefore $w_n$ is a solution of
\begin{align}\label{eqwn:IA2}
\begin{cases}
-\D w_n=V_n(z)e^{\dis w_n} \quad\text{in}\quad B^{(n)},\\
V_n(z)=\left({1+|z|^2}\right)^{\bns}K_n(\epsilon_n z)\to
c_0\left({1+|z|^2}\right)^{\bis}\;\mbox{in}\;C^t_{loc}(\R^2),\\
\int\limits_{B^{(n)}}V_n e^{\dis w_n}= \lm_n +o(1) = \lm_\ii +o(1),\\
w_n(\tfrac{x_n}{\epsilon_n})=2(1+\bns)\log(\tfrac{\epsilon_n}{\delta_n})\to +\infty.
\end{cases}
\end{align}

\begin{remark}\label{rem:reg} Clearly the concentration compactness theory (\cite{bm}, \cite{yy})
for regular Liouville type equations can be applied to $w_n$ on any compact subset of $\R^2$, implying that
the blow up set relative to $w_n$ is finite and the limiting total mass cannot exceed $\lm_\ii$. Actually, for any compact set $U\subset \R^2$
far away from the blow up set of $w_n$, since $v_n$ satisfies \eqref{oscillation}, by Lemma 2.2 in \cite{bclt} we have that,
\begin{equation}\label{osc:wnU}
  \underset{U}\max\,w_n-\underset{U}\min\,w_n\leq C_U.
\end{equation}
Therefore, in particular
\begin{equation*}
  \underset{\p B_{R}(0)}\max\,w_n-\underset{\p B_{R}(0)}\min\,w_n\leq C_{R},
\end{equation*}
for any $R$ small enough and by the result in \cite{yy} the origin is a simple blow up point for $w_n$,
$$
\int\limits_{B_{R}(0)}V_n e^{\dis w_n}=8\pi +o(1).
$$
\end{remark}

We want to show that, for any $\dt\in (0,2]$, there exist $C_{r,\dt}>0$ such that
\begin{equation}
\label{hypolsI-A}
 \sup_{B_{2r}\backslash B_{\dt\epsilon_n}}\{v_n(z)+2(1+\bns)\log |z|\}\leq C_{r,\dt}.
\end{equation}
By contradiction, we assume the existence of a sequence $z_n$ such that,
\[
 v_n(z_n)+2(1+\bns)\log |z_n|\to+\infty, \,\,\text{as}\,\,n\to+\infty,
\]
\[
 \mbox{and }\quad \dt\epsilon_n\leq |z_n|\leq 2r.
\]
Then, let us define
\[
  v_{n,1}(z)= v_n(|z_n|z)+2(1+\bns)\log |z_n|
\]
which satisfies
\begin{align*}
\begin{cases}
-\D  v_{n,1}= V_{n,1}(z)e^{\dis v_{n,1}} \qquad\text{in}\,\,B_{\frac{r}{|z_{n}|}}(0)\\
V_{n,1}(z)=\left({\frac{\epsilon_n^2}{|z_{n}|^2}+|z|^2}\right)^{\bns}K_{n}(|z_n| z)
\to c_0\left({\epsilon_{0,1}^2+|z|^2}\right)^{\bis}\;\mbox{in}\;C^t_{loc}(\R^2)\\
\int_{B_{\frac{r}{|z_n|}}} V_{n,1} e^{\dis  v_{n,1}}= \lm_n +o(1)= \lm_\ii +o(1),\\
 v_{n,1}(\tfrac{z_n}{|z_n|})=v_n(z_n)+2(1+\bns)\log |z_n|\to+\infty,
\end{cases}
\end{align*}
where we used the fact that
$\tfrac{\epsilon_n}{|z_n|}\to \epsilon_{0,1}$, where $\epsilon_{0,1}\leq\tfrac{1}{\dt}$.\\
Possibly along a subsequence we can assume that $\tfrac{z_n}{|z_n|}\to \bar z,\;|\bar z|=1$ and then, as in Remark \ref{rem:reg},
the concentration-compactness theory (\cite{bm},\cite{yy}) for regular Liouville type equations applies to $ v_{n,1}$ in $B_R(\bar z)$ for any
$R\leq \frac12$ and in particular we have that,
\begin{equation*}
  \underset{\p B_R(\bar z)}\max v_{n,1}-\underset{\p B_R(\bar z)}\min v_{n,1}\leq C_R,\quad \forall \, R\leq\frac{1}{2},
\end{equation*}
whence $z_0$ is a simple blow up point (\cite{yy}), i.e.
$$
\int\limits_{B_R(\bar z)}  V_{n,1} e^{v_{n,1}}\to 8\pi,
$$
for any $R\leq \frac12$. Remark that for any $R\leq \frac14$ and $r\leq \frac14$ we have,
$$
\mbox{dist}(B_{R}(\tfrac{x_n}{\epsilon_n}),B_{r}(\tfrac{z_n}{|z_n|}))>0,
$$
for $n$ large enough, and then we have,
$$
16\pi>\lm_\ii\geq \liminf\limits_{n\to +\ii} \int\limits_{B_R(0)} V_n e^{\dis w_n}+
\liminf\limits_{n\to +\ii} \int\limits_{B_r(\bar z)}  V_{n,1} e^{\dis  v_{n,1}}=16\pi,
$$
which is a contradiction and thereby proves \eqref{hypolsI-A}. At this point, in view of \eqref{hypolsI-A} and since $\frac{\lm_\ii}{4\pi}\sg<1$,  we can apply Lemma \ref{LS-quant} with
$$
d_n=\frac{\eps_n}{4},
$$
to deduce that,
\begin{align*}
 \lambda_n=8\pi+o(1),
\end{align*}
that is,
\beq\label{decay}
\lm_\ii=8\pi \quad  \mbox{ and } \quad 2\alpha_{n,\sg}\to \frac{\lm_\ii}{2\pi}\sg =4\sg<2.
\eeq

Next, we use once more that, from \eqref{hypolsI-A}, for any $R>0$,
\begin{equation}\label{bdd:w}
\sup\limits_{B^{(n)}\setminus B_R(0)}w_n\leq C_R.
\end{equation}

In view of \eqref{osc:wnU}, for any $R>0$ we also have that,
\begin{equation*}
  \underset{\p B_R}\max\,w_n-\underset{\p B_R}\min\,w_n\leq C_R.
\end{equation*}
By Remark \ref{rem:reg} and by the pointwise estimates in \cite{yy} we have that,
\begin{equation}\label{profilew}
w_n(z)=\log\left(\dfrac{\theta_n^{2(1+\al_{n,\sg})}}
{\left(1+\gamma_n\theta_n^{2(1+\al_{n,\sg})}|z-\frac{x_n}{\epsilon_n}|^2\right)^2}\right)+O(1),\quad z\in B_{R}(0),
\end{equation}
for any $R>0$, where,
$$
\theta_n^{2(1+\al_{n,\sg})}=e^{\dis w_n(\tfrac{x_n}{\epsilon_n})}=e^{\dis v_n(x_n)}\epsilon_n^{2(1+\al_{n,\sg})}=
\left(\tfrac{\epsilon_n}{\delta_n}\right)^{2(1+\al_{n,\sg})}
$$
and $\gamma_n=\frac{V_{n}(\tfrac{x_n}{\epsilon_n})}{8}$. At this point,
via a careful adaptation of pointwise estimates in \cite{bclt} and \cite{bt2}, we deduce that,
\begin{equation}\label{profilew:H}
w_n(z)=\log\left(\dfrac{\theta_n^{2(1+\al_{n,\sg})}}
{\left(1+\gamma_n\theta_n^{2(1+\al_{n,\sg})}|z-\frac{x_n}{\epsilon_n}|^{\frac{M_n}{4\pi}}\right)^2}\right)+O(1),\quad z\in B^{(n)},
\end{equation}
where
\[
 M_n=\int_{B^{(n)}}V_n e^{\dis w_n}=\lm_n+o(1).
\]
To simplify the exposition, we postpone the details of the proof of \eqref{profilew:H} to the Appendix, see Section \ref{app:est-1}.

 At this point we wish to show that \eqref{profilew:H} still holds true with $M_n$ replace by $\lm_n$. Indeed, remark that for any fixed $|x|>0$, we deduce from \eqref{profilewn-n} that,
\beq\label{vn:cn}
v_n(x)=O(1)+m_n,
\eeq
while from \eqref{profilev:H} that,
$$
v_n(x)=2\log\left(\tfrac{\delta_n^{1+\alpha_{n,\sg}}}{\epsilon_n^{2(1+\al_{n,\sg})-\frac{M_n}{4\pi}}}\right)+O(1),
$$
that is
\begin{equation}\label{eps:I-A}
m_n= -2\log\left(\tfrac{\epsilon_n^{2(1+\al_{n,\sg})-\frac{M_n}{4\pi}}}{\delta_n^{1+\alpha_{n,\sg}}}\right) + O(1).
\end{equation}
Recalling \eqref{decay}, since $M_n=\lm_n+o(1)=8\pi+o(1)$ and $\alpha_{n,\sg}=2\sg +o(1)$, we also have that,
\beq\label{new:n}
\frac{\epsilon_n^{2(1+\al_{n,\sg})-\frac{M_n}{4\pi}}}{\delta_n^{1+\alpha_{n,\sg}}}=\frac{\epsilon_n^{(1+\al_{n,\sg})+1+2\sg -2 +o(1)}}{\delta_n^{1+\alpha_{n,\sg}}}=\theta_n^{(1+\bns)}\epsilon_n^{2\sg-1+o(1)}\to+\infty,
\eeq
where we use $2\sg<1$. At this point, because of \eqref{eps:I-A}, \eqref{new:n}, we see that,
\[
0\leq \lm_n-M_n= \int\limits_{\om\setminus B_r}H_n e^{\dis v_n(x)}\leq C_r\theta_n^{-2(1+\bns)}\epsilon_n^{2(1-2\sg)+o(1)}.
\]
As a consequence we also have that, for $R\leq |z|\leq \frac{1}{\eps_n}$,
$$
\left|\log\left(\dfrac{\left(1+\gamma_n\theta_n^{2(1+\al_{n,\sg})}|z-\frac{x_n}{\epsilon_n}|^{\frac{\lm_n}{4\pi}}\right)^2}
{\left(1+\gamma_n\theta_n^{2(1+\al_{n,\sg})}|z-\frac{x_n}{\epsilon_n}|^{\frac{M_n}{4\pi}}\right)^2}\right)\right|=
O(1)+\frac{1}{2\pi}({\lm_n}-{M_n})\log(|z|)\leq
$$
$$
O(1)-C_r\theta_n^{-2(1+\bns)}\epsilon_n^{2(1-2\sg)+o(1)}\log{\eps_n}=O(1),
$$
and then we see from \eqref{profilew:H} that in fact,
\begin{equation}\label{profilew:H-lm}
w_n(z)=\log\left(\dfrac{\theta_n^{2(1+\al_{n,\sg})}}
{\left(1+\gamma_n\theta_n^{2(1+\al_{n,\sg})}|z-\frac{x_n}{\epsilon_n}|^{\frac{\lm_n}{4\pi}}\right)^2}\right)+O(1),\quad z\in B^{(n)},
\end{equation}
which in particular in terms of $v_n$ takes the form \eqref{profilev:H}. This fact concludes the discussion of {CASE} (I).

\bigskip

Next, we discuss the following,
\begin{itemize}
 \item {\bf CASE} (II): there exists a subsequence such that $\tfrac{\epsilon_n}{t_n}\leq C$ and $\tfrac{|x_n|}{\delta_n}\to+\infty$.
\end{itemize}

In this case \eqref{epsilonxn} is satisfied (that is $\frac{\epsilon_n}{|x_n|}\leq C$)
and we define $D^{(n)}$, $\bar w_n$, as in \eqref{dn1}, \eqref{barwn:IB}. Therefore $\bar w_n$ is a solution of \begin{align}
\begin{cases} \label{eqwn:IB1}
-\D \bar w_{n}=\bar V_{n}(z)e^{\dis \bar w_{n}} \quad\text{in}\quad D^{(n)},\\
\bar V_{n}(z)=\left({\frac{\epsilon_n^2}{|x_{n}|^2}+|z|^2}\right)^{\bns}K_{n}(|x_{n}| z)
\to c_0(\bar\epsilon_0^2+|z|)^{2\bis}\;\mbox{in}\;C^t_{loc}(\R^2),\\
\int\limits_{D^{(n)}}\bar V_n e^{\dis \bar w_n}= \lm_n +o(1)= \lm_\ii +o(1),\\
\bar w_{n}(\tfrac{x_{n}}{|x_{n}|})=2(1+\bns)\log(\tfrac{|x_{n}|}{\delta_{n}})\to +\infty,
\end{cases}
\end{align}
for some $\bar \epsilon_0\geq 0$. Possibly along a subsequence we can assume that $\tfrac{x_{n}}{|x_{n}|}\to z_1,\;|z_1|=1$ and then as in Remark \ref{rem:reg},
the concentration-compactness theory (\cite{bm},\cite{yy}) for regular Liouville type equations can be applied
to $\bar w_{n}$ so that in particular \eqref{osc:wnU}  holds for $\bar w_{n}$ as well.
By using  the fact that,
$$
\int\limits_{D^{(n)}} \bar V_n e^{\dis \bar w_n}\leq \lm_\ii<16\pi,
$$
together with the (minimal mass) Lemma \ref{minimalmasslemma}, we first check that $z_1$ is the unique  blow up point in, say, $B_R$ for any $R\leq 5$. Thus, for any such $R$ and for any compact subset $U \Subset B_R\setminus\{ 0\}$
we have
\begin{equation*}
  \underset{U}\max\,\bar w_{n}-\underset{U}\min\,\bar w_{n}\leq C_U,
\end{equation*}
whence in particular $z_1$ is a simple blow up point (\cite{yy}), i.e.
\begin{equation}\label{hypolsI-B.1}
\int\limits_{B_R(z_1)} \bar V_{n} e^{\dis \bar w_{n}}\to 8\pi,
\end{equation}
for any $R\leq 4$. We want to show that there exists $C_{r}>0$ such that
\begin{equation}
\label{hypolsI-B}
 \sup_{B_{2r}\backslash B_{2|x_n|}}\{v_n(z)+2(1+\bns)\log |z|\}\leq C_{r}.
\end{equation}
By contradiction, we assume the existence of a sequence $z_n$ such that
\[
 v_n(z_n)+2(1+\bns)\log |z_n|\to+\infty, \,\,\text{as}\,\,n\to+\infty,
\]
\[
 \mbox{and }\quad 2|x_n|\leq |z_n|\leq 2r.
\]

Then, let us define
\[
 \bar v_n(z)= v_n(|z_n|z)+2(1+\bns)\log |z_n|
\]
which satisfies
\begin{align*}
\begin{cases}
-\D \bar v_{n}=\bar V_{n,1}(z)e^{\dis \bar v_{n}} \qquad\text{in}\,\,B_{\frac{r}{|z_{n}|}}(0),\\
\bar V_{n,1}(z)=\left({\frac{\epsilon_n^2}{|z_{n}|^2}+|z|^2}\right)^{\bns}K_n(|z_n|z)\to c_0|z|^{2\bis}\;\mbox{in}\;C^t_{loc}(\R^2),\\
\int_{B_{\frac{r}{|z_n|}}}\bar V_{n,1} e^{\dis \bar v_{n}}= \lm_n +o(1) = \lm_\ii +o(1),\\
\bar v_{n}(\tfrac{z_n}{|z_n|})=v_n(z_n)+2(1+\bns)\log |z_n|\to+\infty,
\end{cases}
\end{align*}
where we used the fact that
\[
 \tfrac{\epsilon_n}{|z_n|}=\tfrac{\epsilon_n}{|x_n|}\tfrac{|x_n|}{|z_n|}\leq\tfrac{1}{2}C.
\]

Possibly along a subsequence we can assume that $\tfrac{z_n}{|z_n|}\to \bar z,\;|\bar z|=1$ and then, as in Remark \ref{rem:reg},
the concentration-compactness theory (\cite{bm},\cite{yy}) for regular Liouville type equations applies to $\bar v_{n}$ in $B_R(\bar z)$ for any
$R\leq \frac12$ and in particular we have that,
\begin{equation*}
  \underset{\p B_R(\bar z)}\max\bar v_n-\underset{\p B_R(\bar z)}\min\bar v_n\leq C_R,\quad \forall \, R\leq\frac{1}{2},
\end{equation*}
whence $z_0$ is a simple blow up point (\cite{yy}), i.e.
$$
\int\limits_{B_R(\bar z)} \bar V_{n,1} e^{\bar v_n}\to 8\pi,
$$
for any $R\leq \frac12$. Remark that for any $R\leq \frac14$ and $r\leq \frac14$ we have,
$$
\mbox{dist}(B_{R|x_n|}(x_n),B_{r|z_n|}(z_n))\geq |x_n|\left(\frac{|z_n|}{|x_n|}(1-r)-(1+R)\right)=|x_n|(1-2r-R)>0,
$$
and then we have,
$$
16\pi>\lm_\ii\geq \liminf\limits_{n\to +\ii} \int\limits_{B_R(z_1)} \bar V_n e^{\dis \bar w_n}+
\liminf\limits_{n\to +\ii} \int\limits_{B_r(\bar z)} \bar V_{n,1} e^{\dis \bar v_{n}}=16\pi,
$$
which is a contradiction and proves \eqref{hypolsI-B}.
Thus, in view of \eqref{hypolsI-B.1} and \eqref{hypolsI-B} and since $\frac{\lm_\ii}{4\pi}\sg<1$, we can apply Lemma \ref{LS-quant} with
$$
d_n=\frac{|x_n|}{2},
$$
to deduce that,
\begin{align*}
 \lambda_n=8\pi+o(1),
\end{align*}
implying that \eqref{decay} is satisfied in this case as well.\\
At this point, in view of \eqref{hypolsI-B.1},
by the pointwise estimates in \cite{yy} we have that,
\begin{equation*}
\bar w_n(z)=\log\left(\dfrac{\bar \theta_n^{2(1+\al_{n,\sg})}}
{\left(1+\bar\gamma_n\bar \theta_n^{2(1+\al_{n,\sg})}|z-\frac{x_n}{|x_{n}|}|^2\right)^2}\right)+O(1),\quad z\in B_{R}(0),
\end{equation*}
where
$$
\bar \theta_n^{2(1+\al_{n,\sg})}=e^{\dis \bar w_n(\tfrac{x_n}{|x_{n}|})}=e^{\dis v_n(x_n)}|x_{n}|^{2(1+\al_{n,\sg})}=
\left(\tfrac{|x_{n}|}{\delta_n}\right)^{2(1+\al_{n,\sg})}
$$
and $\bar\gamma_n=\frac{\bar V_{n}(\tfrac{x_n}{|x_n|})}{8}$. Thus, we can argue as in CASE (I) above to deduce that,
\begin{equation}\label{profilew:H1}
 \bar w_n(z)=\log\left(\dfrac{\bar \theta_n^{2(1+\al_{n,\sg})}}
{\left(1+\bar\gamma_n\bar \theta_n^{2(1+\al_{n,\sg})}|z-\frac{x_n}{|x_n|}|^{\frac{M_n}{4\pi}}\right)^2}\right)+O(1),\quad z\in D^{(n)}.
\end{equation}
where
\[
 M_n=\int_{D^{(n)}}\bar V_n e^{\dis \bar w_n}=\lm_n+o(1).
\]

The proof is a step by step adaptation with minor changes of the one provided in section \ref{app:est-1} and we omit it here
to avoid repetitions. By arguing as in \eqref{vn:cn}, \eqref{eps:I-A}, \eqref{new:n}, we see that,
\begin{equation}\label{eps:II-A}
m_n= -2\log\left(\tfrac{|x_n|^{2\al_{n,\sg}}}{\delta_n^{1+\alpha_{n,\sg}}}\right) + O(1),
\end{equation}
and in particular that
$$
\frac{|x_n|^{2\al_{n,\sg}}}{\delta_n^{1+\alpha_{n,\sg}}}=
\left(\tfrac{|x_n|}{\delta_n}\right)^{1+\al_{n,\sg}}|x_n|^ {\alpha_{n,\sg}-1}=\left(\tfrac{|x_n|}{\delta_n}\right)^{1+\al_{n,\sg}}|x_n|^{2\sg-1+o(1)}\to +\ii,
$$
where we used $2\sg<1$. Remark that this shows as above that
\[
0\leq \lm_n-M_n= \int\limits_{\om\setminus B_r}H_n e^{\dis v_n(x_n)}\leq C_r\left(\bar \theta_n\right)^{-2(1+\bns)}|x_n|^{2(1-2\sg)+o(1)}.
\]
Therefore we deduce that \eqref{profilew:H1} holds true with $M_n$ replaced by $\lm_n$, in this case as
well, which in particular in terms of $v_n$ takes the form \eqref{profilev:H1}. This fact concludes the discussion of {CASE} (II).

\bigskip

At last we have the following,
\begin{itemize}
 \item {\bf CASE} (III): there exists a subsequence such that $\tfrac{\epsilon_n}{t_n}\leq C$ and $\tfrac{|x_n|}{\delta_n}\leq C$.
\end{itemize}
As in \eqref{epsilondeltan}, we have that
\[
 \frac{\epsilon_n}{\delta_n}\leq C.
\]
Recall that $D_n:=B_{\frac{r}{\delta_n}}(0)$ and observe that,
possibly along a subsequence, we can assume that \eqref{x0} and \eqref{epsilon_0} hold true.
Next we define $\widetilde w_n$ as in \eqref{phicaseIA} which satisfies,
\begin{align*}
\begin{cases}
 -\D \widetilde w_n=\widetilde V_n(y) e^{\dis \widetilde w_n} \quad\text{in}\quad D_n \\
\widetilde V_n(y)= \left({\frac{\epsilon_n^2}{\delta_n^2}+|y|^2}\right)^{\bns}
 K_n(\delta_ny)\to c_0\left(\epsilon_0^2+|y|^2\right)^{\bis}\;\mbox{in}\;C^t_{loc}(\R^2),\\
\int_{D_n}\widetilde V_n e^{\dis \widetilde w_n}=\lm_n+o(1)=\lm_\ii+o(1)\\
\widetilde w_n(y)\leq \widetilde w_n(\frac{x_n}{\delta_n})=0.
\end{cases}
\end{align*}

Clearly the concentration-compactness alternative of regular Liouville type equations (\cite{bm}) can be applied to $\widetilde w_n$
and we readily deduce that $\widetilde w_n$ is uniformly bounded in  $L^{\infty}_{\rm loc}(D_n)$.
Therefore, by standard elliptic estimates, possibly along a subsequence we have that $\widetilde w_n\to \widetilde w$ in $C^{2}_{loc}(\R^2)$,
where $\widetilde w$ is a solution of \eqref{profile:tildew}.
Recall that by assumption $\sg \lm_\infty< 4\pi$, that is $\bis=\frac{\lm_\ii}{4\pi}\sg< 1$, and then $\max\{8\pi,4\pi(1+\bis)\}=8\pi$. Therefore,
by Lemma \ref{massstrange} below, we have,
\begin{align}\label{tildebeta}
8\pi<\widetilde \beta \leq  8\pi(1+\bis).
\end{align}
However, as mentioned in the lines right before Lemma \ref{massstrange}, for fixed $\widetilde \beta$ in the range \eqref{tildebeta}, $\widetilde w$ is unique, radially symmetric and non degenerate.
Remark that, since $\lm_\ii\geq \widetilde \beta$, in view also of \eqref{mass:lminfty1}, we have,
\begin{align}\label{bt.first}
8\pi < \widetilde \beta\leq
 \graf{\lm_\ii\leq \frac{8\pi}{1-2\sg},\quad \sg\in (0,\frac14),\\$\,$\\ \lm_\ii<\frac{4\pi}{\sg},\quad \sg\in [\frac14,\frac12),}
\end{align}
whence in particular $\lm_\ii<16\pi$.
Therefore, we readily deduce that, for fixed $\sg\in(0,\frac12)$, there exist $\varepsilon_0>0$ and a sequence $R_n\to +\ii$ such that

\[
\|\tilde w_n - \tilde w\|_{C^{2}(B_{4R_n})}\to 0,
\]

\begin{equation}\label{masstilde0}
\lim\limits_{n\to +\ii}\int_{B_{4R_n}}\widetilde V_n e^{\dis \widetilde w_n} =\lim\limits_{n\to +\ii}\int_{B_{2R_n}}\widetilde V_n e^{\dis \widetilde w_n}= \tilde \beta\geq 8\pi+2\eps_0,
\end{equation}
and
\begin{equation*}
\int_{D_n\setminus B_{4 R_n}}\widetilde V_n e^{\dis \widetilde w_n}\leq \int_{D_n\setminus B_{2 R_n}}\widetilde V_n e^{\dis \widetilde w_n} \leq 8\pi-\varepsilon_0.
\end{equation*}

We want to show that there exist $C_{r}>0$ such that
\begin{equation}
\label{hypolsII-B}
 \sup_{B_{2r}\backslash B_{2R_n\dt_n}}\{v_n(z)+2(1+\bns)\log |z|\}\leq C_{r}.
\end{equation}
By contradiction, we assume the existence of a sequence $z_n$ such that
\[
 v_n(z_n)+2(1+\bns)\log |z_n|\to+\infty, \,\,\,\,\,\text{as}\,\,n\to+\infty,
\]
\[
 \mbox{and }\quad 2 R_n \dt_n\leq |z_n|\leq 2r.
\]

Then, let us define
\[
 \tilde v_n(z)= v_n(|z_n|z)+2(1+\bns)\log |z_n|,
\]
which satisfies
\begin{align*}
\begin{cases}
-\D  \tilde v_{n}= \tilde V_{n,1}(z)e^{\dis \tilde v_{n}} \qquad\text{in}\,\,B_{\frac{r}{|z_{n}|}}(0),\\
 \tilde V_{n,1}(z)=\left({\frac{\epsilon_n^2}{|z_{n}|^2}+|z|^2}\right)^{\bns}K_n(|z_n|z)
\to c_0|z|^{2\bis}\;\mbox{in}\;C^t_{loc}(\R^2),\\
\int_{B_{\frac{r}{|z_n|}}} \tilde V_{n,1} e^{\dis  \tilde v_{n}}= \lm_n +o(1) = \lm_\ii +o(1),\\
 \tilde v_{n}(\tfrac{z_n}{|z_n|})=v_n(z_n)+2(1+\bns)\log |z_n|\to+\infty,
\end{cases}
\end{align*}
where we used the fact that
\[
 \tfrac{\epsilon_n}{|z_n|}=\tfrac{\epsilon_n}{\dt_n}\tfrac{\dt_n}{|z_n|}\leq\tfrac{C}{2 R_n}\to 0.
\]
Possibly along a subsequence we can assume that $\tfrac{z_n}{|z_n|}\to \tilde z,\;|\tilde z|=1$ and then, as in Remark \ref{rem:reg},
the concentration-compactness theory (\cite{bm},\cite{yy}) for regular Liouville type equations applies to $\tilde v_{n}$ in $B_R(\tilde z)$ for any
$R\leq \frac12$ and in particular we have that,
\begin{equation*}
  \underset{\p B_R(\tilde z)}\max\tilde v_n-\underset{\p B_R(\tilde z)}\min\tilde v_n\leq C_R,\quad \forall \, R\leq\frac{1}{2},
\end{equation*}
whence $\tilde z$ is a simple blow up point (\cite{yy}), i.e.
$$
\int\limits_{B_R(\tilde z)} \tilde V_{n,1} e^{\tilde v_n}\to 8\pi,
$$
for any $R\leq \frac12$. Remark that for any $r\leq \frac14$ we have,
$$
\mbox{dist}(B_{R_n\dt_n}(x_n),B_{r|z_n|}(z_n))\geq R_n\dt_n
\left(\frac{|z_n|}{R_n |\dt_n|}(1-r)-(1+\frac{|x_n|}{R_n\dt_n})\right)\geq
$$
$$
R_n\dt_n(2\frac{3}{4}- 1+o(1))>0,
$$
and then we have,
$$
16\pi>\lm_\ii\geq \liminf\limits_{n\to +\ii} \int\limits_{B_{R_n\dt_n}(x_n)} \tilde V_{n} e^{\dis \tilde w_n}+
\liminf\limits_{n\to +\ii} \int\limits_{B_r(\tilde z)} \tilde V_{n,1} e^{\dis \tilde v_{n}}=16\pi,
$$
which is a contradiction and proves \eqref{hypolsII-B}. In view of \eqref{masstilde0} and  \eqref{hypolsII-B} and since $\frac{\lm_\ii}{4\pi}\sg<1$ we can apply Lemma \ref{LS-quant} with
$$
d_n=\frac{1}{2} R_n\dt_n,
$$
 to deduce that, by using also \eqref{tildebeta},
 \begin{equation}\label{mass-IIb}
 \lm_n=\widetilde \beta+o(1) \quad \mbox{and}\quad \lm_\ii= \widetilde \beta\in (8\pi,8\pi(1+\al_{\ii,\sg})].
 \end{equation}
 This fact immediately implies that we can refine \eqref{bt.first} as follows,
\begin{align*}
 \graf{8\pi < \widetilde \beta=\lm_\ii\leq \frac{8\pi}{1-2\sg},\quad \sg\in (0,\frac14),\\\,\\ 8\pi < \widetilde \beta=\lm_\ii<\frac{4\pi}{\sg},\quad \sg\in [\frac14,\frac12).}
\end{align*}
\bigskip

We claim that
\begin{equation}\label{profilewtilde:H1}
\widetilde w_n(y)=\widetilde U_n(y) + O(1),\quad y\in D_n,
\end{equation}
where, for any large $R\geq 1$ we have,
\begin{equation}\label{profilevtilde:H11}
\widetilde U_n(y;\widetilde M_n)=\graf{\widetilde w(y)+O(1),\quad |y|\leq R, \\ -\frac{\widetilde M_n}{2\pi}\log(|y|) + O(1),\quad R\leq  |y|\leq r\delta_n^{-1},}
\end{equation}
where
\[
 \widetilde M_n=\int_{D_{n}}\widetilde V_n e^{\dis \widetilde w_n}=\lm_n+o(1),
\]
and $\widetilde w$ is the unique solution of \eqref{profile:tildew}.
The argument is a somehow simpler version of what
done in the Appendix (see Section \ref{app:est-1}) about CASE (I) and much in the spirit of similar
well known estimates obtained in \cite{bclt} and \cite{tar-sd}, where one just replaces the radial profiles in
\cite{pt} with the profile \eqref{profilevtilde:H11}. Therefore, we omit the details of this proof here to avoid repetitions.\\
At this point we wish to show that \eqref{profilevtilde:H11} still holds true with  $\widetilde M_n$ replaced by $\lm_n$.
First of all, in terms of $v_n$, \eqref{profilewtilde:H1} takes the form,
\begin{equation}\label{profilevtilde:H1}
 v_n(x)=v_n(x_n)+ \widetilde U_n(\delta_n^{-1}x;\widetilde M_n)+O(1), \quad x\in B_{r}(0),
\end{equation}
and implies, in view of \eqref{mass-IIb} and \eqref{profilevtilde:H11}, that
$$
v_n(x)=(1-\frac{\widetilde M_n}{4\pi(1+\bns)})v_n(x_n)+O(1),\quad 2 |x|=r.
$$

On the other side from \eqref{profilewn-n} we have that,
\beq\label{vn:cn-IIb}
v_n(x)=O(1)+m_n,\quad 2 |x|=r,
\eeq
implying in turn by \eqref{masstilde0} that,
\beq\label{eps:II-B}
m_n=O(1)-\left(\frac{\widetilde \beta+o(1)}{4\pi(1+\bns)}-1\right)v_n(x_n)\to +\infty.
\eeq
Thus, again because of \eqref{vn:cn-IIb},  \eqref{masstilde0} and $\bis< 1$, we see that
\[
0\leq \lm_n-\widetilde M_n= \int\limits_{\om\setminus B_r}H_n e^{\dis v_n(x)}\leq C_r\dt_n^{ \frac{\tilde \beta}{2\pi}  -2(1+\bns)}\leq C_r\dt_n^{ 2\eps_0},
\]
implying that, for any $1\leq R\leq |y| \leq r\dt_n^{-1}$, we have that
$$
|\lm_n-\widetilde M_n||\log(|y|)|\leq C\dt_n^{ 2\eps_0}\log(\dt_n^{-1})=o(1).
$$
As a consequence, it is readily seen that \eqref{profilevtilde:H11} still holds true with $\widetilde M_n$ replaced by $\lm_n$,
which in particular in terms of $v_n$ takes the form
\eqref{profilevtilde:H1-IIb}. This fact concludes the discussion of CASE (III).
\finedim

\bigskip
\bigskip

\section{Free energy and existence of solutions: the Canonical Variational Principle}\label{sec4}
In this section we define the mean field  theory (\cite{clmp1},\cite{clmp2}) of the vortex model with singular sources.

\begin{definition}\label{piecewisec2} An open and bounded domain is said to be piecewise $C^2$ if it is of class $C^2$ but for a finite number of points
$\{Q_1,\cdots,Q_\ell\}\subset \pa\om$ such that:\\
(i) at each $Q_j$ is well defined the tangent cone, whose inner angle $\theta_j$ satisfies $0<\theta_j<2\pi,\,\theta_j\neq \pi$;\\
 (ii) at each $Q_j$ there exist $\eps_j>0$ and a univalent conformal map
$f_j\in C^{1}(B_{\eps_j}(Q_j)\cap\overline{\om};\mathbb{C})$, such that $f(\pa\om\cap {B}_{\eps_j}(Q_j))$ is the support of a curve of class $C^2$.
\end{definition}

Let $\O\subset\R^2$ be a bounded piecewise $C^2$ domain, $0\in \O$ and assume the existence of a fixed vortex whose strength is
$\sigma\in\R$ and whose center coincides with $0$. Let us define $\mathcal{P}(\O)$, $G[\rho]$, $\mathcal{S}(\rho)$, $\mathcal{E}(\rho)$ and $\mathcal{E}_\sg(\rho)$ as in the introduction.\\
By classical elliptic estimates (\cite{stampacchia}), the stream function relative to $\rho \in\mathcal{P}(\O)$, that is $\psi=G[\rho]$,
is in $W^{1,p}_0(\O)$ for any $p\in[1,2)$, and it is the unique solution of,
\begin{align*}
 \begin{cases}
  -\D \psi=\rho &\text{in}\,\, \O, \\
  \psi=0 &\text{on}\,\,\p\O.
 \end{cases}
\end{align*}
Moreover, by using in particular the fact that $\int_{\O}\rho\log \rho<+\infty$, it is well known (\cite{stampacchia2}) that
$\mathcal{E}_\sg(\rho)$ is well defined, $\psi\in L^{\infty}(\O)\cap W^{1,2}_0(\O)$ and also that
\begin{equation*}
 \mathcal{E}(\rho)=\tfrac{1}{2}\int_{\O}\rho G[\rho]\,dx=\tfrac{1}{2}\int_{\O}(-\D\psi)\psi\,dx=\tfrac{1}{2}\int_{\O}|\nabla\psi|^2\,dx,
\end{equation*}
which means that (one half of) the Dirichlet energy of $\psi$ is the energy. We define the (scaled) Free Energy functional,
\begin{align}
 \nonumber
 \mathcal{F}_{\lambda}(\rho)&= -\mathcal{S}(\rho)-\lambda(\mathcal{E}(\rho)-\mathcal{E}_\sg(\rho)) \\
 \nonumber
 &=-\mathcal{S}(\rho)-\tfrac{\lambda}{2}\int_{\O}\rho\, G[\rho]\,dx+\sigma\lambda\int_{\O}\rho \,G_\O(x,0)\,dx\\\label{free.en}
 &= \int_{\O}\rho\log(\rho)\,dx-\tfrac{\lambda}{2}\int_{\O}\rho\, G[\rho]\,dx-
 \tfrac{\sigma\lambda}{2\pi}\int_{\O}\rho\log|x|\,dx+\sigma\lambda\int_{\O}\rho \,R_\O(x,0)\,dx.
\end{align}

The energy of the uniform distribution $\rho_0=1$, a.e. in $\om$, is denoted
\begin{equation}\label{e0sec4}
E_0=E_0(\om)=\mathcal{E}(\rho_0)-\mathcal{E}_\sg(\rho_0)=\frac12\int\limits_{\om}|\nabla \psi_0|^2-\sg \int\limits_{\om}G_\om(x,0),
\end{equation}
where $\psi_0$ is the unique solution of
\begin{align*}
\begin{cases}
 -\D \psi_{0}=1\,\,\,\,\,\,&\text{in}\,\,\O,\\
\psi_{0}=0 &\text{in}\,\,\p\O,
\end{cases}
\end{align*}

\bigskip

A general useful tool in this context will be the classical Young inequality:
\begin{equation}\label{Young}
ab\leq e^a-1+(b\log(b)-b+1)\chi_{b\geq 1},\; a\geq 0, b\geq 0.
\end{equation}

\bigskip

The existence of the minimum of $\mathcal{F}_{\lambda}$ can be proved by various well known arguments, see for example \cite{clmp1} or \cite{csw}.
We provide a proof of this fact which emphasizes the connection between the free energy functional and the classical functional,
\begin{equation*}
 \mathcal{J}_{\lambda}(\psi)=\tfrac{\lambda}{2}\int_{\O}|\nabla \psi|^2\,dx-\log\left(\int_\O H_{\lm}e^{ \lambda \psi}\right),
\end{equation*}
where $\psi\in W^{1,2}_0(\O)$ and $H_\lm$ takes the form \eqref{Hlm:intro}, that is,
\begin{equation*}
 H_{\lm}(x)=|x|^{\frac{\lambda}{2\pi}\sigma}e^{-\sigma\lm R_\O(x,0)}=e^{-\sigma\lm G_\O(x,0)},
\end{equation*}
see also \cite{wolansky}. As far as \eqref{lsg} is satisfied, it is easy to see, by the H\"{o}lder and Moser-Trudinger (see \cite{moser} and \cite{as})
inequalities, that $\int_\O H_{\lm}e^{ \lambda \psi}$ is positive and $\mathcal{J}_{\lambda}(\psi)$ is well defined on $W^{1,2}_0(\O)$.  Let us define,
\[
 j(\lambda)=\underset{\psi\in W_0^{1,2}(\O)\backslash\{0\}}\inf \mathcal{J}_{\lambda}(\psi),
\]
we have the following,
\begin{lemma}
\label{lemmafunctionalequality}\hfill \\
Let $\sg\in\R$ and $\lm\in [0,\lm_\sg]$. Then we have:\\
 \begin{itemize}
  \item[(i)]
  For any $\rho\in\mathcal{P}(\O)$, let $\psi=G[\rho]$. Then $\psi\in W_0^{1,2}(\O)$ and,
  \begin{equation*}
\mathcal{F}_{\lambda}(\rho)\geq  \mathcal{J}_{\lambda}(\psi),
\end{equation*}
where the equality holds if and only if,
\begin{equation*}
 \rho=\frac{H_{\lm}e^{\lambda \psi}}{\int_\O H_{\lm}e^{\lambda \psi}}\,\,\text{a.e in}\,\,\O.
\end{equation*}
\item[(ii)]
For any $\psi\in W_0^{1,2}(\O)$, let $\rho=\tfrac{H_{\lm}e^{\lambda \psi}}{\int_\O H_{\lm}e^{\lambda \psi}}$.
Then $\rho\in\mathcal{P}(\O)$ and,
  \begin{equation*}
\mathcal{F}_{\lambda}(\rho)\leq  \mathcal{J}_{\lambda}(\psi),
\end{equation*}
where the equality holds if and only if,
\begin{equation*}
 \psi=G[\rho]\,\,\text{a.e in}\,\,\O.
 \end{equation*}
 \end{itemize}
In particular, if $\lambda\leq \lm_\sg$, then we have that,
\[
 f(\lambda)=j(\lambda)>-\infty,
\]
and if $\lambda<\lm_\sg$ then $\mathcal{J}_{\lambda}$ admits at least one minimizer $\psi_{\lm}$,
and any such minimizer is a solution of the mean field equation \eqref{mf:lm.intro},
where $\rho_{\lambda}$ is a minimizer of $\mathcal{F}_{\lambda}(\rho)$.
\end{lemma}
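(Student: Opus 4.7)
The plan is to prove (i) and (ii) by an explicit duality identity relating $\mathcal{F}_\lm$ and $\mathcal{J}_\lm$, and then to obtain the minimizer by the direct method on $\mathcal{J}_\lm$. For (i), I fix $\rho\in\mathcal{P}(\om)$, set $\psi:=G[\rho]\in W^{1,2}_0(\om)$ and $\widetilde\rho:=H_\lm e^{\lm\psi}/\int_\om H_\lm e^{\lm\psi}$. Using the identity $-\log H_\lm=\sg\lm G_\om(\cdot,0)$ from \eqref{Hlm:intro} together with $\int_\om\rho\,\psi=\int_\om|\nabla\psi|^2$, a direct rearrangement of the terms in \eqref{free.en} yields
\[
\mathcal{F}_\lm(\rho)-\mathcal{J}_\lm(\psi)=\int_\om\rho\log\!\Bigl(\tfrac{\rho}{\widetilde\rho}\Bigr)\,dx,
\]
and non-negativity of the right-hand side is Jensen's inequality applied to the convex function $t\mapsto t\log t$ (equivalently, non-negativity of the relative entropy between two probability densities on $\om$), with equality iff $\rho=\widetilde\rho$ a.e. For (ii), starting from $\psi\in W^{1,2}_0(\om)$, defining $\rho$ as in the statement and $\widetilde\psi:=G[\rho]$, a parallel expansion combined with $\int_\om\rho\,\psi=\int_\om\nabla\widetilde\psi\cdot\nabla\psi$ gives
\[
\mathcal{J}_\lm(\psi)-\mathcal{F}_\lm(\rho)=\tfrac{\lm}{2}\int_\om|\nabla(\psi-\widetilde\psi)|^2\,dx\geq 0,
\]
which vanishes iff $\psi=\widetilde\psi=G[\rho]$. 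Both identities require $\int_\om H_\lm e^{\lm\psi}<+\ii$ and $\rho\log\rho\in L^1(\om)$; the first follows from H\"older and the singular Moser--Trudinger inequality of \cite{as} under the necessary condition \eqref{lsg}, granted when $\lm\leq\lm_\sg$, and the second is controlled through \eqref{Young} and the same exponential bound.

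Taking the infimum over $\rho$ in (i) yields $f(\lm)\geq j(\lm)$, while the infimum over $\psi$ in (ii) yields the reverse inequality; hence $f(\lm)=j(\lm)$, and both are finite (as follows by testing against $\rho\equiv 1$). To establish existence of a minimizer of $\mathcal{J}_\lm$ when $\lm<\lm_\sg$, I apply the direct method on $W^{1,2}_0(\om)$. Weak lower semicontinuity of $\mathcal{J}_\lm$ is routine: the Dirichlet term is weakly lsc, and $\psi\mapsto\log\int_\om H_\lm e^{\lm\psi}$ is weakly continuous because Moser--Trudinger provides equi-integrability of $H_\lm e^{\lm\psi_n}$ along weakly converging sequences in $W^{1,2}_0(\om)$.

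The main obstacle is coercivity, and this is precisely where the threshold $\lm_\sg$ is sharp. Applying the singular Moser--Trudinger inequality of \cite{as} with weight $|x|^{2\al_\lm}$, $\al_\lm:=\sg\lm/(4\pi)$, and combining it with Young's inequality, I expect to derive
\[
\log\int_\om H_\lm e^{\lm\psi}\,dx\leq\frac{\lm^2}{16\pi(1+\al_\lm)^{+}}\|\nabla\psi\|_2^2+C_\lm,
\]
where $(1+\al_\lm)^{+}=1+\al_\lm$ if $\sg<0$ and $(1+\al_\lm)^{+}=1$ if $\sg\geq 0$. Substituting into $\mathcal{J}_\lm$ gives
\[
\mathcal{J}_\lm(\psi)\geq\tfrac{\lm}{2}\Bigl(1-\tfrac{\lm}{8\pi(1+\al_\lm)^{+}}\Bigr)\|\nabla\psi\|_2^2-C_\lm,
\]
which is coercive exactly when $\lm<\lm_\sg$, recovering both branches of \eqref{lambdasigma}. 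A minimizing sequence then admits a weakly converging subsequence whose limit $\psi_\lm$ is a minimizer by weak lsc. The Euler--Lagrange equation $d\mathcal{J}_\lm(\psi_\lm)=0$ is exactly \eqref{mf:lm.intro} for $\rho_\lm=H_\lm e^{\lm\psi_\lm}/\int_\om H_\lm e^{\lm\psi_\lm}$; by the equality case in (ii), $\mathcal{F}_\lm(\rho_\lm)=\mathcal{J}_\lm(\psi_\lm)=j(\lm)=f(\lm)$, so $\rho_\lm$ is also a minimizer of $\mathcal{F}_\lm$.
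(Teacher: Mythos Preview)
Your proof is correct and follows essentially the same route as the paper: the relative-entropy identity $\mathcal{F}_\lm(\rho)-\mathcal{J}_\lm(\psi)=\int_\om\rho\log(\rho/\widetilde\rho)$ for (i), the Dirichlet-gap identity $\mathcal{J}_\lm(\psi)-\mathcal{F}_\lm(\rho)=\tfrac{\lm}{2}\|\nabla(\psi-G[\rho])\|_2^2$ for (ii), coercivity of $\mathcal{J}_\lm$ via the singular Moser--Trudinger inequality of \cite{as} for $\lm<\lm_\sg$, and the direct method. Your duality argument for $f(\lm)=j(\lm)$ (taking the infimum in (i) and (ii) separately) is in fact slightly cleaner than the paper's, since it does not rely on the existence of a minimizer and therefore covers the endpoint $\lm=\lm_\sg$ directly.
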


\begin{proof}
Let $\rho\in\mathcal{P}(\O)$ and $\psi=G[\rho]$. We denote by $\mathcal L\log\mathcal L(\O)$ the Orlicz space of those measurable functions
which satisfy $\int_{\O}|f|\log(e+ |f|)<+\infty$.
It is well known (\cite{kjf}, \cite{RR}) that $W_0^{1,2}(\O)\hookrightarrow(\mathcal L\log\mathcal L(\O))'$ is continuous and consequently we see that
\begin{equation}\label{en:free}
 \int_\O \rho \psi=\int_\O \rho\, G[\rho]=\int_\O|\nabla \psi|^2,
\end{equation}
and in particular that $\psi\in W_0^{1,2}(\O)$. Actually it is also well known that $\psi$ is continuous, see \cite{Fer}.\\
By the H\"{o}lder and Moser-Trudinger inequalities (\cite{moser}), it is well defined
\[
 \rho_{\psi}:=\frac{H_{\lm}e^{\lambda \psi}}{\int_\O H_{\lm}e^{\lambda \psi}}
\]
and $\rho_{\psi}\in\mathcal{P}(\O)\cap L^{p}(\O)$, for some $p>1$ depending on $\sg$. Here we are using  the fact that
$\lm_\sg=\frac{8\pi}{1+2|\sg|}<\frac{4\pi}{|\sg|}$, as far as $\sg<0$, see Remark \ref{rem:conicsing}. Thus we have that,
\begin{align*}
 \mathcal{J}_{\lambda}(\psi)&=\tfrac{\lambda}{2}\int_{\O}|\nabla \psi|^2\,dx-\log\left(\int_\O H_{\lm}e^{ \lambda \psi}\right) \\
 &=\lambda\int_{\O}|\nabla \psi|^2\,dx-\tfrac{\lambda}{2}\int_{\O}|\nabla \psi|^2\,dx-\int_\O \rho\log\left(\int_\O H_{\lm}e^{ \lambda \psi}\right)\,dx\\
 &=\int_\O \rho\left[\lambda \psi-\log\left(\int_\O H_{\lm}e^{ \lambda \psi}\right)\right]-\tfrac{\lambda}{2}\int_{\O}|\nabla \psi|^2\,dx \\
 &=\int_\O \rho\log (\rho_{\psi})+\sigma\lambda\int_\O\rho G_\O(x,0)\,dx-
 \tfrac{\lambda}{2}\int_{\O}|\nabla \psi|^2\,dx.
\end{align*}
Hence, in view of \eqref{free.en} and \eqref{en:free}, we deduce that,
\begin{equation*}
 \mathcal{F}_{\lambda}(\rho)-\mathcal{J}_{\lambda}(\psi)=\int_{\O}\rho\log\left(\tfrac{\rho}{\rho_{\psi}}\right)\,dx.
\end{equation*}
On the other hand, by applying the Jensen inequality to the strictly convex function $k(t)=t\log(t)$ with $t=\tfrac{\rho}{\rho_{\psi}}$, we have that,
\[
 \int_{\O}\rho\log\left(\tfrac{\rho}{\rho_{\psi}}\right)\,dx=\int_{\O}\rho_{\psi}\tfrac{\rho}{\rho_{\psi}}
 \log\left(\tfrac{\rho}{\rho_{\psi}}\right)\,dx\geq \left(\int_{\O}\rho_{\psi}\tfrac{\rho}{\rho_{\psi}}\right)
 \log\left(\int_{\O}\rho_{\psi}\tfrac{\rho}{\rho_{\psi}}\right)=0
\]
and the equality holds if and only if $\tfrac{\rho}{\rho_{\psi}}$ is constant a.e in $\O$.
Actually, $\rho$ and $\rho_{\psi}$ belong to $\mathcal{P}(\O)$, so this is equivalent to say that $\rho=\rho_{\psi}$ a.e. in $\O$, 
which proves (i).\\

 Next let us fix $\psi\in W_0^{1,2}(\O)$ and define
 \[
  \rho_{\psi}:=\frac{H_{\lm}e^{\lambda \psi}}{\int_\O H_{\lm}e^{\lambda \psi}}.
 \]
As discussed in (i),  $\rho_{\psi}\in\mathcal{P}(\O)\cap L^p(\O)$, for some $p>1$ and we define
let $\phi=G[\rho_{\psi}]\in  W_0^{1,2}(\O)\cap W_0^{2,p}(\O)$. Therefore, we have that
\[
 \int_{\O}\rho_{\psi}\phi=-\int_\O \phi\D \phi=\int_\O|\nabla \phi|^2,
\]
and
\[
 \int_{\O}\rho_{\psi}\psi=-\int_\O \psi\D \phi=\int_\O\nabla \psi\cdot\nabla \phi.
\]
Thus, it is readily seen that,
\[
 \mathcal{F}_{\lambda}(\rho_{\psi})-\mathcal{J}_{\lambda}(\psi)=-\tfrac{\lambda}{2}\int_{\O}|\nabla(\psi-\phi)|^2\leq 0
\]
where the equality holds if and only if $\psi-\phi\in W_0^{1,2}(\O)$ is constant, which means $\psi=\phi(=G[\rho_{\psi}])$ a.e. in $\O$.
This fact concludes the proof of (ii).\\

At this point, if $\sg>0$ and $\lambda\in(0,8\pi]$, by using the Moser-Trudinger inequality (\cite{moser})
we have that $j(\lambda)>-\infty$. In particular, if $\lambda\in(0,8\pi)$,
$\mathcal{J}_{\lambda}$ is also coercive.
If $\sg>0$, putting $\al_\sg=\frac{\sg\lm}{4\pi}$, as far as \eqref{lsg} is satisfied (i.e. $|\al_\sg|<1$) by the singular Moser-Trudinger
inequality (\cite{as}) we find that,
\begin{align*}
\mathcal{J}_{\lambda}(\psi)=&\tfrac{\lambda}{2}\int_{\O}|\nabla \psi|^2\,dx-\log\left(\int_\O H_{\lm}e^{ \lambda \psi}\right)\geq \\
=&\tfrac{\lambda}{2}\int_{\O}|\nabla \psi|^2\;dx-C(\om,\al_\sg)-\frac{\lm^2}{16\pi(1+\al_{\sg})}\int_{\O}|\nabla \psi|^2\\
=&-C(\om,\al_\sg)+\tfrac{\lambda}{2}\int_{\O}|\nabla \psi|^2\;dx\left(\frac{8\pi-(2|\sg|+1)\lm}{8\pi+2\sg\lm}\right)
\end{align*}
Hence, if $\lambda\in(0,\frac{8\pi}{1+2|\sg|}]$ then $j(\lambda)>-\infty$ and, if $\lambda\in(0,\frac{8\pi}{1+2|\sg|})$, it is also coercive.\\
Therefore we deduce that, as far as \eqref{lsg} is satisfied and $\lm<\lm_\sg$, then at least one minimizer exists by the direct method.
Remark however that if $\sg<0$ then $\frac{8\pi}{1+2|\sg|}<\frac{4\pi}{|\sg|}$, whence \eqref{lsg} is granted and we can let
$\psi_{\lambda}$ be any such minimizer which in particular is a solution of \eqref{mf:lm.intro}. However at this point
we see  that to define $\rho_{\lambda}$ as in \eqref{mf:lm.intro} is exactly the same as to define it as we did in (ii) above.
Consequently it is not difficult to see from (ii) and (i) that
$$
f(\lambda)=\mathcal{F}_{\lambda}(\rho_{\lambda})=\mathcal{J}_{\lambda}(\psi_{\lambda})=j(\lambda).
$$
\end{proof}

In view of the Lemma we can prove Theorem \ref{thm:cvpintro}.\\
{\it The proof of Theorem \ref{thm:cvpintro}}\\
For $\lm=0$, as a consequence of the Jensen inequality, it is readily seen that $\rho_0=1$ a.e. in $\om$ (recall $|\om|=1$)
is the unique minimizer of
$\mathcal{F}_{0}$ and in particular $f(0)=0$. Therefore we assume w.l.o.g. that $\lm\in (0,\lm_\sg)$.\\
In view of Lemma \ref{lemmafunctionalequality}, for any
solution $\rho_{\lm}$ of the \eqref{cvp:intro} we have that $\psi_{\lm}=G[\rho_\lm]$ is a solution of \eqref{mf:lm.intro}.
At this point the claim about uniqueness follows from the uniqueness results in
\cite{bl1} for $\sg>0$ and from those in \cite{BJL} and \cite{BCJL} for $\sg<0$.
Remark that those uniqueness results are obtained for \eqref{mf:lm.intro} but for the fact that the singular part of the weight function takes
the form $|x|^{2\al}$, where the order of the exponent $\alpha$ is kept fixed, while here we have $\al=\al_\sg=\frac{\lm}{4\pi}\sg$.
However it is not difficult to see that for $\sg>0$ the uniqueness threshold is still $8\pi$,
while for $\sg<0$ the relevant information is just the inequality
$\lm=\lm\int\limits_{\om}\rho_{\lm}\leq 8\pi(1+\frac{\lm}{4\pi}\sg)$, which is granted in this case once more by the fact that $\lm<\lm_\sg=\frac{8\pi}{1-2\sg}$. \finedim

\bigskip

\section{Entropy and existence of solutions: the Microcanonical Variational Principle, domains of  type I/II.}
Let $\O\subset\R^2$ be a bounded piecewise $C^2$ domain, we take the same notations adopted in the introduction and in  section \ref{sec4} about
$\mathcal{P}(\O)$, $\mathcal{P}_E(\O)$
$G[\rho](x)$, the entropy
$\mathcal{S}(\rho)$, the energy of the density $\rho$, $\mathcal{E}(\rho)$,
and of the fixed vortex, $\mathcal{E}_\sg(\rho)$. As already mentioned in the introduction, 
we consider only those states whose energy is larger than the energy $E_0=E_0(\om)$ of the uniform distribution $\rho_0=1$, a.e. in $\om$, see 
\eqref{e0sec4}.\\
Let us recall the regularized version of the energy
\begin{equation}\label{en:reg}
\mathcal{E}_{\sg,n}(\rho)=\sg\ino \rho G_n=-\frac{\sg}{2\pi}\ino \rho \log(h_n)+\sg\ino \rho R_{n}(x),
\end{equation}
where $G_n$ has been defined in \eqref{def:Gn},
and set
$$
E_{0,n}:=\mathcal{E}(\rho_0)-\mathcal{E}_{\sg,n}(\rho_0).
$$
We first solve the following regularized (MVP),
\begin{align}\label{mvp:n}
 S_n(E)=\underset{\rho\in\mathcal{P}^{(n)}_E(\O)}\sup \mathcal{\mathcal{S}(\rho)},
\end{align}
\begin{equation*}
 \mathcal{P}^{(n)}_E(\O)=\left\{\rho\in \mathcal{P}(\O)|\, \mathcal{E}(\rho)-\mathcal{E}_{\sg,n}(\rho)=E\right\}.
\end{equation*}

Here $H_n$ takes the form of the weight defined in \eqref{Hlmn.intro}.
\begin{theorem}\label{thm:mvpn} For any $E\in [E_{0,n},+\ii)$ there exists at least one density $\rho$ which maximizes $\mathcal{S}$ at fixed energy
$$
\mathcal{E}(\rho)-\mathcal{E}_{\sg,n}(\rho)=E,
$$
and there exists $\lm_n=\lm_n(E)$ such that $\rho=\rho_{\lm_n}$ is a solution of,
\begin{equation}\label{mvp}
\rho_{\lm_n}=\dfrac{H_{n} e^{\lm_n G[\rho_{\lm_n}]}}{\int\limits_{\om} H_{n} e^{\lm_n G[\rho_{\lm_n}]}}\quad \mbox{a.e. in }\om.
\end{equation}
In particular $\psi_{\lm_n}=G[\rho_{\lm_n}]$ is a solution of the regularized Mean Field Equation \eqref{mf:n.intro}.
\end{theorem}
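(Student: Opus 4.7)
The plan is to apply the direct method of calculus of variations. If $E=E_{0,n}$, Jensen's inequality forces $\rho_0\equiv 1$ to be the unique maximizer, trivially satisfying \eqref{mvp} with $\lambda_n=0$; so assume $E>E_{0,n}$. Take a maximizing sequence $\rho_k\in\mathcal{P}^{(n)}_E(\Omega)$ with $\mathcal{S}(\rho_k)\to S_n(E)$. Since $|\Omega|=1$, Jensen gives $\mathcal{S}(\rho)\leq 0$, so $\int_\Omega\rho_k\log\rho_k$ is uniformly bounded; the de la Vallée Poussin criterion yields equi-integrability and the Dunford--Pettis theorem produces $\rho^*\in L^1(\Omega)$ with $\rho_k\wk\rho^*$ in $L^1(\Omega)$ along a subsequence. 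Non-negativity and unit mass pass to the weak limit, so $\rho^*\in\mathcal{P}(\Omega)$.

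Next I would show that $\rho^*$ is admissible with $\mathcal{S}(\rho^*)=S_n(E)$. Convexity of $t\mapsto t\log t$ gives $\mathcal{S}(\rho^*)\geq\limsup\mathcal{S}(\rho_k)=S_n(E)$ by weak lower semicontinuity. Because $h_n\geq\epsilon_n>0$ and $R_n$ is harmonic and bounded, $G_n\in C(\overline\Omega)$, so the linear functional $\rho\mapsto\mathcal{E}_{\sigma,n}(\rho)$ is weakly continuous on $L^1$. For the quadratic term, standard elliptic theory together with Rellich--Kondrachov yields that $\rho\mapsto G[\rho]$ is compact from $L^1$ into $L^q$ for every $q<\infty$, so $G[\rho_k]\to G[\rho^*]$ strongly; combined with the equi-integrability of $\rho_k$ this suffices to pass to the limit in $\int\rho_k\,G[\rho_k]$, producing $\mathcal{E}(\rho_k)\to\mathcal{E}(\rho^*)$. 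Hence $\rho^*\in\mathcal{P}^{(n)}_E(\Omega)$ and achieves the supremum.

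To derive \eqref{mvp}, the key preliminary step is to prove $\rho^*>0$ a.e.\ in $\Omega$: otherwise, since $-\log t\to+\infty$ as $t\to 0^+$, one can strictly raise the entropy by transferring a small amount of mass onto a subset of $\{\rho^*=0\}$, rebalancing the energy via the continuity of $G_n$ and $G[\rho^*]$; this is the argument of the forthcoming Proposition \ref{rho:0}. Once $\rho^*>0$ a.e., admissible variations $\rho_\tau=\rho^*+\tau(\phi-\rho^*\int_\Omega\phi)$ together with the Lagrange multiplier rule provide $\mu,\lambda_n\in\R$ with
\[
\log\rho^*(x)+1=\mu+\lambda_n\bigl(G[\rho^*](x)-\sigma G_n(x)\bigr)\quad\text{a.e.\ in }\Omega,
\]
where non-degeneracy of the Lagrange multiplier is guaranteed by $E>E_{0,n}$: if instead the derivative of the energy constraint vanished on all admissible directions, then $G[\rho^*]-\sigma G_n$ would be constant a.e., which combined with the previous identity would force $\rho^*\equiv 1$, contradicting $E>E_{0,n}$. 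Exponentiating and imposing $\int_\Omega\rho^*=1$ yields \eqref{mvp} with $H_n=e^{-\sigma\lambda_n G_n}$, and $\psi_{\lambda_n}:=G[\rho^*]$ solves \eqref{mf:n.intro} by elliptic regularity.

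The main obstacle is establishing the positivity $\rho^*>0$ a.e., which is indispensable both for $\log\rho^*$ to be defined pointwise and for the Lagrange multiplier computation to make sense; all the other ingredients (lower semicontinuity of the entropy, continuity of the regularized vortex energy, compactness of the Green operator) are standard thanks to the regularization $h_n\geq\epsilon_n>0$ that makes $G_n$ bounded. Note that the same strategy, without the regularization, would fail because the energy constraint would not be closed in the weak-$L^1$ topology, which is precisely why one is led to work first at level $n$ and then pass to the limit as $n\to+\infty$ in Theorem \ref{thm:mvp}.
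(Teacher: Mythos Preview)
Your proposal is correct and follows essentially the same route as the paper: direct method via Dunford--Pettis, weak upper semicontinuity of the entropy, weak continuity of the regularized energy constraint (exploiting that $G_n\in C(\overline\Omega)$), positivity of the maximizer via Proposition~\ref{rho:0}, and then the Lagrange multiplier rule. The only minor addition is your explicit non-degeneracy check for the multiplier, which the paper omits; note, however, that your argument there is slightly circular (you invoke ``the previous identity'' to rule out the degenerate case, but that identity is precisely what fails if $\mu_0=0$)---the clean way is to observe that degeneracy would force $G[\rho^*]-\sigma G_n$ to be constant, hence zero on $\partial\Omega$, hence $\rho^*=-\sigma\Delta G_n$, which generically violates the unit-mass constraint.
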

\begin{proof}
By applying the Jensen inequality to the strictly concave function $-t\log(t)$ with $t=\int\limits_{\om}\rho$ we have
$$
\mathcal{S}(\rho)=-\int\limits_{\om}\rho\log(\rho)\leq -\left(\int\limits_{\om}\rho\right)\log\left(\int\limits_{\om}\rho\right)=0,
$$
where the equality holds if and only if $\rho=\rho_0=1$ a.e.. Therefore there exists only one entropy maximizer at energy $E_0$, which clearly
satisfies \eqref{mvp} with $\lm=0$. Obviously $\psi=G[\rho_0]\equiv \psi_0$ solves \eqref{mf:lm.intro}.

\medskip

Let $E>E_0$, it is not difficult to prove that
$\mathcal{P}_E(\O)\neq \emptyset$ and since $-t\log(t)\leq e^{-1}$ we have $S_n(E)<+\ii$. Let us denote by $\rho_k$ any maximizing sequence which
in particular satisfies
\begin{equation}\label{llogl}
\int\limits_{\om}\rho_k\log(\rho_k)\leq C.
\end{equation}
Therefore we have
$$
\sup\limits_{k\in \N}\int\limits_{\rho_k\geq m} \rho_k\leq\frac{1}{\log(m)} \ino \rho_k|\log(\rho_k)| \leq \frac{C}{\log(m)},\to 0,\;m\to +\ii,
$$
implying that $\{\rho_k\}$ is equi-integrable. Since $\int\limits_{\om}\rho_k=1$, by the Dunford-Pettis criterion we can pass to a subsequence,
still denoted $\rho_k$, which converges in the weak-$L^1(\om)$ topology to some $\rho\in L^1(\om)$ with $\int\limits_{\om}\rho=1$.
Since the entropy is concave and upper semicontinuous w.r.t. the strong ${L}^1(\om)$-topology, then it is upper semicontinuous w.r.t. the weak
${L}^1(\om)$-topology implying that $S(E)=\mathcal{S}(\rho)$. Therefore in particular
\begin{equation*}
\int\limits_{\om}\rho \log(\rho)<+\ii,
\end{equation*}
and we claim that $\mathcal{E}(\rho)-\mathcal{E}_{\sg,n}(\rho)=E$. In fact, since $\rho_k \to \rho$ in the weak-$L^1(\om)$ topology,
then obviously in view of \eqref{en:reg} we have $\mathcal{E}_{\sg,n}(\rho_k)\to \mathcal{E}_{\sg,n}(\rho)$. By using \eqref{llogl}, the convergence
$\mathcal{E}(\rho_k)\to \mathcal{E}(\rho)$ can be proved as in \cite{clmp2}. \\
We are left to prove that $\rho=\rho_{\lm_n}$ solves \eqref{mvp} for some $\lm_n=\lm_n(E)\in \R$. By arguing at a formal level,
we would infer by the Lagrange multiplier rule that $\rho$ is a free critical point of the Lagrangian,
$$
\mathcal{S}(\rh)+\lm_n (\mathcal{E}(\rho)-\mathcal{E}_{\sg,n}(\rho))-\tau \ino \rho.
$$
However, since $\mathcal{P}^{(n)}_E(\O)$ is a subset of the cone of non-negative densities $\rho\geq 0$,
then to make the argument rigorous we have to prove first that $\rho$ stays far from the boundary of the cone, that is,
$\rho>0$ a.e. in $\om$. This could be done in principle by a subtle optimization trick as in \cite{clmp2}. We need another argument here
which is more in the spirit of the calculus of variations, showing that if $\rho=0$ on a set of positive measure, then
there exists some curve of densities in $\mathcal{P}^{(n)}_E(\O)$ along which the entropy is strictly increasing. Besides the independent interest,
we will need this argument again in the proof of Theorem \ref{thm:mvp} below, where in a similar situation we will not have the chance to
adopt the optimization trick as in \cite{clmp2}.
\begin{proposition}\label{rho:0}
Let $\rho$ be a maximizer of $\mathcal{S}$ at fixed energy $\mathcal{E}(\rho)-\mathcal{E}_{\sg,n}\sg(\rho)=E$. Then $\rho>0$ a.e. in $\om$.
\end{proposition}
\begin{proof}
We argue by contradiction and suppose that there exists $\om_0\subset \om$ such that $|\om_0|\in (0,|\om|)\equiv (0,1)$ and $\rho=0$ a.e. on $\om_0$.
Let $m>0$ be any positive number such that $|\{\rho > m\}|\neq 0$ and set $\om=\om_1\cup\om_1^c$, where $\om_1=\{\rho \leq  m\}$. Clearly
$\om_0\subseteq \om_1$. Suppose that
we could find a smooth map $f_\eps:[0,1)\to L^{\ii}(\om)$ satisfying the following properties:
$$
f_0=0,\quad f_\eps=f_{\eps,0}+f_{\eps,1},\quad f_{\eps,0} \equiv \hspace{-.35cm}\setminus\;0,\quad f_{\eps,0}(x)\geq 0\;\;\mbox{a.e. in }\; \om_0,
$$
$$
\mbox{\rm supp}(f_{\eps,0})\subseteq \om_0,
\;\mbox{\rm supp}(f_{\eps,1})\subseteq \om_1^c,
$$
where
$$
\ino (\rho+f_{\eps})=1,\;\mathcal{E}(\rho+f_{\eps})-\mathcal{E}_{\sg,n}(\rho+f_{\eps})=E,\quad \fo\; \eps\in [0,1).
$$

In particular we assume that we could find $f_\eps$ such that,
$$
f_{\eps,0}=\eps g_0+\mbox{\rm o}_x(\eps), \quad f_{\eps,1}=\eps g_1+\mbox{\rm o}_x(\eps),
\quad g_i \not\equiv 0,\;i=1,2, \quad \inf\limits_{\om_0} g_0\geq 1,
$$
where $\mbox{\rm o}_x(1)$ and $\mbox{\rm o}_x(\eps)$ will denote any function satisfying,
$$
\lim\limits_{\eps\to 0^+}\sup\limits_{\om}{|\mbox{\rm o}_x(1)|} =0,\quad\mbox{and}\quad
\lim\limits_{\eps\to 0^+}\sup\limits_{\om}\frac{|\mbox{\rm o}_x(\eps)|}{\eps} =0.
$$

If any such $f_\eps$ exists, then the proof is concluded observing that
$\rho+f_\eps$ is an admissible variation of $\rho$ so that we have
$
\mathcal{S}(\rho+f_{\eps})\leq \mathcal{S}(\rho),
$
which reads as follows,
$$
-\int\limits_{\om_0}f_{\eps,0}\log (f_{\eps,0})-\int\limits_{\om_{1}\setminus \om_0}\rho\log (\rho)-
\int\limits_{\om_{1}^c}(\rho+f_{\eps,1})\log (\rho+f_{\eps,1})\leq
$$
$$
-\int\limits_{\om_{1}\setminus \om_0} \rho \log \rho -
\int\limits_{\om_{1}^c}\rho \log \rho,\;\fo\,\eps\in [0,1),
$$
that is,
$$
-\int\limits_{\om_0}	\eps (g_0+\mbox{\rm o}_x(1) )\log (\eps (g_0+\mbox{\rm o}_x(1) ) )
$$
$$
-\int\limits_{\om_{1}^c}(\rho+\eps (g_{1}+\mbox{\rm o}_x(1) ))\log
(\rho+\eps (g_{1}+\mbox{\rm o}_x(1) ))+\int\limits_{\om_{1}^c}\rho \log \rho\leq 0.
$$
Since $\rho\geq m $ in $\om_{1}^c$, then we have,
$$
-\int\limits_{\om_{1}^c}(\rho+\eps (g_{1}+\mbox{\rm o}_x(1) ))\log (\rho+\eps (g_{1}+\mbox{\rm o}_x(1) ))+
\int\limits_{\om_{1}^c} \rho \log \rho=
$$
$$
\eps\left(\int\limits_{\om_{1}^c}(-1-\log(\rho))g_1\right) + \mbox{\rm o}(\eps),
$$

and so we find that,
$$
-\int\limits_{\om_0}(g_0+\mbox{\rm o}_x(1) )\log (\eps (g_0+\mbox{\rm o}_x(1) ) ) +
\int\limits_{\om_{1}^c}(-1-\log(\rho))f_1 \leq  \mbox{\rm o}(1),
$$
which is the desired contradiction, since we also have,
$$
-\int\limits_{\om_0}(g_0+\mbox{\rm o}_x(1) )\log (\eps (g_0+\mbox{\rm o}_x(1) ) )=
-\left(\;\int\limits_{\om_0}(g_0+\mbox{\rm o}_x(1) )\right)\log(\eps)
$$
$$
+\mbox{\rm o}(1) -\int\limits_{\om_0}g_0\log (g_0) \to +\ii,\mbox{ as }\eps\to 0.
$$

Therefore we are left to show that a variation $f_\eps$ with the required properties exists.
In particular we seek,
$$
f_\eps=\eps (g_0 + g_{1}) +\mbox{o}_x(\eps),\mbox{ as }\eps\to 0,
$$
where $g_0\geq 1$ and $g_{1}$ are $L^{\ii}(\om)$ functions supported in $\om_0,\om_1$, respectively. According to \eqref{def:Gn} let us set
$G_n(x)=-\frac{1}{2\pi}\log(h_n)+R_{n}(x)$. We claim that it is enough to prove that we can choose
$g_0,g_1$ in such a way that the following holds true:
\begin{equation}\label{eq:g}
\ino (g_0+g_1)=0,\quad \ino (G[\rho]-\sg G_n)(g_0+g_1)=0.
\end{equation}

Indeed, once we have $g_0,g_1$ satisfying \eqref{eq:g}, then $f_\eps$ is found as the unique solution of the Cauchy problem,
$$
\graf{\frac{d f_t}{dt}=\phi-a_1(t) [G[\rho+f_t]-\sg G_n], \\ f_0=0,}
$$
where
$$
\phi=g_0+g_1,\quad a_1(t)=\frac{\ino \phi [G[\rho+f_t]-\sg G_n]}{\ino [G[\rho+f_t]-\sg G_n]^2},\quad [\phi]=\phi -\ino \phi.
$$
In fact, local existence and uniqueness of the Cauchy problem follows by standard ODE techniques in Banach spaces, while by a straightforward
evaluation one can see that
$$
\frac{d}{dt} \ino(\rho+f_t)=0\quad \mbox{and} \quad \frac{d}{dt}\left(\mathcal{E}(\rho+f_t)-\mathcal{E}_{\sg,n}(\rho+f_t)\right)=0,
$$
that is, the velocity field is defined in such a way that $f_t\in \mathcal{P}_E$ for any $t$.\\
To simplify the remaining part of the argument let us set,
$$
\ov{G}=G[\rho]-\sg G_n(x).
$$
If there exists $g_0\in L^{\ii}(\om_0)$, $g_0\geq 1$ and $g_2\in L^{\ii}(\om_1)$, $g_2\equiv \hspace{-.35cm}\setminus\;0$ such that,
$$
\frac{\int_{\om_1}\ov{G} g_2}{\int\limits_{\om_1} g_2}=\frac{\int_{\om_0}\ov{G} g_0}{\int\limits_{\om_0} g_0},
$$

then, setting $g_1=-\frac{\int_{\om_0} g_0}{\int\limits_{\om_1} g_2}g_2$, the pair $\{g_0,g_1\}$ solves \eqref{eq:g}. Otherwise for any
$g_0\in L^{\ii}(\om_0)$, $g_0\geq 1$ and $g_2\in L^{\ii}(\om_1)$, $g_2\equiv \hspace{-.35cm}\setminus\;0$ we have,
$$
\frac{\int\limits_{\om_1}\ov{G} g_2}{\int\limits_{\om_1} g_2}\neq \frac{\int\limits_{\om_0}\ov{G} g_0}{\int\limits_{\om_0} g_0}.
$$
In this case, for any such $g_2$, we let $g_3$ be any other non trivial function in $L^{\ii}(\om_1)$ linearly independent from $g_2$.
Then we set $g_1=-\frac{1}{x}g_3+\frac{y}{x}g_2$ and we see that  \eqref{eq:g} is equivalent to the system,

$$
\graf{\left(\int\limits_{\om_0}g_0\right)x+\left(\int\limits_{\om_1}g_2\right)y=\int\limits_{\om_1}g_3,\\
\left(\int\limits_{\om_0}\ov{G} g_0\right)x+\left(\int\limits_{\om_1}\ov{G} g_2\right)y=\int\limits_{\om_1}\ov{G} g_3.}
$$
Clearly, in this case, the determinant of the system is not zero, whence $(x,y)$ are uniquely determined. In particular $x\neq 0$ since otherwise $g_3$ would be proportional
to $g_2$.
\end{proof}

\bigskip

As mentioned above, in view of Proposition \ref{rho:0}, by the Lagrange multiplier rule in Banach spaces we find that $\rho=\rho^{(n)}\in \mathcal{P}^{(n)}_E(\om)$
satisfies
$$
-\log(\rho)-\lm G[\rho]+\sg\lm G_n -\tau=0,\mbox{ that is }\rho=e^{-\tau}e^{\lm G[\rho]-\sg\lm G_n},\mbox{ a.e. in }\om.
$$
We first use the Lagrange multiplier $\tau$ to satisfy the mass constraint,
$$
\ino e^{-\tau}e^{\lm G[\rho] -\sg\lm G_n}=1,\;\iff\; e^{-\tau}=\dfrac{1}{\ino e^{\lm G[\rho] -\sg\lm G_n}},
$$
whence
$$
\rho^{(n)}=\rho^{(n)}_{\lm}=\dfrac{H_{\lm,n} e^{\lm G[\rho^{(n)}_{\lm}]}}{\int\limits_{\om} H_{\lm,n} e^{\lm G[\rho^{(n)}_{\lm}]}}\quad \mbox{a.e. in }\om,
$$
where $H_{\lm,n}=e^{-\sg\lm G_n}$. Since $\rho^{(n)}\in \mathcal{P}_E(\om)$ we have that there exists $\lm=\lm_n(E)$ such that
the energy constraint is satisfied, which is just \eqref{mvp}, where $H_n$ takes the form \eqref{Hlmn.intro}. Putting $\psi_n=G[\rho^{(n)}]$ we come up with a solution of \eqref{mf:n.intro} for that
particular $\lm=\lm_n(E)$, as claimed.
\end{proof}

\bigskip

At this point we wish to pass to the limit as $n\to +\ii$ in \eqref{mvp} to come up with a solution of the (MVP) defining in particular a solution
$\psi$ of \eqref{mf:lm.intro} for some $\lm=\lm(E)$. The situation is rather delicate because we do not know much about $\lm_n=\lm_n(E)$ in \eqref{mvp}.
A uniform bound from above for $\lm_n$ via the Pohozaev identity
is not available for connected but not necessarily simply connected domains. Thus we need to adapt an argument in \cite{BCN24}, although in
a non trivial way, since we cannot take for granted that the weak limit (in the same sense of the proof of Theorem \ref{thm:mvpn}) of $\rho_n$ is
positive a.e. on $\om$. A careful refinement of the argument in \cite{BCN24} is needed which uses an argument similar to that adopted in
Proposition \ref{rho:0}.

\bigskip

{\it The Proof of Theorem \ref{thm:mvp}.}\\
Clearly $E_{0,n}(\om) \to E_{0}$ as $n\to +\ii$ and by the Jensen inequality as in the proof of Theorem \ref{thm:mvpn}, $\rho_0$ is the unique maximizer for $E=E_{0}$. In particular
$\psi_0=G[\rho_0]$ is the unique solution of \eqref{mf:lm.intro} for $\lm=0$.\\
Let $E>E_0$, then for $n$ large enough $E>E_{0,n}$ and by Theorem \ref{thm:mvpn} there exists at least one maximizer $\rho_n:=\rho_{\lm_n}$
of $\mathcal{S}$ satisfying \eqref{mvp} such that $\mathcal{E}(\rho_n)-\mathcal{E}_{\sg,n}(\rho_n)=E$, that is
\begin{align*}
\mathcal{S}(\rho_n)=S_n(E)=\underset{\rho\in\mathcal{P}^{(n)}_E(\O)}\sup \mathcal{\mathcal{S}(\rho)}.
\end{align*}

In particular $\psi_n=G[\rho_n]$ is a solution of \eqref{mf:lm.intro} for some $\lm_n=\lm_n(E)$. Since $\rho_n$ is a sequence of entropy maximizers then it is easy to see
that $\mathcal{S}(\rho_n)\geq -C$, and then, as in the proof of Theorem \ref{thm:mvpn}, we have that $\rho_n\to \rho_\infty\in L^{1}(\om)$ in the weak-$L^1(\om)$ topology and
\begin{equation}\label{enrg.conv:sgp}
\ino \rho_n\to \ino\rho_\infty=1,\quad \mathcal{E}(\rho_n)\to \mathcal{E}(\rho_\infty)\in (0,E].
\end{equation}
By the upper semicontinuity of the entropy we have $\mathcal{S}(\rho_\infty)\geq \limsup\limits_{n}\mathcal{S}(\rho_n)$.\\
At this point we split the discussion in two different cases which require similar but still different arguments.\\

CASE $\sg<0$.\\
Remark that if $\om$ is assumed to be simply connected and piecewise $C^2$, then it is well known that (see \cite{pom}) the Riemann map from $\om$
to the unit disk admits a univalent $C^1$ extension up to the boundary. Therefore, after a transplantation of \eqref{mf:lm.intro} to the unit disk, by a straightforward adaptation of a
well known argument based on the Pohozaev identity (see
for example $\S$ 7 in \cite{clmp1}) we have that $\lm_n\leq \overline{\lm}$, for some $\overline{\lm}$ depending by $\om$ and $\sg$ only, for any solution of
\eqref{mf:lm.intro}. It is worth to remark that this upper bound, which holds for any solution of \eqref{mf:lm.intro}, does not hold for general connected
domains, as well known examples show (\cite{dPKM}, \cite{KPV}, \cite{ns}). We work out a careful adaptation of an argument in \cite{BCN24}
to obtain a similar estimate for the $\lm_n$ corresponding to entropy maximizers obtained in Theorem \ref{thm:mvpn}.
\ble\label{lem:connect}  Assume that $\sg<0$ and $\om$ is connected but not simply connected. Then we have that $\lm_n\leq \overline{\lm}$, $\forall\,n$, for some $\overline{\lm}>0$.
\ele
\begin{proof}
Since $\sg<0$, then $\mathcal{E}(\rho_n)\leq E $ and consequently $\ino |\nabla \psi_n|^2\leq 2E,$ implying that along a subsequence,
for some $\psi\in H^1_0(\om)$, we have $\psi_n\to\psi$ a.e. in $\om$,
strongly in $L^p(\om)$ for any $1\leq p<+\ii$ and weakly in $H^1_0(\om)$.
Since $\psi_n$ is a sequence of solutions of \eqref{mf:n.intro} with $\lm=\lm_n$ we have,
$$
-\ino \psi_n \Delta \phi=\ino \rho_n \phi \to -\ino \psi \Delta \phi = \ino \rho_\infty \phi, \quad \forall \phi \in C^\ii_0{(\ov{\om})},
$$
where $C^{\ii}_0(\ov{\om})$ is the space of $C^{\ii}(\ov{\om})$ functions which vanish on $\pa\om$. By well known results
(see chapters 3, 4 in \cite{Ponce}), since
$\psi\in H^1_0(\om)$, this is equivalent to
\begin{equation}\label{psi:weak}
 -\ino \nabla\psi \cdot \nabla \phi = \ino \rho_\ii \phi, \quad \forall \phi \in C^\ii_c{({\om})}.
\end{equation}

Let $\phi$ be any $H^1_0(\om)$ function and $C^\ii_c{({\om})}\ni \phi_n\to \phi$ in $H^1_0(\om)$, then we obviously have
$$
 -\ino \nabla\psi \cdot \nabla \phi_n \to  -\ino \nabla\psi \cdot \nabla \phi
$$
and
$$
\left| \ino \rho_\ii (\phi_n-\phi)\right|\leq \int\limits_{\{\rho_\ii\leq m\}} \rho_\ii (\phi_n-\phi) +\int\limits_{\{\rho_\ii\geq m\}} \rho_\ii(\phi_n-\phi).
$$
The first term in the r.h.s. is $o(1)$ for each fixed $m\geq 1$ as $n\to +\ii$, while for the second we have, by the Young inequality \eqref{Young},
$$
\int\limits_{\{\rho_\ii\geq m\}} \rho_\ii(\phi_n-\phi)\leq \int\limits_{\{\rho_\ii\geq m\}} [\rho_\ii\log(\rho_\ii)-(\rho_\ii-1)]+\int\limits_{\{\rho_\ii\geq m\}} \left(e^{|\phi_n-\phi|}-1\right).
$$
Since $\ino \rho_\ii \log\rho_\ii <+\ii$, the first term in the r.h.s. is $o(1)$ as $m\to +\ii$ while by the Moser-Trudinger inequality we have
$$
\int\limits_{\{\rho_\ii\geq m\}} \left(e^{|\phi_n-\phi|}-1\right)\leq \int\limits_{\om} e^{|\phi_n-\phi|}|\phi_n-\phi|\leq
e^{\frac{\ino|\nabla (\phi_n-\phi)|^2}{8\pi}}\|\phi_n-\phi\|_{L^{2}(\om)}\leq C \|\phi_n-\phi\|_{L^{2}(\om)}.
$$
Therefore $\psi\in H^1_0(\om)$ satisfies \eqref{psi:weak} for any $\phi \in H^1_0(\om)$ and consequently is a weak solution of
\begin{align*}
\begin{cases}
 -\D \psi=\rho_\ii\,\,\,\,\,\,&\text{in}\,\,\O,\\
 \psi=0 &\text{in}\,\,\p\O.
\end{cases}
\end{align*}
Thus, by known results (\cite{Fer}), $\psi$ is continuous in $\ov{\om}$.

\begin{remark} Just remark that this is not enough to pass to the limit in the singular energy term, which satisfies, recalling \eqref{enrg.conv:sgp},
$$
0\leq -\mathcal{E}_{\sg,n}(\rho_n)=|\sg|\ino \rho_n G_n= E-\mathcal{E}(\rho_n)\to E-\mathcal{E}(\rho_\ii).
$$
\end{remark}

Since $\mathcal{E}(\rho_n)\to \mathcal{E}(\rho_\ii)>0$, we can assume w.l.o.g. that
$$
\mathcal{E}(\rho_n)\geq 2 \gamma E,
$$
for some $\gamma\in (0,\frac12)$ and for any $n$ large enough. Next, let us set,
$$
\om_{E}=\{x\in\om\,:\, \psi(x)-\sg G_{\om}(x,0)\leq \gamma {E}\}.
$$
Since $\psi(x)-\sg G_{\om}(x,0)=\psi(x)+|\sg| G_{\om}(x,0)$ is continuous far away from $x=0$, then $|\om_E|>0$.\\
Remark that, since $\sg<0$, then $-\sg G_{\om}(x,0)\to +\ii$ as $x\to 0^+$, whence there exists $r_0>0$ such that $B_{r_0}(0)\cap \om_{E}=\emptyset$.
Thus, $G_{n}(x)\to G_{\om}(x,0)$ uniformly in $\om_{E}$ and by the Egorov Theorem for any $\eps>0$ there exists
$\om_{\eps,E}\subset \om_{E}$
such that $\psi_n-\sg G_{n}$ converges uniformly in $\om_{\eps,E}$ and
$$
|\om_{E}\setminus \om_{\eps,E}|<\eps.
$$
We claim that
\begin{equation}\label{mE:1}
\int\limits_{\om_{E}}\rho_\ii=:m_{E}>0.
\end{equation}
We argue by contradiction and assume that $\int\limits_{\om_{E}}\rho_\ii=0$, thus for any $\dt>0$ we would have
$$
\int\limits_{\om_{E}}\rho_n=\ino \rho_n\chi(\om_E)<\dt,
$$
for any $n$ large enough. We adopt a refinement of the proof of Proposition \ref{rho:0}. For any large enough $n$ set $\om_1=\{\rho_n\leq m\}$ for some $m>0$ such that $|\{\rho_n>m\}|>0$ and observe that
$$
|\{\rho_n>m\}\cap \om_E|\leq \frac{\dt}{m}.
$$
Thus, for $\dt$ small enough, $|\{\rho_n\leq m\}\cap \om_E|>0$ and we fix any $n$ large enough corresponding to this choice of $\dt$, set
$\ov{\rho}=\rho_n$ and define $\om_0=\{\ov{\rho}\leq m\}\cap \om_E$, which therefore satisfies
$$
\int\limits_{\om_0}\ov{\rho}<\dt, \quad \om_0 \subseteq \om_1 \quad \mbox{and} \quad
0<|\om_0|<|\om|=1.
$$

At this point we let $f_\eps=f_{\eps,0}+f_{\eps,1}$ be defined as in the proof of Proposition \ref{rho:0} with the same notations and
observe once again that
$\ov{\rho}+f_\eps$ is an admissible variation of $\ov{\rho}$ so that we have
$
\mathcal{S}(\ov{\rho}+f_{\eps})\leq \mathcal{S}(\ov{\rho}),
$
which reads as follows,
\begin{align*}
&-\int\limits_{\om_0}(\ov{\rho}+f_{\eps,0})\log (\ov{\rho}+f_{\eps,0})-\int\limits_{\om_{1}\setminus \om_0}\ov{\rho}\log (\ov{\rho})-
\int\limits_{\om_{1}^c}(\ov{\rho}+f_{\eps,1})\log (\ov{\rho}+f_{\eps,1})\leq\\
&-\int\limits_{\om_0}\ov{\rho} \log \ov{\rho}
-\int\limits_{\om_{1}\setminus \om_0} \ov{\rho} \log \ov{\rho} -
\int\limits_{\om_{1}^c}\ov{\rho} \log \ov{\rho},\;\fo\,\eps\in [0,1),
\end{align*}
that is

\begin{align*}
-\int\limits_{\om_0}\left[
(\ov{\rho} + \eps (g_0+\mbox{\rm o}_x(1) ))\log (\ov{\rho}+\eps (g_0+\mbox{\rm o}_x(1) ) ) -\ov{\rho} \log \ov{\rho}\right]\\
-\int\limits_{\om_{1}^c}\left[ (\ov{\rho}+\eps (g_{1}+\mbox{\rm o}_x(1) ))\log
(\ov{\rho}+\eps (g_{1}+\mbox{\rm o}_x(1) ))- \ov{\rho}\log \ov{\rho}\right]\leq 0.
\end{align*}

Since $\rho\geq m $ in $\om_{1}^c$, then we have,
\begin{align*}
&-\int\limits_{\om_{1}^c}\left[ (\ov{\rho}+\eps (g_{1}+\mbox{\rm o}_x(1) ))\log
(\ov{\rho}+\eps (g_{1}+\mbox{\rm o}_x(1) ))- \ov{\rho}\log \ov{\rho}\right]=\\
&\eps\left(\int\limits_{\om_{1}^c}(-1-\log(\ov{\rho}))g_1\right) + \mbox{\rm o}(\eps).
\end{align*}

On the other side, since it will be clear by the proof that there is no loss of generality in assuming $\ov{\rho}>0$ in $\om_0$, for some
$\ov{\rho}(x)<\xi(x)<\ov{\rho}(x)+\eps (g_0(x)+\mbox{\rm o}_x(1))$, we have that,
\begin{align*}
&-\int\limits_{\om_0}\left[
(\ov{\rho} + \eps (g_0+\mbox{\rm o}_x(1) ))\log (\ov{\rho}+\eps (g_0+\mbox{\rm o}_x(1) ) ) -\ov{\rho} \log \ov{\rho}\right]=\\
& \eps\int\limits_{\om_0}(-1-\log(\xi))(g_0+\mbox{\rm o}_x(1) )
\end{align*}
and so we find that,
$$
\int\limits_{\om_0}(-1-\log(\xi))(g_0(x)+\mbox{\rm o}_x(1))\leq
\int\limits_{\om_{1}^c}(1+\log(\rho))g_1  + \mbox{\rm o}(1)\leq C.
$$
However we also have, by using the Jensen inequality, in the limit as $\eps \to 0^+$,
\begin{align*}
&\int\limits_{\om_0}(-1-\log(\xi))(g_0(x)+\mbox{\rm o}_x(1))\geq -C_1 -\int\limits_{\om_0}\log(\xi)(g_0(x)+\mbox{\rm o}_x(1))\geq \\
& -C_1 -2\|g_0\|_{\ii}\int\limits_{\om_0}\log(\xi)\geq -C_1 -2\|g_0\|_{\ii}|\om_0|\log\left(\,\int\limits_{\om_0}\frac{\xi}{|\om_0|}\right)\geq \\
&-C_2 -2\|g_0\|_{\ii}|\om_0|\log\left(\,\int\limits_{\om_0}{\xi}\right)\geq
-C_2 -2\|g_0\|_{\ii}|\om_0|\log\left(\,\int\limits_{\om_0}{\ov{\rho}(x)+\eps (g_0(x)+\mbox{\rm o}_x(1))}\right)\\
&\stackrel{\eps \to 0}{\rightarrow} -C_2 -2\|g_0\|_{\ii}|\om_0|\log\left(\,\int\limits_{\om_0}\ov{\rho}\right)\geq -C_2 -2\|g_0\|_{\ii}|\om_0|\log(\,\dt),
\end{align*}
which is the desired contradiction as $\dt\to 0 ^+$ which proves \eqref{mE:1}.\\

\medskip
In view of \eqref{mE:1} we can choose $\eps$ small enough to guarantee that,
$$
\int\limits_{\om_{\eps,E}}\rho_\ii\geq \frac12 \int\limits_{\om_{E}}\rho_\ii=\frac12 m_{E}>0.
$$
Since
$\psi_n-\sg G_n$ converges uniformly in $\om_{\eps,E}$, for any $n$ large enough we have that,
$$
\sup\limits_{\om_{\eps,E}} (\psi_n-\sg G_{n})\leq 2\gamma E \leq \mathcal{E}(\rho_n).
$$
Therefore, putting
$\mathcal{Z}_n=\ino H_{n}e^{\lm_n \psi_n}=\ino e^{\lm_n (\psi_n-\sg G_n)}$ and recalling that
\begin{equation}\label{entropy:sgm}
S_n(E)=\mathcal{S}(\rho_n)=-\ino \rho_n\log(\rho_n)=-\lm_n\ino \rho_n\left(\psi_n-\sg G_n\right)+\log(\mathcal{Z}_n)=
\end{equation}
$$
-2\lm_n \mathcal{E}(\rho_n)+\lm_n\mathcal{E}_{\sg,n}(\rho_n)+\log(\mathcal{Z}_n),
$$
we have that
$$
\frac12 m_{E}\leq \int\limits_{\om_{\eps,E}}\rho_\ii=
\liminf\limits_{n}\int\limits_{\om_{\eps,E}}\rho_n=
\liminf\limits_{n}\frac{1}{\mathcal{Z}_n}\int\limits_{\om_{\eps,E}}e^{\lm_n (\psi_n-\sg G_n)}\leq
$$
$$
\liminf\limits_{n}\frac{1}{\mathcal{Z}_n}\int\limits_{\om_{\eps,E}}e^{\lm_n \mathcal{E}(\rho_n)}=
\liminf\limits_{n}\frac{(\mathcal{Z}_n)^{\frac12}}{\mathcal{Z}_n}e^{-\frac12 S_n(E)}e^{\frac{\lm_n}{2} \mathcal{E}_{\sg,n}(\rho_n)}
\int\limits_{\om_{\eps,E}}dx\leq |\om_{E}|C_E\liminf\limits_{n}\frac{1}{(\mathcal{Z}_n)^{\frac12}},
$$

implying that $\mathcal{Z}_n\leq C_z$ for some positive constant $C_z$. As a consequence we have that
$$
2\lm_n(\mathcal{E}(\rho_\ii)+o(1))=2\lm_n \mathcal{E}(\rho_n)\leq 2\lm_n \mathcal{E}(\rho_n)-\lm_n\mathcal{E}_{\sg,n}(\rho_n)=-\mathcal{S}(\rho_n)+\log \mathcal{Z}_n\leq C.
$$

\end{proof}

\bigskip

In view of Lemma \ref{lem:connect}, possibly along a subsequence, we can assume that $\lm_n\to \lm_\ii\geq 0$.
First of all we have the following,
\ble Let $\rho_n$ and $\lm_n$ be defined above. There exists a subsequence such that
$$\lm_n\to \lm_\ii <\frac{4\pi}{|\sg|}.$$
\ele
\proof If not, possibly along a subsequence, we would have $\lm_n\to \lm_\ii\geq \frac{4\pi}{|\sg|}$ and consequently, recalling that $\psi_n\geq  0$ and
putting $\mathcal{Z}_n=\ino H_{n}e^{\lm_n \psi_n}=\ino e^{\lm_n (\psi_n-\sg G_n)}$, by the Fatou Lemma we would also have that,

$$
\liminf_{n\to +\ii} \mathcal{Z}_n\geq \liminf_{n\to +\ii}\ino e^{\lm_n |\sg| G_n}\geq \ino e^{\lm_\ii |\sg| G_{\om}(x,0)}=+\ii,
$$
where we just used that $\lm_\ii |\sg| G_{\om}(x,0)\geq C-2\log(|x|)$ in $\om$. Therefore,  recalling that $G_n\geq 0$, we would also have,
$$
C_E\geq S_n(E)=\mathcal{S}(\rho_n)=-\ino \rho_n\log(\rho_n)=-\lm_n\ino \rho_n\left(\psi_n-\sg G_n\right)+\log(\mathcal{Z}_n)=
$$
$$
-2\lm_n \mathcal{E}(\rho_n)+\lm_n\mathcal{E}_{\sg,n}(\rho_n)+\log(\mathcal{Z}_n)=-2\lm_n E-\lm_n\mathcal{E}_{\sg,n}(\rho_n)+\log(\mathcal{Z}_n)=
$$
$$
-2\lm_n E+\lm_n|\sg| \ino\rho_n G_n+\log(\mathcal{Z}_n)\geq -2(\lm_\ii+o(1)) E +\log(\mathcal{Z}_n)\to +\ii,
$$
which is a contradiction.
\finedim

\bigskip

Therefore, possibly along a subsequence, we can assume that $\lm_n\to \lm_\ii\in [0,\frac{4\pi}{|\sg|})$. Thus, if along a subsequence $\lm_n\psi_n$ were bounded, in view of the Lemma and by standard elliptic estimates we would easily deduce that,
possibly along a further subsequence,
$\psi_n\to \psi_\ii$ uniformly and then that $\rho_n\to \rho_\ii$ uniformly, so that we would have that,
\begin{equation}\label{enrg:mvp.1}
E=\mathcal{E}(\rho_n)-\mathcal{E}_{\sg,n}(\rho_n)\to \mathcal{E}(\rho_\ii)-\mathcal{E}_{\sg}(\rho_\ii) \quad \mbox{and}\quad 1=\ino \rho_n\to
\ino \rho_\ii.
\end{equation}
In particular this would imply that $\rho_\ii\in \mathcal{P}_E(\om)$ and we claim that this is enough to guarantee that $\rho_\ii$ is the needed entropy maximizer and that $\psi_\ii$ satisfies \eqref{mf:lm.intro} for
$\lm=\lm_\ii(E)$. Remark that we can exclude that $\lm_\ii(E)=0$ because otherwise we would necessarily have $\rho=\rho_0$ which would imply
$E=E_0$ against the assumption $E>E_0$.\\
\ble\label{lem:unif.conv} Assume that, possibly along a subsequence, as $\lm_n\to \lm_\ii\in (0,\frac{4\pi}{|\sg|})$ we have that
$\psi_n\to \psi_\ii$ uniformly in $\ov{\om}$. Then $\rho_n\to \rho_\ii$ uniformly in $\ov{\om}$ where
$\rho_\ii$ is a maximizer of the {\rm (MVP)
(i.e. \eqref{mvp:intro})}  and satisfies \eqref{mvpe}.
\ele
\begin{proof} Obviously, since $\rho_n$ satisfies \eqref{mvp}, by the assumptions we have that
$\rho_n\to \rho_\ii$ uniformly and consequently
$\rho_\ii$ solves \eqref{mvpe}.
Also, it is easy to
see that $S(E)>-\ii$. Therefore we are just left to prove that, possibly along a subsequence,
$$
\mathcal{S}(\rho_n)\to S(E).
$$
Clearly, in view of \eqref{enrg:mvp.1}, we have that $\rho_\ii\in \mathcal{P}_E(\om)$ implying by uniform convergence that,
$$
\mathcal{S}(\rho_n)\to \mathcal{S}(\rho_\ii)\leq S(E).
$$
We argue by contradiction and assume that,
$$
S(E)\geq 4\dt+\mathcal{S}(\rho_\ii) \quad \mbox{for some}\quad \dt>0.
$$
Let $\rho^{(k)}\in \mathcal{P}_E(\om)$ be any maximizing sequence such that
$$
S(E)\geq \mathcal{S}(\rho^{(k)})\geq 3\dt+\mathcal{S}(\rho_\ii),
$$
for any $k$ large enough, then for any $n$ large enough we also have
\begin{equation}\label{mvp:contr1}
\mathcal{S}(\rho^{(k)})\geq 2\dt+\mathcal{S}(\rho_n)=2\dt+\sup\{\mathcal{S}(\rho),\;\rho \in \mathcal{P}^{(n)}_E(\O)\},
\end{equation}
where we recall that
\begin{equation*}
\mathcal{P}^{(n)}_E(\O)=\left\{\rho\in \mathcal{P}(\O)|\, \mathcal{E}(\rho)-\mathcal{E}_{\sg,n}(\rho)=E\right\}.
\end{equation*}

We will obtain the desired contradiction showing that for any such fixed $k$ and any $n$ large enough there exists $\rho_{k,n}\in \mathcal{P}^{(n)}_E(\O)$ such that
\begin{equation}\label{mvp:contr3}
\mathcal{S}(\rho_{k,n})\geq \mathcal{S}(\rho^{(k)})-\dt,
\end{equation}
which clearly contradicts \eqref{mvp:contr1}. We fix for the rest of this proof any $k$ large enough as above. Put $G(x)=G_{\om}(x,0)$ and define
\begin{align*}
    \epsilon_{n,k}& :=\mathcal{E}_{\sg}(\rho^{(k)})-\mathcal{E}_{\sg,n}(\rho^{(k)})=(\mathcal{E}(\rho^{(k)})-\mathcal{E}_{\sg,n}(\rho^{(k)}))-(\mathcal{E}(\rho^{(k)})-\mathcal{E}_{\sg}(\rho^{(k)})).
\end{align*}
Then,
\begin{align*}
|\epsilon_{n,k}|& :=\left|(\mathcal{E}(\rho^{(k)})-\mathcal{E}_{\sg,n}(\rho^{(k)}))-(\mathcal{E}(\rho^{(k)})-\mathcal{E}_{\sg}(\rho^{(k)}))\right|\\
&=|\sg| \ino\rho^{(k)}\left| G-G_n\right|\leq |\sg|\int\limits_{\om \cap B_\eps}\rho^{(k)}\left| G-G_n\right|+
|\sg|\int\limits_{B_\eps}\rho^{(k)}\left| G-G_n\right|\\
&= o(1)+|\sg|\int\limits_{B_\eps}\rho^{(k)}G,
\end{align*}
where $o(1)\to 0$ as $n\to +\ii$ and we used $G_n\leq G$. However, by the Young inequality \eqref{Young}, we have that,
$$
\int\limits_{B_\eps}\rho^{(k)}G\leq \int\limits_{B_\eps}\left[\rho^{(k)}\log(\rho^{(k)})-(\rho^{(k)}-1)\right]\chi({\rho^{(k)}\geq 1})+
\int\limits_{B_\eps} (e^{G}-1)\to 0,\;\mbox{as}\;\eps\to 0,
$$
where we used $\int\limits_{\om}\rho^{(k)}\log(\rho^{(k)})<+\ii$ and the exponential integrability of $G$. Therefore we have $\eps_{n,k}\to 0$ as $n\to +\ii$
and we define,
$$
\rho^{(k)}_t=\rho^{(k)}+t g_n,
$$
where $g_n$ is any function satisfying,
$$
g_n\in L^{\ii}(\om), \quad  \ino g_n=0,
$$
so that $\ino \rho^{(k)}_t=1$ for any $t$. Thus we have,
$$
|\mathcal{E}(\rho^{(k)}_t)-\mathcal{E}_{\sg,n}(\rho^{(k)}_t)-E|=
|\mathcal{E}(\rho^{(k)}_t)-\mathcal{E}_{\sg,n}(\rho^{(k)}_t)-(\mathcal{E}(\rho^{(k)})-\mathcal{E}_{\sg}(\rho^{(k)}))|=
$$
\begin{equation}\label{mvp:contr2}
|\eps_{n,k}+ta_{n,k}+b_{n}t^2|,
\end{equation}
where,
$$
a_{n,k}:=\ino (G[\rho^{(k)}]+|\sg| G_n)g_n,\qquad b_n=\tfrac12\ino g_n G[g_n].
$$
At this point we choose $g_n=g_{+,n}-g_{-,n}$,
$$
g_{+,n}=\frac{\chi(B_{r_n})}{|B_{r_n}|},\qquad g_{-,n}=\frac{\chi(\om_{-,n})}{|\om_{-,n}|},
$$
where $\om_{-,n}\subset \om$ is any nonempty set such that $\sup\limits_{x\in \om_{-,n}}\mbox{dist}(x,\pa \om)\leq d_n$, for a suitable sequence $d_n=o_n(1)$ to be fixed later,
and $r_n\to 0$ is any sequence such that,
$$
\frac{\eps_n^2}{r_n^2}\log(\eps_n)\to 0.
$$
Clearly $\ino g_n=0$ and standard evaluations show that,
$$
a_{n,k}=-\frac{|\sg|}{2\pi r_n^2}\left[(\eps_n^2+r_n^2)\log(\eps_n^2+r_n^2)-\eps_n^2\log(\eps_n^2)\right]+O_k(1)=
-\frac{|\sg|+o_k(1)}{2\pi}\log(r_n^2),
$$
$$
b_n=-\frac{1}{8\pi}\log(r_n^2)+O_k(1)=-\frac{1+o_k(1)}{8\pi}\log(r_n^2),
$$
where $o_k(1)\to 0$ for $k$ fixed and large as above and likewise $|O_k(1)|\leq C_k$. Therefore we deduce that, putting $t=\frac{s}{\log(r_n^2)}$,
$$
\eps_{n,k}+ta_{n,k}+b_{n}t^2=\eps_{n,k}+t(\log(r_n^2))\left(-\frac{|\sg|}{2\pi}(1+o_k(1))+\frac{t}{8\pi}(1+o_k(1))\right)=
$$
$$
\eps_{n,k}-s\frac{|\sg|}{2\pi}(1+o_k(1)).
$$
At this point we see that there exists some uniformly bounded sequence $s_{n,k}>0$ such that
$\eps_{n,k}-s_{n,k}\frac{|\sg|}{2\pi}(1+o_k(1))=0$, implying
from \eqref{mvp:contr2} that,
$$
\rho_{k,n}=\left.\rho^{(k)}_{t}\right|_{t=\frac{s_{n,k}}{\log(r_n^2)}},
$$
satisfies $\mathcal{E}(\rho_{k,n})-\mathcal{E}_{\sg,n}(\rho_{k,n})=E$. Moreover, by our choice of $s_{n,k}$ and $g_n$, 
$$\int_\O\rho_{k,n}=1\qquad\int_\O\rho_{k,n}\log\rho_{k,n}<+\infty$$ and, by choosing $d_n$ such that $d_n^{-1}=o(\log(r_n^2))$, it is easy to verify that $\rho_{k,n}\geq0$ a.e. Hence, $\rho_{k,n}\in \mathcal{P}^{(n)}_E(\O)$. Since $s_{n,k}$ is uniformly bounded, then
$\frac{s_{n,k}}{\log(r_n^2)}\to 0$, as $n\to +\ii$ and consequently $\rho_{k,n}\to \rho^{(k)}$ uniformly implying that $\rho_{k,n}$ satisfies
\eqref{mvp:contr3}, as claimed.
\end{proof}

\medskip

Therefore, we are left with showing that $\lm_n\psi_n$ admits a bounded subsequence. We argue by contradiction and assume that,
\begin{equation}\label{blowup:psi}
\lm_n\psi_n(x_n)=\sup\limits_{\om}\lm_n\psi_n\to +\ii,
\end{equation}
as $n\to +\ii$, for some sequence $\{x_n\}\subset \om$. By using that the domain is piecewise $C^2$,
it has been shown in \cite{CCL}
that $x_n$ stays uniformly bounded far away from the boundary, whence we can assume that,
\begin{equation}\label{unif:bdy}
\sup\limits_{\om_0}\lm_n\psi_n\leq C_0,
\end{equation}
for some small open neighborhood $\om_0$ of $\pa \om$. As a consequence in particular we have $x_n\to x_\ii\in \om$. Denoting $H_n$ as in \eqref{Hlmn.intro}, let us set,
\beq\label{vn:def}
v_{n}=\lm_n \psi_n-\log\left(\ino H_{n} e^{\lm_n \psi_n}\right)+\log\lm_n,
\eeq
which satisfies

\begin{align}\label{mf:cvp-mvpn}
\begin{cases}
 -\D v_{n}=H_{n}e^{v_n}\,\,\,\,\,\,&\text{in}\,\,\O,\\
 \lm_n= \int\limits_{\O}H_{n}e^{v_n},\\
 v_{n}=-c_n &\text{in}\,\,\p\O,
\end{cases}
\end{align}
where,
\beq\label{cn:def}
c_n=-\log(\lm_n)+\log\left(\ino H_{n} e^{\lm_n \psi_n}\right).
\eeq
There is no loss of generality in assuming that $v_n(x_n)\to +\ii$, since otherwise $v_n$ would be uniformly bounded from above
and we would immediately deduce that $\rho_n\leq C H_n$, implying by standard elliptic estimates that $\psi_n$  would be uniformly bounded as well (recall that $\lm_n\to\lm_\ii$).\\

At this point we can apply Theorem \ref{Concentration-Compactness alternative}, implying that necessarily {\rm (iii)} holds true for some $m\geq 1$ and in particular
$$
\lm_n\rho_n=H_{n}e^{\dis v_{n}}\rightharpoonup \sum_{i=1}^m\beta_i\delta_{q_i},
$$
weakly in the sense of measures in $B_r$, for any $r\in(0,1)$, with $\beta_i\in 8\pi\N$ if $q_i\neq0$, while if $q_i=0$ for some $i\in\{1,\ldots,m\}$ then
$\beta_i$ satisfies \eqref{mmbeta}. In any case, in particular $\lm_n\rho_n$ converges in the sense of distributions in $B_r$ to $\sum_{i=1}^m\beta_i\delta_{q_i}$, which is a contradiction because in the same time $\lm_n\rho_n$ converges in the weak-$L^1(\om)$ topology to $L^1(\om)$ density, say $\rho$. Thus,
in particular $\lm_n\rho_n$ converges to $\rho$ in the sense of distributions in $B_r$ and the uniqueness of the limit would imply that $\rho\chi(B_r)$ should coincide
with $\sum_{i=1}^m\beta_i\delta_{q_i}$, which is the desired contradiction. This fact concludes the proof of the case $\sg<0$.

\bigskip

CASE $\sg>0$. If $\om$ is simply connected, as discussed above for the case $\sg<0$, we have $\lm_n\leq \ov{\lm}$ by the Pohozaev identity. Suppose we were able to prove that $\lm_n\leq \ov{\lm}$ also for connected but possibly not simply connected domains.
Then, by a straightforward adaptation of Lemma \ref{lem:unif.conv} to the case $\sg>0$, we see again that it is enough to prove that $\psi_n\to \psi$
uniformly. At this point we can apply Theorem \ref{Concentration} and the contradiction follows as in the case $\sg<0$.
Therefore we are left with the following analogue of Lemma \ref{lem:connect} whose proof requires a suitably adapted argument.

\ble\label{lem:connect2}  Assume that $\sg>0$ and $\om$ is connected but not simply connected. Then we have that $\lm_n\leq \overline{\lm}$, $\forall n$, for some $\overline{\lm}>0$.
\ele
\begin{proof}
Since $\psi_n$ is a sequence of (continuous) solutions of \eqref{mf:n.intro} with $\rho_n$ satisfying $\ino \rho_n\log\rho_n\leq C$, by the Green representation formula and the Young inequality \eqref{Young} we have,
$$
\|\psi_n\|_\ii=\psi_n(x_n)\leq \int\limits_{\om} [\rho_n\log(\rho_n)-(\rho_n-1)]\,dy+\int\limits_{\om} \left(e^{G_\om(x_n,y)}-1\right)\,dy\leq C_0,
$$
where the last integral is estimated uniformly in a standard way, see for example \cite{bm}. Therefore we have that, for any $r$ small enough,
\beq\label{unif:sgp}
\psi_n(x)-\sg G_n(x) \leq C_1+\frac{\sg}{4\pi}\log((\eps_n^2+|x|^2)),\quad \forall\, |x|\leq r.
\eeq
On the other side we have, recalling \eqref{enrg.conv:sgp},
$$
0\geq -\mathcal{E}_{\sg,n}(\rho_n)=-\sg\ino \rho_n G_n= E-\mathcal{E}(\rho_n)\to E-\mathcal{E}(\rho_\ii)\geq -C.
$$
Therefore, in view of \eqref{unif:sgp}, there exists some $r_0<1$ small enough such that, for any $n$ large enough,
$$
\sup\limits_{B_{r_0}}(\psi_n(x)-\sg G_n(x)) \leq \mathcal{E}(\rho_n)-\frac12\mathcal{E}_{\sg,n}(\rho_n).
$$

The same argument used to prove \eqref{mE:1} in case $\sg<0$ works fine with minor changes showing that
\begin{equation}\label{mE:2}
\int\limits_{B_{r_0}}\rho=:m_{0}>0.
\end{equation}
As a consequence we deduce that, recalling \eqref{entropy:sgm},
$$
m_{0}= \int\limits_{B_{r_0}}\rho=
\liminf\limits_{n}\int\limits_{B_{r_0}}\rho_n=
\liminf\limits_{n}\frac{1}{\mathcal{Z}_n}\int\limits_{B_{r_0}}e^{\lm_n (\psi_n-\sg G_n)}\leq
$$
$$
\liminf\limits_{n}\frac{1}{\mathcal{Z}_n}\int\limits_{B_{r_0}}e^{\lm_n (\mathcal{E}(\rho_n)-\frac12\mathcal{E}_{\sg,n}(\rho_n))}=
\liminf\limits_{n}\frac{(\mathcal{Z}_n)^{\frac12}}{\mathcal{Z}_n}e^{-\frac12 S_n(E)}
\int\limits_{B_{r_0}}dx\leq |B_{r_0}|C_E\liminf\limits_{n}\frac{1}{(\mathcal{Z}_n)^{\frac12}},
$$

implying that $\mathcal{Z}_n\leq C_z$ for some positive constant $C_z$, in this case as well. Therefore we have that
$$
\lm_n E=\lm_n(\mathcal{E}(\rho_n)-\mathcal{E}_{\sg,n}(\rho_n))=-\lm_n \mathcal{E}(\rho_n)+\lm_n(2\mathcal{E}(\rho_n)-\mathcal{E}_{\sg,n}(\rho_n))=
$$
$$
-\lm_n \mathcal{E}(\rho_n)-\mathcal{S}(\rho_n)+\log \mathcal{Z}_n\leq -\mathcal{S}(\rho_n)+\log \mathcal{Z}_n\leq C.
$$
This fact concludes the proof of the case $\sg>0$.
\end{proof}

\bigskip
\bigskip

At this point, by using Theorems \ref{thm:cvpintro} and \ref{thm:mvp} and the uniqueness results in \cite{bl1} for $\sg>0$ and \cite{BJL}, \cite{BCJL} for $\sg<0$,
as in \cite{clmp2} we can prove the equivalence of statistical ensembles for domains of type I. More exactly this is the generalization of
Proposition 3.3 in \cite{clmp2} to
the case where $\om$ is an open, bounded, piecewise $C^2$ and simply connected domain of Type I.

\begin{theorem}\label{thm:Equiv}  Let $\om$ be an open, bounded, piecewise $C^2$ and simply connected domain of Type I and
$\beta=-\lambda$.
Then we have:\\
(i) $F(\beta)=-f(-\beta)$ is defined and strictly convex for $\beta> -\lm_\sg$;\\
(ii) $F(\beta)$ is differentiable for $\beta>-8\pi$ and
$$
E(\beta)=-F^{'}(\beta)=\frac12 \ino \rho_{\lm}G[\rho_\lm]-\sg\ino \rho G_{\om}(x,0),
$$
where $\rho_{\lm}$ solves \eqref{mf:lm.intro} for $\lm=-\beta$. In particular $E(\beta)$ is a continuous and strictly monotone decreasing bijection;\\
(iii) $S(E)=\inf\limits_{\beta}\{F(\beta)+\beta E(\beta)\}$ and hence is a smooth and concave function of $E$;\\
(iv) If $\rho^{(E)}$ is a maximizer for \eqref{mvp} then there exists a unique $\beta>-8\pi$ such that
$\rho_{_{-\beta}}=\rho^{(E(\beta))}$ which solves \eqref{mf:lm.intro} for $\lm=-\beta$ (equivalence of statistical ensembles) and the solution is unique.
\end{theorem}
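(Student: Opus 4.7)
The plan is to adapt Proposition 3.3 of \cite{clmp2} to the singular setting, exploiting the existence/uniqueness results of Theorems \ref{thm:cvpintro} and \ref{thm:mvp} together with the uniqueness results from \cite{bl1} for $\sg>0$ and \cite{BJL,BCJL} for $\sg<0$ for the Singular Mean Field Equation \eqref{mf:lm.intro}.

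\textbf{Part (i).} By Theorem \ref{thm:cvpintro}, for $\lambda\in[0,\lm_\sg)$ the (CVP) has a unique minimizer $\rho_\lambda$, so $f$ is well-defined and finite there. For $\lambda<0$, in view of the representation in Lemma \ref{lemmafunctionalequality} the functional $\mathcal{F}_\lambda$ becomes strictly concave on $\mathcal{P}(\om)$, and the direct method yields a unique maximizer, hence $f(\lambda)$ is finite on $(-\infty,0]$ as well. Thus $F(\beta)=-f(-\beta)$ is defined on $(-\lm_\sg,+\infty)$. For strict convexity I would show the map $\lambda\mapsto\rho_\lambda$ is strictly monotone: if $\rho_{\lambda_1}=\rho_{\lambda_2}$ with $\lambda_1\ne\lambda_2$, then \eqref{mf:lm.intro} implies $(\lambda_1-\lambda_2)\psi_{\lambda_1}$ is constant, which combined with the Dirichlet condition forces $\psi_{\lambda_1}\equiv 0$ and $\lambda_1=\lambda_2=0$, a contradiction. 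Together with the envelope formula in the next step this yields strict monotonicity of $F'$, i.e.\ strict convexity of $F$.

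\textbf{Part (ii).} The envelope theorem applied to the variational characterization of $f$ gives $f'(\lambda)=-(\mathcal{E}(\rho_\lambda)-\mathcal{E}_\sg(\rho_\lambda))$, so $F'(\beta)=-E(\beta)$ with the explicit formula claimed. Continuity of $E(\beta)$ follows from uniqueness of $\rho_\lambda$ plus standard stability for \eqref{mf:lm.intro} (using the a priori bounds of Section \ref{sec2}). Strict monotonicity of $E$ is strict convexity of $F$. For the bijection onto $[E_0,+\infty)$ I would note that at $\beta=0$ one has $\rho_0\equiv 1$ and $E(0)=E_0$, while as $\beta\to -\lm_\sg^+$, equivalently $\lambda\to\lm_\sg^-$, the Type I hypothesis combined with the blow-up analysis of Section \ref{sec2} (cf.\ the sharpness discussion in Remark \ref{rem:conicsing}) yields $E(\beta)\to +\infty$. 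The intermediate value theorem concludes surjectivity.

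\textbf{Parts (iii) and (iv).} Given $F$ strictly convex and $C^1$ on $(-\lm_\sg,+\infty)$ with $-F'$ a homeomorphism onto $[E_0,+\infty)$, standard convex duality yields that $E\mapsto \inf_\beta\{F(\beta)+\beta E\}$ is smooth and strictly concave. The identity $S(E)=\inf_\beta\{F(\beta)+\beta E\}$ follows in the classical way: the inequality $\leq$ is the direct rewriting of $\mathcal{S}(\rho)\leq -\mathcal{F}_{-\beta}(\rho)+\beta(\mathcal{E}(\rho)-\mathcal{E}_\sg(\rho))$ for any admissible $\rho$, while the reverse inequality is obtained by picking, for each $E$, the unique $\beta(E)$ with $E(\beta(E))=E$ and evaluating at $\rho=\rho_{-\beta(E)}\in\mathcal{P}_E(\om)$. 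For (iv), any maximizer $\rho^{(E)}$ of the (MVP) given by Theorem \ref{thm:mvp} solves \eqref{mvpe} for some $\lambda(E)$; the Type I assumption forces $\lambda(E)<\lm_\sg$, so the uniqueness results of \cite{bl1,BJL,BCJL} apply and $\rho^{(E)}=\rho_{\lambda(E)}=\rho_{-\beta(E)}$ is the unique CVP solution at the conjugate inverse temperature.

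\textbf{Main obstacle.} The delicate step is surjectivity of $E:(-\lm_\sg,+\infty)\to[E_0,+\infty)$ in (ii), specifically showing $E(\beta)\to+\infty$ as $\beta\to -\lm_\sg^+$. This is precisely where the Type I condition (Definition \ref{typeI}) enters in an essential way: Type II domains carry a genuine energy gap on which no CVP solution exists, and the equivalence breaks down. A secondary, purely technical point is that the uniqueness results \cite{bl1,BJL,BCJL} were stated for weights of the form $|x|^{2\alpha}$ with $\alpha$ fixed, while here the relevant weight $H_\lm$ from \eqref{Hlm:intro} depends on $\lambda$ through its exponent; however this adjustment, already handled in the proof of Theorem \ref{thm:cvpintro}, carries over verbatim because in the Type I range one has either $\lm<8\pi$ ($\sg>0$) or $\lm\int\rho_\lm\leq 8\pi(1+\tfrac{\lm}{4\pi}\sg)$ ($\sg<0$), which are exactly the assumptions of the quoted uniqueness statements.
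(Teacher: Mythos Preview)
Your proposal is correct and follows essentially the same approach as the paper: the paper's own proof simply says that, relying on Theorems \ref{thm:cvpintro} and \ref{thm:mvp} and the uniqueness results of \cite{bl1,BJL,BCJL}, the argument of Proposition 3.3 in \cite{clmp2} goes through verbatim, and it makes exactly the remark you make about the $\lambda$-dependent exponent in $H_\lm$ being absorbed by a rescaling of the uniqueness threshold. One small point worth tightening: for surjectivity in (ii) you appeal to blow-up analysis to force $E(\beta)\to+\infty$ as $\beta\to-\lm_\sg^+$, but for a general Type~I domain the cleaner route is the converse implication---Definition \ref{typeI} directly says every $E>E_0$ is the energy of some solution of \eqref{mvpe} with $\lm(E)<\lm_\sg$, and uniqueness then identifies this solution with the canonical $\rho_{\lm(E)}$, giving surjectivity of $E(\cdot)$ immediately; the divergence of $E$ at the threshold is then a consequence rather than an input.
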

\proof Relying on Theorems \ref{thm:cvpintro} and \ref{thm:mvp}, the proof adopted in \cite{clmp2} works exactly as it stands,
where one uses the uniqueness results in \cite{bl1} for $\sg>0$ and those in \cite{BJL}, \cite{BCJL} for $\sg<0$. Remark that those uniqueness results are
obtained for \eqref{mf:lm.intro} where the order of the exponent $\alpha$ of the weight function $|x|^{2\al}$ is kept fixed, while
in this context we have $\al=\frac{\lm}{4\pi}\sg$. However is not difficult to see that for $\sg>0$ the uniqueness threshold is still $8\pi$,
while for $\sg<0$ the relevant information is just the inequality
$\lm=\lm\int\limits_{\om}\rho_{\lm}\leq 8\pi(1+\al)$, which is granted in this case by the fact that $\lm<\lm_\sg=\frac{8\pi}{1-2\sg}$.
In other words, as far as uniqueness is concerned, the overall effect of the replacement $\al\mapsto \frac{\lm}{4\pi}\sg$ is just that
of a suitable scaling of the threshold.
\finedim

\bigskip
\bigskip

\section{An example of a domain of Type I with $\sg<0$.}\label{sec5}

In this section we discuss the simplest case of a domain of Type I with $\sg<0$, which is $\om=B_1=B_1(0)$ with the singularity always assumed to be
the origin,
so that \eqref{mf:lm.intro} takes the form,
\begin{align}
\begin{cases}\label{mf:lm.disk}
 -\D \psi_{\lambda,\sg}=\rho_{\lambda,\sg}=\dfrac{{ H_{\lm,\sg}} e^{\lambda \psi_{\lambda,\sg}}}
 { \int\limits_{B_1}  { H_{\lm,\sg}} e^{\lambda \psi_{\lambda,\sg}}}\,\,\,\,\,\,&\text{in}\,\,B_1,\\
 \psi_{\lambda,\sg}=0 &\text{on}\,\,\p B_1,
\end{cases}
\end{align}
where
$$
H_{\lm,\sg}(x)=|x|^{\frac{\sigma\lambda}{2\pi}}\equiv |x|^{\al_{\lm,\sg}}, \quad \al_{\lm,\sg}:=\frac{\sg \lambda}{2\pi}.
$$
Recall that $\sg<0$ here, whence we have that,
$$
\lm_\sg=\frac{8\pi}{1+2|\sg|}.
$$
In view of Remark \ref{rem:conicsing}, we may rely on the uniqueness of solutions of \eqref{mf:lm.disk} for $\lm<\lm_\sg$ (\cite{BJL}), implying that  the unique solution $\psi_{\lm,\sg}$ of
\eqref{mf:lm.disk} reads,
$$
\lm \psi_{\lm,\sg}(x)=2\log\left(\frac{1+\gamma_{\lm,\sg}^2}{1+\gamma_{\lm,\sg}^2|x|^{2(1+\al_{\lm,\sg})}}\right),\quad |x|<1,\qquad
\gamma_{\lm,\sg}^2=\frac{1}{1+2|\sg|}\frac{\lm}{\lm_\sg-\lm},
$$
for any $\lm<\lm_\sg$, so that in particular,
$$
\int\limits_{B_1}  H_{\lm,\sg}e^{\lambda \psi_{\lambda}}=\pi \gamma_{\lm,\sg}^2.
$$
It is useful to observe that,
$$
\gamma_{\lm,\sg}^2=\frac{1}{1+2|\sg|}\frac{\lm}{\lm_\sg-\lm}\equiv \frac{\lm}{8\pi-(1+2|\sg|)\lm}\equiv \frac{\lm}{8\pi(1+\al_{\lm,\sg})-\lm}.
$$
Consequently the vorticity density takes the form,
$$
\rho_{\lambda,\sg}(x)=\tfrac{\left(1+\gamma_{\lm,\sg}^2\right)^2}{\pi \gamma_{\lm,\sg}^2}\frac{|x|^{\al_{\lm,\sg}}}
{\left(1+\gamma_{\lm,\sg}^2|x|^{2(1+\al_{\lm,\sg})}\right)^2},
$$
which satisfies,
$$
\rho_{\lambda,\sg} \rightharpoonup \delta_{p=0},\quad \lm\to (\lm_\sg)^{-},
$$
weakly in the sense of measures in $B_1$. Also, we have that,
$$
\al_{\lm,\sg}\to -\frac{2|\sg|}{1+2|\sg|},\quad \lm\to (\lm_\sg)^{-},
$$
which is bounded far away from $-1$. Remark that for $\sg=0$ we just recover the well known
solutions of the regular problem in the unit disk (\cite{clmp1}).\\
For the time being, just to simplify notations, let us set $\al=\al_{\lm,\sg}$. Then by a straightforward evaluation we have,
\beq\label{enrg:sec5}
E_{\lm,\sg}:=\mathcal{E}(\rho_{\lambda,\sg})=
-\frac{1}{8\pi(1+\al)}\frac{8\pi(1+\al) }{\lm}\left(1+\frac{8\pi(1+\al)}{\lm}\log(1-\frac{\lm}{8\pi(1+\al)})\right),
\eeq
and
\beq\label{entr:sec5}
\mathcal{S}(\rho_{\lambda,\sg})=2+\log(\frac{\pi}{1+\al})+\left(\frac{16\pi(1+\al)}{\lm}-\frac{\al}{1+\al}\frac{8\pi(1+\al)}{\lm}-1\right)
\log(1-\frac{\lm}{8\pi(1+\al)}).
\eeq
First of all, by using the uniqueness results in \cite{BJL},
since from \eqref{enrg:sec5} we have $E_{\lm,\sg}\to +\ii$ as $\lm\to (\lm_\sg)^{-}$ and since $E_{0,\sg}=E_0(B_1)$, then we see that
$B_1$ is of Type I according to Definition \ref{typeI}. Also, by using \eqref{enrg:sec5}, \eqref{entr:sec5} and
again by the uniqueness results in \cite{BJL}, to be used this time together with
the equivalence of statistical ensembles (Theorem \ref{thm:Equiv}), a lengthy but straightforward evaluation shows that
the entropy $S(E)$ satisfies,
$$
S(E)=-8\pi E+2+\frac{1}{1+\al}+\log(\frac{\pi}{1+\al}) +\frac{e}{1+\al}e^{-8\pi(1+\al)E}+{\mbox \rm O}\left(E^2e^{-16\pi(1+\al)E}\right), \quad E\to +\ii.
$$
For $\sg=0$ we recover the regular case, the leading term in the right hand side being of course in agreement with
a well known general result (see Proposition 6.1 in \cite{clmp2}) claiming that
we have $-8\pi E +C_1 \leq S(E) \leq -8\pi E +C_2$ for any domain. However, the linear divergence rate is $-8\pi$ even if $|\sg|\neq 0$.
This is not good news as far as one is interested in quantitative estimates to distinguish solutions
which are entropy maximizers from those like those in section \ref{sec6} below which should be not.
Indeed, it seems to suggest that the leading order in the entropy as $E\to +\ii$ is not affected by the singularity,
the difference being due to lower order, uniformly bounded terms. It seems that neither the refined estimates of Theorem \ref{profile-new} are enough to catch these difference, which is why we will discuss this point elsewhere (\cite{BCYZZ2}).

\bigskip
\bigskip

\section{High energy limit of solutions $\rho_n$ of \eqref{mf:n.intro}.}\label{sec6}
In the regular case ($\sg=0$) it has been shown in \cite{clmp2} that for \underline{any} domain (i.e. both Type I and Type II), in the limit
$E\to +\ii$ the entropy maximizers concentrate to a Dirac delta whose singular point is a maximum point of $\gamma_\om$. This is one of the major
achievements of the Onsager theory: as $E\to +\ii$ vortices of the same sign attract each other and possibly
coalesce to form a unique macroscopic vortex, which necessarily happens in the negative temperature regime $\lm(E)>0$. Concerning the case $\sg>0$
(that is, a negative counter-rotating vortex fixed at
the origin with strength $-\sg$) it seems therefore obvious that the states were the (positive) vorticity $\rho$ concentrates
on the fixed counter-rotating vortex location (indicating a collision of vortices of opposite sign), would not be entropy maximizers. Indeed,
as mentioned in the introduction, in the model corresponding to \eqref{mf:lm.intro} and for domains of Type I,
whenever this sort of collision occurs, then $\lm\to \frac{8\pi}{1-2\sg}$ for some $\sg<\frac12$ (see Theorem \ref{massboundarycontrol0}).
Thus, states of this sort are allowed only in a
temperature region far away (since $\frac{8\pi}{1-2\sg}>8\pi$) from the equivalence region $\lm(E)<8\pi$. An intriguing natural question arise at this stage:
is this true also for the entropy maximizers $\rho_n$ of the regularized problem as derived in Theorem \ref{thm:mvpn}? In other words, suppose that for $\sg>0$
some density $\rho_n$ with energy $E_n\to +\ii$ develops a singularity at the origin: is it true that $\lm_n$ is bounded far away from $8\pi$?
Remark that afterall
the total stream functions $\psi_{\sg,n}=\psi_n-\sg G_n(x)=G[\rho_n]-\sg G_n(x)$ are just solutions of the Euler equation in vorticity form,
$$
-\Delta \psi_{\sg,n}=\dfrac{e^{  \lm_n\psi_{\sg,n}}}{\int_\O e^{  \lm_n\psi_{\sg,n}}}-\sg g_n,
$$
where the last term $\sg g_n=\frac{\sg}{\pi}\frac{\eps_n^2}{\eps_n^2+r^2}$ is converging in the sense of measures to $\sg \dt_{p=0}$.
From the point of view of the description of a vorticity field interacting with a
fixed singular source, we would expect essentially the same behavior if the Dirac data describing the singular vortex
were replaced by a highly peaked smooth density.
It seems however that to provide a rigorous answer is not trivial and, as far as we could check,
the model allows the collapse of vortices on the fixed counter-rotating vortex as $\lm\to 8\pi$, see Theorem \ref{profile-new}.
Thus it would be interesting to have a quantitative entropy estimates for these solutions,
to be compared with entropy maximizers concentrating as $\lm\to 8\pi$ at a maximum point of
$\gamma_\om(x)-\sg G_\om(x,0)$. Estimates of these sort seems to be rather delicate (see section \ref{sec5}), which is why
we will discuss this point elsewhere (\cite{BCYZZ2}). Actually, it would be interesting to use
more sophisticated estimates in the same spirit of \cite{dwz,wz1,wz2} to check whether or not and
possibly under which conditions asymptotically non radial (around the singularity) blow up solutions of the same sort described in Theorem 
\ref{profile-new} really exist as $\lm\to 8\pi$.\\

We will show that, for solutions $\rho_n$ of \eqref{mf:n.intro},  
whenever a singularity arise according to Theorem \ref{profile-new} at the vortex
point with $\sg>0$, then necessarily we have, 
\begin{equation}\label{enrg:n}
E_n=\mathcal{E}(\rho_{n})-\mathcal{E}_{\sg,n}(\rho_{n})=\frac12 \ino \rho_nG[\rho_n]-\sg\ino \rho_n G_n(x)\to +\ii.
\end{equation}
In particular the non radial profiles (that is (I) and (II) of Theorem \ref{profile-new}) share this property as $\lm_n\to 8\pi$. Instead the radial profiles (III) are characterized by $\lm_n\to \lm_\ii>8\pi$.  \\

 By defining $H_n$, $v_n$ and $c_n$ as in the proof of Theorem \ref{thm:mvpn}, we have that $v_n$ is a solution of
\eqref{mf:cvp-mvpn}, whose energy takes the form $E_n$ in \eqref{enrg:n}.
We assume that $v_n$ satisfies \eqref{bounded} and \eqref{explosion}, that is, the origin is the unique blow up point of $\vn$ in $\om$. Thus, in view of
Theorem \ref{massboundarycontrol}, by assuming also that $\sg\in(0,\frac12)$ and $8\pi\leq \lm_\ii< \frac{4\pi}{\sg}$,
we see that all the hypothesis of Theorems \ref{Concentration}, \ref{massboundarycontrol} and \ref{profile-new} are fulfilled.
In this situation we have,

\medskip

\begin{lemma} Under the assumptions of Theorem \ref{profile-new}, whence in particular,\\
$\sg\in(0,\frac12)$, $\lm_n \to \lm_\ii$ and $8\pi\leq \lm_\ii< \frac{4\pi}{\sg}$,
we have that \eqref{enrg:n} holds true.
\end{lemma}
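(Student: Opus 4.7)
The strategy is to use the profile estimates of Theorem \ref{profile-new} to show that both $\mathcal{E}(\rho_n)$ and $\sigma\int_\Omega\rho_n G_n$ diverge logarithmically at rates $\tfrac{|\log s_n|}{4\pi}$ and $\tfrac{\sigma|\log s_n|}{2\pi}$ respectively in a concentration scale $s_n\to 0$, so that their difference still diverges with a positive leading coefficient $\tfrac{1-2\sigma}{4\pi}>0$ because $\sigma<\tfrac12$ by Theorem \ref{massboundarycontrol}.

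First I would reformulate both terms. Using $\lambda_n\rho_n=H_ne^{v_n}$, $\lambda_n\psi_n=v_n+c_n$ (so that $c_n=-m_n+O(1)$ with $m_n=\inf_{B_1}v_n$ by \eqref{profilewn-n}), and the identity $\log H_n=-\sigma\lambda_n G_n$, one finds
\begin{equation*}
2\mathcal{E}(\rho_n)=\frac{1}{\lambda_n^2}\int_\Omega H_nv_ne^{v_n}+\frac{c_n}{\lambda_n},\qquad \sigma\int_\Omega\rho_nG_n=-\frac{1}{\lambda_n^2}\int_\Omega H_n\log H_n\cdot e^{v_n}.
\end{equation*}
Since $\log H_n=2\alpha_{n,\sigma}\log h_n+\log K_n$ with $\log K_n$ uniformly bounded and $\int_\Omega H_ne^{v_n}=\lambda_n$, the task reduces to obtaining the asymptotics of $\int_\Omega H_nv_ne^{v_n}$ and $\int_\Omega H_n\log h_n\cdot e^{v_n}$.

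For each of the alternatives (I), (II), (III) of Theorem \ref{profile-new}, I would rescale near $x_n$ to the natural bubble coordinates and use the explicit profiles \eqref{profilev:H}, \eqref{profilev:H1}, \eqref{profilevtilde:H1-IIb} to establish
\begin{equation*}
\int_\Omega H_nv_ne^{v_n}=\lambda_\infty v_n(x_n)+O(1),\qquad \int_\Omega H_n\log h_n\cdot e^{v_n}=\lambda_\infty\log\tau_n+O(1),
\end{equation*}
where $\tau_n=\epsilon_n$ in Case (I) (since $\delta_n,|x_n|\ll\epsilon_n$ so that $h_n\asymp\epsilon_n$ on the bubble), $\tau_n=|x_n|$ in Case (II) (since $\epsilon_n\lesssim|x_n|$ and the bubble is centered at $x_n$), and $\tau_n=\delta_n$ in Case (III) (since $\epsilon_n,|x_n|\lesssim\delta_n$). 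The contributions from $\Omega\setminus B_r(0)$ are $o(1)$ since $v_n\to-\infty$ uniformly there by \eqref{bounded} while $H_n$ stays bounded.

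Substituting back, and using the sharp expressions for $m_n$ (hence for $c_n$) supplied by \eqref{eps:I-A}, \eqref{eps:II-A}, \eqref{eps:II-B} together with $v_n(x_n)=-2(1+\alpha_{n,\sigma})\log\delta_n$, one arrives at
\begin{equation*}
2\mathcal{E}(\rho_n)=\psi_n(x_n)+o(\psi_n(x_n))+O(1),\qquad \sigma\int_\Omega\rho_nG_n=\frac{\sigma|\log\tau_n|}{2\pi}+O(1).
\end{equation*}
In Case (III) the estimate \eqref{eps:II-B} gives directly $\lambda_n\psi_n(x_n)=\tfrac{\lambda_\infty}{2\pi}|\log\delta_n|+o(|\log\delta_n|)$, so $E_n=\tfrac{(1-2\sigma)|\log\delta_n|}{4\pi}+o(|\log\delta_n|)\to+\infty$. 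In Cases (I) and (II) the analogous computation yields $\psi_n(x_n)=\tfrac{(1+2\sigma)|\log\delta_n|-2\sigma|\log\tau_n|}{2\pi}+o(\cdot)$, hence
\begin{equation*}
E_n=\frac{(1+2\sigma)|\log\delta_n|-4\sigma|\log\tau_n|}{4\pi}+o(|\log\delta_n|+|\log\tau_n|),
\end{equation*}
and since $\delta_n\leq\tau_n$ forces $|\log\delta_n|\geq|\log\tau_n|$, this is bounded below by $\tfrac{(1-2\sigma)|\log\tau_n|}{4\pi}$, which tends to $+\infty$ because $\tau_n\to0$ and $\sigma<\tfrac12$. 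The main obstacle is the algebraic bookkeeping in Cases (I) and (II), where two scales ($\delta_n$ and either $\epsilon_n$ or $|x_n|$) interact nontrivially in $c_n$; however, the sign of the leading coefficient is governed solely by the hypothesis $\sigma<\tfrac12$, which is already built into Theorem \ref{massboundarycontrol}.
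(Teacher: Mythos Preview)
Your approach is essentially the same as the paper's: both reduce the problem to computing $\int_\Omega\rho_n(v_n+c_n+2\alpha_{n,\sigma}\log(\epsilon_n^2+|x|^2))$ in bubble coordinates via the profile formulas of Theorem \ref{profile-new}, and both find that the leading term is $\frac{1-2\sigma}{4\pi}|\log s_n|$ for the appropriate scale $s_n$. The paper keeps the two contributions together and extracts a single divergent logarithmic factor, while you separate $\mathcal{E}(\rho_n)$ from $\mathcal{E}_{\sigma,n}(\rho_n)$ and track two scales $(\delta_n,\tau_n)$; your bookkeeping inequality $(1+2\sigma)|\log\delta_n|-4\sigma|\log\tau_n|\geq(1-2\sigma)|\log\tau_n|$ recovers exactly the paper's conclusion.

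One point deserving more care: your intermediate claims $\int_\Omega H_n v_n e^{v_n}=\lambda_n v_n(x_n)+O(1)$ and $\int_\Omega H_n\log h_n\,e^{v_n}=\lambda_n\log\tau_n+O(1)$ are correct but not immediate from the profile formulas alone. Both require verifying that the annular contributions (outside the core bubble but inside $B_r$) are bounded, which in Cases (I)--(II) hinges on the decay exponent $2\alpha_{n,\sigma}-\tfrac{\lambda_n}{2\pi}+1\to 4\sigma-3<-1$, and in Case (III) on the analogous bound with $\lambda_\infty>8\pi$. The paper carries out precisely these estimates (its integrals $I_{n,1}$, $I_{n,2}^{(1)}$, $I_{n,2}^{(2)}$ and $\widetilde I_n$), so your sketch and the paper's proof coincide once the details are filled in.
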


\proof

We proceed to the discussion of the cases in the same order as in Theorem \ref{profile-new}, which we assume without loss of generality
to hold with $\om$ replacing $B_1$. Let us also remark that, since $\psi_n$ is a solution of \eqref{mf:n.intro},
then $v_n$ defined as in \eqref{vn:def} is a solution of \eqref{mf:cvp-mvpn} and then in particular in view of \eqref{beqlm}, we have that
\begin{equation}\label{profilewn}
v_n(x)+c_n={\lm_n}\psi_n(x)=\beta G_\om(x,0)+o(1),\quad x\in \om\setminus B_r(0),
\end{equation}
for any $r$ small enough, where $c_n$ is defined as in \eqref{cn:def}. To match the notations of Theorem \ref{profile-new}, we just set
\begin{equation*}
K_n(x)=e^{-\lm_n \sg R_{n}(x)},
\end{equation*}
and recall that, in view of \eqref{profilewn-n}, we have that,
\begin{equation}\label{cnmn}
c_n=-m_n.
\end{equation}
\begin{itemize}
 \item {\bf CASE} (I): there exists a subsequence such that $\tfrac{\epsilon_n}{t_n}\to+\infty$.
\end{itemize}

Recall that in this case we have \eqref{profilew:H-lm}, that is
\begin{equation*}
w_n(z)=\log\left(\dfrac{\theta_n^{2(1+\al_{n,\sg})}}
{\left(1+\bar\gamma_n\theta_n^{2(1+\al_{n,\sg})}|z-\frac{x_n}{\epsilon_n}|^{\frac{\lm_n}{4\pi}}\right)^2}\right)+O(1),\quad z\in B^{(n)}=B_{\frac{r}{\eps_n}},
\end{equation*}
where,
$$
\theta_n^{2(1+\al_{n,\sg})}=e^{\dis w_n(\tfrac{x_n}{\epsilon_n})}=e^{\dis v_n(x_n)}\epsilon_n^{2(1+\al_{n,\sg})}=
\left(\tfrac{\epsilon_n}{\delta_n}\right)^{2(1+\al_{n,\sg})}
$$
and $\gamma_n$ as in the claim of Theorem \ref{profile-new}. Recalling that $V_n(z)=(1+|z|^2)^{\bns}K_n(z)$ and
in view of \eqref{eps:I-A}, \eqref{cnmn} and \eqref{enrg:n}, we have that,
\begin{align*}
& 2\lm_\ii E\geq \liminf\limits_{n\to +\ii}2\lm_n\left(\mathcal{E}(\rho_{n})-\mathcal{E}_{\sg,n}(\rho_{n})\right)=
\liminf\limits_{n\to +\ii}\left( \lm_n\ino \rho_nG[\rho_n]-2\sg\lm_n\ino \rho_n G_n(x)\right)=\\
& \liminf\limits_{n\to +\ii}\ino \rho_n \left( \lm_n \psi_n -2\sg\lm_n G_n(x)\right)=
\liminf\limits_{n\to +\ii}\ino \rho_n \left( v_n+c_n -2\sg\lm_n G_n(x)\right)=\\
& O(1)+\liminf\limits_{n\to +\ii}\ino \rho_n \left( v_n+c_n +\frac{\lm_n}{2\pi}\sg \log(\epsilon_n^2+|x|^2)\right)=\\
& O(1)+\liminf\limits_{n\to +\ii}\int\limits_{B_r(0)} \rho_n \left( v_n+c_n +2\al_{n,\sg}\log(\epsilon_n^2+|x|^2)\right)=\\
& O(1)+\liminf\limits_{n\to +\ii}\lm_n^{-1}\int\limits_{B^{(n)}} V_n(z)e^{\dis w_n(z)}
\left(v_n(\epsilon_n z)+c_n +2\al_{n,\sg}\log(\epsilon_n^2+\epsilon_n^2|z|^2)\right)=\\
& O(1)+\liminf\limits_{n\to +\ii}\left(\lm_n^{-1}\int\limits_{B_R(\frac{x_n}{\epsilon_n})} V_n(z)e^{\dis w_n(z)}
v_n(\epsilon_n z)+\int\limits_{B^{(n)}\backslash B_R(\frac{x_n}{\epsilon_n})} V_n(z)e^{\dis w_n(z)}
v_n(\epsilon_n z)+\right.\\
&\hspace{5cm}\left.+\int\limits_{B^{(n)}} V_n(z)e^{\dis w_n(z)}c_n+2\al_{n,\sg}\int\limits_{B^{(n)}}
V_n(z)e^{\dis w_n(z)}\log(\epsilon_n^2+\epsilon_n^2|z|^2)\right)=
\end{align*}
\begin{align*}
& O(1)+\liminf\limits_{n\to +\ii}\left(\lm_n^{-1}\int\limits_{B_R(\frac{x_n}{\epsilon_n})} V_n(z)e^{\dis w_n(z)}
\log\left(\tfrac{\delta_n^{-2(1+\bns)}}{(1+\bar\gamma_n\theta_n^{2(1+\bns)}|z-\frac{x_n}{\epsilon_n}|^2)^2}\right)+\right.\\
&\hspace{5cm}+\int\limits_{B^{(n)}\backslash B_R(\frac{x_n}{\epsilon_n})} V_n(z)e^{\dis w_n(z)}
\log\left(\tfrac{\delta_n^{-2(1+\bns)}}{(1+\bar\gamma_n\theta_n^{2(1+\bns)}|z-\frac{x_n}{\epsilon_n}|^{\frac{\lm_n}{4\pi}})^2}\right)+\\
&\hspace{5cm}\left.+\int\limits_{B^{(n)}} V_n(z)e^{\dis w_n(z)}c_n+2\al_{n,\sg}\int\limits_{B^{(n)}}
V_n(z)e^{\dis w_n(z)}\log(\epsilon_n^2+\epsilon_n^2|z|^2)\right)\geq\\
\end{align*}
\begin{align*}
&\mbox{(where, for a suitable $R>0$, we have used the pointwise estimate \eqref{profilew} and \eqref{profilev:H})}\\
& O(1)+\liminf\limits_{n\to +\ii}2\lm_n^{-1}\int\limits_{B^{(n)}} V_ne^{w_n} \log\left(\tfrac{\epsilon_n^{2(1+\al_{n,\sg})+2\bns-\frac{\lm_n}{4\pi}}}{\delta_n^{2(1+\al_{n,\sg})}}\right) +
\liminf\limits_{n\to +\ii} (I_{n,1}+I_{n,2}),
\end{align*}
with
\begin{equation*}
I_{n,1}:=\lm_n^{-1}\int\limits_{B_R(\frac{x_n}{\epsilon_n})}\frac{\theta_n^{2(1+\bns)}(1+|z|^2)^{\bns}e^{-\sigma\lm_n R_\O(\epsilon_n z,0)}}{(1+\bar\gamma_n\theta_n^{2(1+\bns)}|z-\frac{x_n}{\epsilon_n}|^2)^2}\log\left(\frac{(1+|z|^2)^{2\al_{n,\sg}}}
{\left(1+\bar\gamma_n\theta_n^{2(1+\al_{n,\sg})}|z-\frac{x_n}{\epsilon_n}|^2\right)^2}\right)
\end{equation*}
and
\begin{equation*}
I_{n,2}:=\lm_n^{-1}\int\limits_{B^{(n)}_1}\frac{\theta_n^{2(1+\bns)}(1+|z|^2)^{\bns}e^{-\sigma\lm_n R_\O(\epsilon_n z,0)}}{(1+\bar\gamma_n\theta_n^{2(1+\bns)}|z-\frac{x_n}{\epsilon_n}|^{\frac{\lm_n}{4\pi}})^2}\log\left(\frac{(1+|z|^2)^{2\al_{n,\sg}}}
{\left(1+\bar\gamma_n\theta_n^{2(1+\al_{n,\sg})}|z-\frac{x_n}{\epsilon_n}|^{\frac{\lm_n}{4\pi}}\right)^2}\right),
\end{equation*}
where $B^{(n)}_1=B^{(n)}\setminus B_{R}(\frac{x_n}{\epsilon_n})$.

We estimate $I_{n,1}$ as follows,
\begin{align*}
 |I_{n,1}|&\leq C_1\int\limits_{B_{R}(0)}\dfrac{\theta_n^{2(1+\al_{n,\sg})}(3R^2)^{\al_{n,\sg}}}
{\left(1+\bar\gamma_n\theta_n^{2(1+\al_{n,\sg})}|y|^2\right)^2}\left|\log\left(\frac{(3R^2)^{2\al_{n,\sg}}}
{\left(1+\bar\gamma_n\theta_n^{2(1+\al_{n,\sg})}|y|^2\right)^2}\right)\right| \\
&\leq C_2\int\limits_{|x|\leq R\theta_n^{1+\al_{n,\sg}}}\dfrac{\log\left({(3R^2)^{2\al_{n,\sg}}\left(1+\bar\gamma_n|y|^2\right)^2}
\right)}
{\left(1+\bar\gamma_n|y|^2\right)^2}=O(1).
\end{align*}

On the other side, concerning $I_{n,2}$ we can write,
\begin{align*}
 I_{n,2}&=I_{n,2}^{(1)}-I_{n,2}^{(2)},\quad \mbox{where}\\
& I_{n,2}^{(1)}=2\bns\lm_n^{-1}\int\limits_{B^{(n)}_1}\dfrac{\theta_n^{2(1+\al_{n,\sg})}(1+|z|^2)^{\al_{n,\sg}}e^{-\sigma\lm_nR_\O(\epsilon_nz,0)}}
{\left(1+\bar\gamma_n\theta_n^{2(1+\al_{n,\sg})}|z-\frac{x_n}{\epsilon_n}|^{\frac{\lm_n}{4\pi}}\right)^2}\log\left({1+|z|^2}\right),\\
& I_{n,2}^{(2)}=2\lm_n^{-1}\int\limits_{B^{(n)}_1}\dfrac{\theta_n^{2(1+\al_{n,\sg})}(1+|z|^2)^{\al_{n,\sg}}e^{-\sigma\lm_nR_\O(\epsilon_nz,0)}}
{\left(1+\bar\gamma_n\theta_n^{2(1+\al_{n,\sg})}|z-\frac{x_n}{\epsilon_n}|^{\frac{\lm_n}{4\pi}}\right)^2}\log{\left(1+\bar\gamma_n\theta_n^{2(1+\al_{n,\sg})}|z-\tfrac{x_n}{\epsilon_n}|^{\frac{\lm_n}{4\pi}}\right)}.
\end{align*}
Firstly, we notice that $I_{n,2}^{(1)}\geq0$. On the other hand, since in $B^{(n)}_1$ we have,
$$
\frac R4\leq \frac12 |z|\leq |z-\frac{x_n}{\epsilon_n}|\leq 2|z|,
$$
for any $n$ large enough, setting $B^{(n)}_2=B^{(n)}\setminus B_{\frac R2}(0)$ we can estimate $I_{n,2}^{(2)}$ as follows,

\begin{align*}
&0\leq I_{n,2}^{(2)}\leq 2 C\int\limits_{B^{(n)}_1}\dfrac{\theta_n^{2(1+\al_{n,\sg})}|z|^{2\al_{n,\sg}}}
{\left(1+2^{-\frac{\lm_n}{4\pi}}\bar\gamma_n\theta_n^{2(1+\al_{n,\sg})}|z|^{\frac{\lm_n}{4\pi}}\right)^2}\log
{\left(1+2^{\frac{\lm_n}{4\pi}}\bar\gamma_n\theta_n^{2(1+\al_{n,\sg})}|z|^{\frac{\lm_n}{4\pi}}\right)} \\
& \leq  C\int\limits_{\frac{R}{2}\theta_n^{1+\al_{n,\sg}}\leq|y|\leq \frac{r}{\epsilon_n}\theta_n^{1+\al_{n,\sg}}}\dfrac{\theta_n^{-{2\al_{n,\sg}}({1+\al_{n,\sg}})}|y|^{2\al_{n,\sg}}\log\left({(1+5\bar\gamma_n\theta_n^{2(1+\bns)-\frac{\lm_n}{4\pi}(1+\bns)}|y|^{\frac{\lm_n}{4\pi}})}\right)}
{\left(1+\frac15\bar\gamma_n\theta_n^{2(1+\bns)-\frac{\lm_n}{4\pi}(1+\bns)}|y|^{\frac{\lm_n}{4\pi}}\right)^2}\\
& \leq  \tilde C\int\limits_{\frac{R}{2}\theta_n^{1+\al_{n,\sg}}\leq|y|\leq \frac{r}{\epsilon_n}\theta_n^{1+\al_{n,\sg}}}\dfrac{\theta_n^{-{2\al_{n,\sg}}({1+\al_{n,\sg}})}(\log \theta_n)|y|^{2\al_{n,\sg}}\log|y|}
{\theta_n^{4(1+\bns)-\frac{\lm_n}{2\pi}(1+\bns)}|y|^{\frac{\lm_n}{2\pi}}}\\
&=o\left(\theta_n^{-2({1+\al_{n,\sg}})}(\log\theta_n)^2\right),
\end{align*}
where we used once more \eqref{decay}. Therefore we deduce from \eqref{new:n} that,
$$
2\lm_\ii E\geq O(1)+\liminf\limits_{n\to +\ii} \left(\log\left(\tfrac{\epsilon_n^{2(1+\al_{n,\sg})+2\bns-\frac{\lm_n}{4\pi}}}{\delta_n^{2(1+\al_{n,\sg})}}\right)\right)+\liminf\limits_{n\to +\ii} (I_{n,1}+ I_{n,2}^{(1)}-I_{n,2}^{(2)})\geq
$$
\beq\label{enrg:bll0}
O(1)+\liminf\limits_{n\to +\ii} \left(\log\left(\left(\tfrac{\epsilon_n}{\delta_n}\right)^{2(1+\al_{n,\sg})}\epsilon_n^{2\bns-\frac{\lm_n}{4\pi}}\right)\right)\to +\ii,
\eeq
which is \eqref{enrg:n}.
This fact concludes the discussion of {CASE} (I).\\ 

Next, we discuss the following,
\begin{itemize}
 \item {\bf CASE} (II): there exists a subsequence such that $\tfrac{\epsilon_n}{t_n}\leq C$ and $\tfrac{|x_n|}{\delta_n}\to+\infty$.
\end{itemize}
Recall that in this case we have,
\begin{equation}\label{profilew:H1-new}
 \bar w_n(z)=\log\left(\dfrac{\bar \theta_n^{2(1+\al_{n,\sg})}}
{\left(1+\gamma_n\bar \theta_n^{2(1+\al_{n,\sg})}|z-\frac{x_n}{|x_n|}|^{\frac{\lm_n}{4\pi}}\right)^2}\right)+O(1),\quad z\in D^{(n)}=B_{\frac{r}{|x_n|}},
\end{equation}
where
$$
\bar \theta_n^{2(1+\al_{n,\sg})}=e^{\dis \bar w_n(\tfrac{x_n}{|x_{n}|})}=e^{\dis v_n(x_n)}|x_{n}|^{2(1+\al_{n,\sg})}=
\left(\tfrac{|x_{n}|}{\delta_n}\right)^{2(1+\al_{n,\sg})}
$$
and $\gamma_n$ as in the statement of Theorem \ref{profile-new}. Essentially by the same evaluations
worked out above about {CASE} (I), where this time we use \eqref{cnmn} and \eqref{eps:II-A}, we deduce that,
$$
2\lm_\ii E\geq O(1)+\liminf\limits_{n\to +\ii}4\lm_n^{-1}(8\pi+o(1))\log\left(\tfrac{|x_n|^{2\al_{n,\sg}}}{\delta_n^{1+\al_{n,\sg}}}\right)\to +\ii,
$$
which proves \eqref{enrg:n}. We omit the details to avoid repetitions. This fact concludes the discussion of CASE (II).\\

At last we have the following,
\begin{itemize}
 \item {\bf CASE} (III): there exists a subsequence such that $\tfrac{\epsilon_n}{t_n}\leq C$ and $\tfrac{|x_n|}{\delta_n}\leq C$.
\end{itemize}
Recall that in this case we have \eqref{profilevtilde:H1-IIb}, that is,
\begin{equation*}
\widetilde w_n(y)=\widetilde U_n(y) + O(1),\quad y\in D_n=B_{\frac{r}{\dt_n}},
\end{equation*}
where, for any large $R\geq 1$ we have,
\begin{equation*}
\widetilde U_n(y)=\graf{\widetilde w(y)+O(1),\quad |y|\leq R, \\ -\frac{\lm_n}{2\pi}\log(|y|) + O(1),\quad R\leq  |y|\leq r\delta_n^{-1},}
\end{equation*}
where $\widetilde w$ is the unique solution of \eqref{profile:tildew}. In view of \eqref{cnmn}, \eqref{eps:II-B} and \eqref{enrg:n}, we have that,
\begin{align*}
& 2\lm_\ii E\geq \liminf\limits_{n\to +\ii}2\lm_n\left(\mathcal{E}(\rho_{n})-\mathcal{E}_{\sg,n}(\rho_{n})\right)=
\liminf\limits_{n\to +\ii}\left( \lm_n\ino \rho_nG[\rho_n]-2\sg\lm_n\ino \rho_n G_n(x)\right)=\\
& \liminf\limits_{n\to +\ii}\ino \rho_n \left( \lm_n \psi_n -2\sg\lm_n G_n(x)\right)=
\liminf\limits_{n\to +\ii}\ino \rho_n \left( v_n+c_n -2\sg\lm_n G_n(x)\right)=\\
& O(1)+\liminf\limits_{n\to +\ii}\ino \rho_n \left( v_n+c_n +\frac{\lm_n}{2\pi}\sg \log(\epsilon_n^2+|x|^2)\right)\geq\\
& O(1)+\liminf\limits_{n\to +\ii}\int\limits_{B_r(0)} \rho_n \left( v_n+c_n +2\al_{n,\sg}\log(\epsilon_n^2+|x|^2)\right)=\\
& O(1)+\liminf\limits_{n\to +\ii}\lm_n^{-1}\int\limits_{D_n}\widetilde V_n(y)e^{\dis \widetilde w_n(y)}
\left(v_n(\delta_n y)+c_n +2\al_{n,\sg}\log(\epsilon_n^2+\delta_n^2|y|^2)\right)=\\
& O(1)+\liminf\limits_{n\to +\ii}\lm_n^{-1}\int\limits_{D_n} \widetilde V_n e^{\widetilde w_n}\left(\frac{\widetilde \beta}{4\pi(1+\bns)}-1+\frac{1-\bns}{1+\bns}\right)v_n(x_n)
+\liminf\limits_{n\to +\ii} \widetilde I_n,\\
& \; \mbox{where from \eqref{profilewtilde:H1}, for some large enough $R\geq 1$, we have that,}\\
& \widetilde I_n=O(1)+\lm_n^{-1}\int\limits_{D_n\setminus B_R(0)}\widetilde V_n(y)e^{\dis \widetilde w_n(y)}
\log\left(\frac{(\frac{\epsilon_n^2}{\delta_n^2}+|y|^2)^{\al_{n,\sg}}}
{\left(1+|y|\right)^{\frac{\widetilde{\beta}}{2\pi}}}\right).
\end{align*}

However, in view of $\bis< 1$, from \eqref{masstilde0} the last integral is bounded and in particular, for some $c>0$ we have that,
$$
\frac{\widetilde \beta}{4\pi(1+\bns)}-1+\frac{1-\bns}{1+\bns}\geq c\varepsilon_0>0,
$$
whence we deduce that
$$
2\lm_\ii E\geq O(1)+\lm_n^{-1}(\widetilde \beta +o(1))c\varepsilon_0 v_n(x_n)\to +\ii,
$$
which proves \eqref{enrg:n} in this case as well. This fact concludes the discussion of CASE (III).
\finedim

\bigskip
\bigskip

\section{Appendix}

\subsection{Known results about problem \eqref{profile:tildew}}\hfill\\
We list some well known facts about the planar problem \eqref{profile:tildew}. For $\epsilon_0=0$, the set of solutions has been completely classified in \cite{pt} for every $\bis>-1$ and $\widetilde\beta=8\pi(1+\bis)$.
Moreover, if $\bis\notin\N$, the solution is necessarily unique and radially symmetric.\\
On the other hand, for $\epsilon_0>0$ the situation is much more involved (see \cite{Lin1}):
\begin{itemize}
\item If $\bis\in (-1,1]$, problem \eqref{profile:tildew} is solvable if and only if
$$\widetilde\beta\in (8\pi \min\{1, 1+\bis\}, 8\pi \max\{1, 1+\bis\})$$
and the solution is unique, radially symmetric and non degenerate.
\item If $\bis>1$, there exists a unique, radial and non degenerate solution, whenever
$$\widetilde\beta\in(8\pi\bis,8\pi(1+\bis)).$$
\end{itemize}
Nevertheless, in a more general context it has been proved in \cite{GM1}  that, if $\widetilde \beta\in(0,16\pi)$, then any solution of \eqref{profile:tildew} is  radially symmetric. In general the problem is still open, and we refer to \cite{det} for further details and to the refined estimates in \cite{llty} (Proposition 2.1), where the authors obtained some results for \eqref{profile:tildew} in the case $\bis>1$.\\
We will need in particular the following facts  (\cite{cl2}, \cite{det}).
\begin{lemma}\label{massstrange}
 Let $\phi$ be a solution of the planar problem
 \begin{equation*}
 \begin{cases}
 -\D \phi(x)= (t_0^2+|x|^2)^{\alpha}e^{\dis \phi}\quad\text{in}\quad \R^2, \\
 \underset{\R^2}\int(t_0^2+|x|^2)^{\alpha}e^{\dis \phi}<+\infty,
 \end{cases}
\end{equation*}
for $t_0\geq0$ and $\alpha>-1$ and set,
\begin{equation*}
 \beta=\tfrac{1}{2\pi}\underset{\R^2}\int(t_0^2+|x|^2)^{\alpha}e^{\dis \phi}.
\end{equation*}
Then,
\begin{itemize}
 \item $(t_0^2+|x|^2)^{\alpha}e^{\dis \phi}$ is bounded in $\R^2$,
 \item There exists a constant $C$ such that
 \begin{equation}\label{asymptotic}
  -\beta\ln(|x|+1)-C\leq \phi(x)\leq-\beta\ln(|x|+1)+C,
 \end{equation}
 \item The following identity holds,
 \begin{equation}\label{relation}
  2\alpha \underset{\R^2}\int|x|^2(t_0^2+|x|^2)^{\alpha-1}e^{\dis \phi}=\pi\beta(\beta-4).
 \end{equation}
 \item If $-1<\alpha< 0$, then
 \begin{equation}
 \label{massnegative}
  8\pi(1+\alpha)\leq \underset{\R^2}\int(t_0^2+|x|^2)^{\alpha}e^{\dis \phi}<8\pi,
 \end{equation}
 where the l.h.s equality holds if and only if $t_0=0$.\\
 If $\alpha\geq 0$, then
 \begin{equation}
 \label{masspositive}
\max\{8\pi,4\pi(1+\alpha)\}\leq \underset{\R^2}\int(t_0^2+|x|^2)^{\alpha}e^{\dis \phi}\leq  8\pi(1+\alpha),
 \end{equation}
 where the l.h.s. equality holds if and only if $\alpha=0$ while the right hand side equality holds if and only if $t_0=0$.
\end{itemize}
\end{lemma}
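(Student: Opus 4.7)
The overall plan is to follow the Chen--Li type argument, adapted to the weighted equation with $(t_0^2+|x|^2)^{\alpha}$ as in \cite{cl2,det}. The four assertions are logically linked, and I would tackle them in the following order: first establish the asymptotic \eqref{asymptotic}; deduce from it the boundedness of $(t_0^2+|x|^2)^{\alpha}e^{\phi}$; then derive the Pohozaev identity \eqref{relation}; and finally obtain the mass bounds \eqref{massnegative}--\eqref{masspositive} as algebraic consequences of \eqref{relation} together with the elementary inequality $|x|^2\leq t_0^2+|x|^2$.

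For the asymptotic estimate, I would write Green's representation in the form
\begin{equation*}
\phi(x)=-\tfrac{1}{2\pi}\int_{\R^2}\ln|x-y|\,(t_0^2+|y|^2)^{\alpha}e^{\phi(y)}\,dy+h(x),
\end{equation*}
where $h$ is a harmonic function on $\R^2$. Expanding the logarithm as $|x|\to\infty$ and using $\int Ve^{\phi}=2\pi\beta<\infty$, the potential term equals $-\beta\ln|x|+\bop(\ln|x|)$; the core point is then to prove that $h$ is bounded above and below, so by Liouville's theorem it is constant and \eqref{asymptotic} follows. The upper bound on $h$ is obtained from the superharmonicity of $\phi$ combined with the mean-value inequality on large balls, and the matching lower bound comes from Brezis--Merle style potential estimates, which yield $e^{\phi}\in L^{q}_{\rm loc}(\R^2)$ for any $q\in[1,\infty)$, hence Harnack-type control on $\phi$ and therefore on $h$. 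Once \eqref{asymptotic} is in hand, global boundedness of $(t_0^2+|x|^2)^{\alpha}e^{\phi}$ reduces to observing that finiteness of $\int Ve^{\phi}$ combined with the lower asymptotic $\phi\geq-\beta\ln|x|-C$ forces integrability at infinity of $|x|^{2\alpha-\beta}$, i.e.\ $\beta>2(1+\alpha)$, whence $(t_0^2+|x|^2)^{\alpha}(1+|x|)^{-\beta}\leq C$ throughout $\R^2$.

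The identity \eqref{relation} then follows from the standard Pohozaev identity on $B_R$:
\begin{equation*}
\int_{\p B_R}\!\Big((x,\nu)\tfrac{|\n\phi|^2}{2}-(\nu,\n\phi)(x,\n\phi)\Big)=\int_{\p B_R}(x,\nu)Ve^{\phi}-\int_{B_R}(2V+x\cdot\n V)e^{\phi}.
\end{equation*}
Using $\n\phi\sim-\beta x/|x|^2$ from \eqref{asymptotic}, the left-hand boundary integral converges to $-\pi\beta^2$; the right-hand boundary integral is $O(R^{2+2\alpha-\beta})\to 0$, since $\beta>2(1+\alpha)$; and $2V+x\cdot\n V=2V+2\alpha|x|^{2}(t_0^{2}+|x|^{2})^{\alpha-1}$. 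Letting $R\to+\infty$ gives $-\pi\beta^{2}=-4\pi\beta-2\alpha\int_{\R^{2}}|x|^{2}(t_0^{2}+|x|^{2})^{\alpha-1}e^{\phi}$, which is \eqref{relation}.

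Finally, setting $\gamma:=\int_{\R^{2}}|x|^{2}(t_0^{2}+|x|^{2})^{\alpha-1}e^{\phi}$, the inequality $|x|^{2}\leq t_{0}^{2}+|x|^{2}$ (equality iff $t_0=0$) yields $\gamma\leq 2\pi\beta$, and plugging into $2\alpha\gamma=\pi\beta(\beta-4)$ gives, according to the sign of $\alpha$, both $8\pi(1+\alpha)\leq 2\pi\beta\leq 8\pi$ in \eqref{massnegative} and $8\pi\leq 2\pi\beta\leq 8\pi(1+\alpha)$ in \eqref{masspositive}; the stronger lower bound $2\pi\beta\geq 4\pi(1+\alpha)$ for $\alpha>1$ is already built into the integrability argument $\beta>2(1+\alpha)$ used above, while the case of equality in each two-sided bound is determined by when $\gamma=2\pi\beta$ or $2\alpha\gamma=0$. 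The main obstacle of the whole proof is the two-sided asymptotic \eqref{asymptotic}, since all remaining assertions are essentially algebraic consequences of it; the delicate part there is the bootstrap leading to the global boundedness of the harmonic correction $h$, which forces it to be constant via Liouville's theorem.
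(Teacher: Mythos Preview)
The paper does not actually prove this lemma; it states it as ``known facts'' and cites \cite{cl2,det}. Your outline is precisely the Chen--Li argument from \cite{cl2}, adapted to the weight $(t_0^2+|x|^2)^\alpha$ as in \cite{det}: potential representation plus Liouville's theorem for the asymptotic, then the Pohozaev identity on $B_R$ with $R\to\infty$, and finally the algebraic manipulation using $\gamma\leq 2\pi\beta$ (equality iff $t_0=0$) to extract the mass bounds. Your Pohozaev computation and the sign discussion for $\alpha\lessgtr 0$ are correct, and the observation that integrability forces $\beta>2(1+\alpha)$, hence $2\pi\beta>4\pi(1+\alpha)$, is exactly how the sharp lower bound for $\alpha>1$ arises. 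So your proposal matches the approach the paper is relying on.
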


\bigskip
\bigskip

\subsection{Proof of the estimate \eqref{profilew:H}}\label{app:est-1}
\subsubsection{Initial estimate for \eqref{profilew:H}}\hfill\\

Let $r'\in(0,r)$. We use Green representation formula for $v_n$ and see that, for $x\in B_{r'}(0)$, we have that
\[
 v_n(x)=\tfrac{1}{2\pi}\int_{B_r}\log(\tfrac{1}{|x-y|})H_n e^{v_n}\,dy+m_n+\Psi_n(x),
\]
where $m_n=\inf_{\partial B_{r}}\,v_n$ and $\Psi_n\in C^{1}(\bar B_{r'})$. Then, for $|x|\leq\tfrac{r'}{\epsilon_n}$,
\[
 w_n(x)=\tfrac{1}{2\pi}\int_{B_{\frac{r}{\epsilon_n}}}\log(\tfrac{1}{|x-z|})V_n e^{w_n}\,dz-\left(\tfrac{M_n}{2\pi}-2(1+\bns)\right)\log\epsilon_n+m_n+\Psi_n(\epsilon_nx).
\]

Now, let $z_0\in\p B_{R_0}$, for a fixed $R_0>1$, then we have,
\[
 w_n(z_0)=\tfrac{1}{2\pi}\int_{B_{\frac{r}{\epsilon_n}}}\log(\tfrac{1}{|z-z_0|})V_n e^{w_n}\,dz-\left(\tfrac{M_n}{2\pi}-2(1+\bns)\right)\log\epsilon_n+m_n+\Psi_n(\epsilon_n z_0)
\]
and, by \eqref{profilew}, we also have,
\[
 w_n(z_0)=-2(1+\bns)\log\theta_n+O(1).
\]
Hence, it easy to see that
\begin{equation}
 \label{Greenrepresentation:wn}
 w_n(x)=\tfrac{1}{2\pi}\int_{B_{\frac{r}{\epsilon_n}}}\log(\tfrac{|z-z_0|}{|x-z|})V_n e^{w_n}\,dz-2(1+\bns)\log\theta_n+O(1).
\end{equation}
Now, let us fix $\epsilon\in (0,1)$ and, by \eqref{decay}, $R$ large enough such that $4R_0\leq R\leq \frac{r}{4\epsilon_n}$ and
\[
\int_{B_{\frac{R}{2}}(z_0)}V_ne^{w_n}\geq 8\pi-\pi\epsilon.
\]
For every $2R\leq|x|\leq \frac{r}{2\epsilon_n}$ and $|z-z_0|<\tfrac{R}{2}$, with $|z_0|=R_0$, we have,
\[
\tfrac{|z-z_0|}{|x-z|}\leq\tfrac{R}{|x|}\leq1.
\]
Then, for every $2R\leq|x|\leq \frac{r}{2\epsilon_n}$,
\begin{align*}
\tfrac{1}{2\pi}\int_{B_{\frac{R}{2}}(z_0)}\log(\tfrac{|z-z_0|}{|x-z|})V_n e^{w_n}\,dz&\leq \tfrac{1}{2\pi}\log(\tfrac{2R}{|x|})\int_{B_{\frac{R}{2}}(z_0)}V_n e^{w_n}\,dz\\
&\leq (4-\tfrac{\epsilon}{2})\log(\tfrac{2R}{|x|}).
\end{align*}
Moreover, for every $2R\leq|x|\leq \frac{r}{2\epsilon_n}$,
\begin{align}\nonumber
\tfrac{1}{2\pi}\int_{B_{\frac{|x|}{4}}(z_0)}&\log(\tfrac{|z-z_0|}{|x-z|})V_n e^{w_n}\,dz\\\nonumber
&\leq \tfrac{1}{2\pi}\int_{B_{\frac{R}{2}}(z_0)}\log(\tfrac{|z-z_0|}{|x-z|})V_n e^{w_n}\,dz+ \underbrace{\tfrac{1}{2\pi}\int_{{\frac{R}{2}\leq|z-z_0|\leq\frac{|x|}{4}}}\log(\tfrac{|z-z_0|}{|x-z|})V_n e^{w_n}\,dz}_{\leq0}\\
&\leq(4-\tfrac{\epsilon}{2})\log(\tfrac{2R}{|x|})\leq-(4-\tfrac{\epsilon}{2})\log(|x|)+ C_{1,R},\label{firstpiece1}
\end{align}
for a suitable constant $C_{1,R}$. Now, for $2R\leq|x|\leq \frac{r}{2\epsilon_n}$, we have that
\begin{align*}
 &\int_{B_{\frac{|x|}{4}}(x)}\log(\tfrac{|z-z_0|}{|x-z|})V_n e^{w_n}=\\
&= \underbrace{\int\limits_{B_{\frac{|x|}{4}}(x)}\log(|z-z_0|)V_n e^{w_n}}_{(A)}+  \underbrace{\int\limits_{|z-x|\leq\frac{1}{|x|^{1+\alpha_\infty}}}\log(\tfrac{1}{|x-z|})V_n e^{w_n}}_{(B)}+\underbrace{\int\limits_{\frac{1}{|x|^{1+\alpha_\infty}}\leq|z-x|\leq\frac{|x|}{4}}\log(\tfrac{1}{|x-z|})V_n e^{w_n}}_{(C)},
\end{align*}
where $\alpha_\infty=\al_{\ii,\sg}$.
Then,
\begin{align*}
 (A)+(C)&\leq (2+\alpha_\infty)\log(|x|)\int_{|z-x|\leq\frac{|x|}{4}}V_ne^{w_n}+O(1) \\
 &\leq (2+\alpha_\infty)\log(|x|)\int_{1\leq|z|\leq\frac{r}{\epsilon_n}}V_ne^{w_n}+O(1)\\
 &\leq  \tfrac{\epsilon}{4}\log|x|+O(1),
\end{align*}
where we have used the fact that, for $n$ large enough,
\[
 \int_{1\leq|z|\leq\frac{r}{\epsilon_n}}V_ne^{w_n}\leq \tfrac{\epsilon}{4(2+\alpha_\infty)}.
\]
On the other hand, by using \eqref{bdd:w}, we have that, putting $\alpha_n=\al_{n,\sg}$,
\begin{align*}
 (B)&\leq C_1 \int\limits_{\{|z-x|\leq\frac{1}{|x|^{1+\alpha_\infty}}\}}\log(\tfrac{1}{|x-z|})(1+|z|^2)^{\alpha_n}\\
 &\leq C_2 \int\limits_{\{|z-x|\leq\frac{1}{|x|^{1+\alpha_\infty}}\}}\log(\tfrac{1}{|x-z|})|z|^{2\alpha_n}\\
 &\leq C_3 (\tfrac{1}{|x|^{1+\alpha_\infty}}+|x|)^{2\alpha_n} \int\limits_{\{|z-x|\leq\frac{1}{|x|^{1+\alpha_\infty}}\}}\log(\tfrac{1}{|x-z|})
 \end{align*}
 \begin{align*}
 &\leq C_4 (\tfrac{1}{|x|^{1+\alpha_\infty}}+|x|)^{2\alpha_n} \tfrac{\log|x|+1}{|x|^{2(1+\alpha_\infty)}}\\
 &\leq C_5 (\tfrac{1}{R})^{2(1+\alpha_\infty)-2\alpha_n}\log|x|+O(1)
\end{align*}
and, by choosing $n$ and $R$ large enough, we have that
\begin{align*}
 (B)&\leq \tfrac{\epsilon}{4}\log|x|+O(1),
\end{align*}
which implies that
\begin{equation}
 \label{secondpiece1}
 \int_{B_{\frac{|x|}{4}}(x)}\log(\tfrac{|z-z_0|}{|x-z|})V_n e^{w_n}\leq \tfrac{\epsilon}{2}\log|x|+O(1).
\end{equation}
At last, if $z\in B^{(n)}\backslash (B_{\frac{|x|}{4}}(z_0)\cup B_{\frac{|x|}{4}}(x) )$, we have that
\begin{equation}
\label{thirdpiece1}
 \log(\tfrac{|z-z_0|}{|x-z|})\leq C.
\end{equation}
Then, by using \eqref{Greenrepresentation:wn},\eqref{firstpiece1},\eqref{secondpiece1} and \eqref{thirdpiece1}, for every $2R\leq|x|\leq \frac{r}{2\epsilon_n}$, we deduce that
\begin{align}\nonumber
 w_n(x)&\leq -2(1+\bns)\log\theta_n-(4-\tfrac{\epsilon}{2})\log|x|+ C_{1,\epsilon}+\tfrac{\epsilon}{2}\log|x|+(8\pi+o(1))C\\
 &\leq -2(1+\bns)\log\theta_n-(4-\epsilon)\log|x|+ C_{2,\epsilon} \nonumber
\end{align}
and consequently,
\begin{align}
 \label{initialestimatewn2}
 V_ne^{\dis w_n}\leq C\theta_n^{-2(1+\bns)}|x|^{2\bns-4+\epsilon},
\end{align}
for every $2R\leq|x|\leq \tfrac{r}{2\epsilon_n}$. We notice that, since $\bns<1$, we can choose $\epsilon>0$ such that $2\bns-4+\epsilon<-2$, for $n$ large enough.

\subsubsection{Integral representation for $w_n$ and refined estimate}\hfill\\
Our next goal is to prove that
\begin{equation}
 \label{refinedestimate:wn}
 w_n(x)=-\tfrac{M_n}{2\pi}\log|x|-2(1+\bns)\log\theta_n+O(1), \qquad\,\,6R\leq|x|\leq\tfrac{r'}{\epsilon_n},
\end{equation}
where we remind that
\[
 M_n=\int_{B_{\frac{r}{2\epsilon_n}}}V_ne^{w_n}.
\]

By inspection on \eqref{Greenrepresentation:wn} (where without loss of generality we replace $r$ with $\tfrac{r}{2}$), we see that it is enough to show that
\begin{equation}
\label{integrallogarithmicestimate}
 \tfrac{1}{2\pi}\int_{B_{\frac{r}{2\epsilon_n}}}\log\left(\tfrac{|x||z-z_0|}{|x-z|}\right) V_ne^{w_n}=O(1).
\end{equation}
Firstly, we notice that
\begin{equation}
 \label{logarithmestimate}
 \left|\log\left(\tfrac{|x||z-z_0|}{|x-z|}\right)\right|\leq 2|\log|z-z_0||+\left(\log\left(\tfrac{1}{|x-z|}\right)\right)^++\log (2R_0),
\end{equation}
for every $6R\leq|x|\leq\tfrac{r'}{\epsilon_n}$ and $|z|\leq\tfrac{r}{2\epsilon_n}$, where $f^+=\max\{0,f\}$. This is a slight variation of (5.6.34) in \cite{tar-sd}, we prove it discussing different cases.\\
If $z\in B_{\frac{|x|}{2}}(0)$, then $|x-z|\geq\tfrac{|x|}{2}>1$ and $\left(\log\left(\tfrac{1}{|x-z|}\right)\right)^+=0$ and
\[
 \left|\log\left(\tfrac{|x||z-z_0|}{|x-z|}\right)\right|=\left|\log|z-z_0|-\log(\big|\tfrac{x}{|x|}-\tfrac{z}{|x|}\big|)\right|\leq |\log|z-z_0||+\log2.
\]
If $z\in B_{\frac{|x|}{2}}(x)$, then $3R\leq\tfrac{|x|}{2}\leq|z|\leq\tfrac{3}{2}|x|$ and
\[
 \log\left(\tfrac{|x||z-z_0|}{|x-z|}\right)\geq\log(2|z-z_0|)\geq\log(8R_0)>0,
\]
and then we have,
\begin{align*}
 \left|\log\left(\tfrac{|x||z-z_0|}{|x-z|}\right)\right|=\log\left(\tfrac{|x||z-z_0|}{|x-z|}\right)&\leq \log|z-z_0|+\left(\log\left(\tfrac{1}{|x-z|}\right)\right)^++\log |x|\\
 &\leq \log|z-z_0|+\left(\log\left(\tfrac{1}{|x-z|}\right)\right)^++\log(2|z|)\\
 &\leq 2|\log|z-z_0||+\left(\log\left(\tfrac{1}{|x-z|}\right)\right)^++\log 2R_0.
\end{align*}
At last, if $z\in B_{\frac{r'}{\epsilon_n}}\backslash\left(B_{\frac{|x|}{2}}(0)\cup B_{\frac{|x|}{2}}(x)\right)$, then $\tfrac{1}{|x-z|}<\tfrac{2}{|x|}<\tfrac{1}{3R}<1$, therefore we have that, $\left(\log\left(\tfrac{1}{|x-z|}\right)\right)^+=0$. Moreover we have, $|x|+|z|\leq 3|z|$, and then,
\begin{align*}
 \log\left(\tfrac{|x||z-z_0|}{|x-z|}\right)&\geq \log\left(\tfrac{|x||z-z_0|}{|x|+|z|}\right)\geq \log\left(\tfrac{|x||z-z_0|}{3|z|}\right)\\
 &= \log\left(\tfrac{|x|}{3}\left|\tfrac{z}{|z|}-\tfrac{z_0}{|z|}\right|\right)\geq\log(\tfrac{1}{4}|x|)\geq\log(3R_0)>0.
\end{align*}
Hence,
\[
 \left|\log\left(\tfrac{|x||z-z_0|}{|x-z|}\right)\right|=\log\left(\tfrac{|x||z-z_0|}{|x-z|}\right)\leq|\log|z-z_0||+\log2.
\]
This concludes the proof of \eqref{logarithmestimate}.\\
At this point, we are able to obtain the estimate \eqref{integrallogarithmicestimate}. Indeed, by using \eqref{initialestimatewn2} and \eqref{logarithmestimate} we have that
\begin{align*}
    & \int\limits_{B_{\frac{r}{2\epsilon_n}}}\left|\log\left(\tfrac{|x||z-z_0|}{|x-z|}\right) V_ne^{w_n}\right|\leq\\ &C_1\int\limits_{B_\frac{r}{2\epsilon_n}}|\log|z-z_0||V_n e^{w_n}+ C_2\int\limits_{|z-x|\leq 1}\log\left(\tfrac{1}{|x-z|}\right)V_ne^{w_n}+C_3\leq C,
\end{align*}
which shows the validity of \eqref{refinedestimate:wn}. By using this refined estimate, we deduce that
\begin{equation*}
 \label{decaymass:Vnwn}
 V_ne^{w_n}=O\left(\frac{\theta_n^{-2(1+\bns)}}{|x|^{\gamma_n+1}}\right),\qquad |x|\in[6R,\tfrac{r'}{\epsilon_n}]
\end{equation*}
where $\gamma_n:=\frac{M_n}{2\pi}-2\bns-1$, and we are able to obtain the following pointwise estimate,
\begin{equation*}
 w_n(x)=\log\left(\frac{\theta_n^{2(1+\bns)}}{(1+\bar\gamma_n\theta_n^{2(1+\bns)}|x-\tfrac{x_n}{\epsilon_n}|^{\frac{M_n}{2\pi}})^2}\right)+O(1),\qquad |x|\in[6R,\tfrac{r'}{\epsilon_n}]
\end{equation*}
with $\bar\gamma_n=\tfrac18V_n(\tfrac{x_n}{\epsilon_n})$. Scaling back to $v_n$, this actually implies that
\begin{equation*}
 v_n(y)=\log\left(\frac{\delta_n^{-2(1+\bns)}}{(1+\bar\gamma_n\theta_n^{2(1+\bns)}\epsilon_n^{^{-\frac{M_n}{2\pi}}}|y-x_n|^{\frac{M_n}{2\pi}})^2}\right)+O(1),\qquad |x|\in[6\epsilon_nR,r']
\end{equation*}
which is \eqref{profilew:H}, as claimed. \finedim

\section*{Acknowledgements}

We are indebted to D. Benedetto, E. Caglioti and M. Nolasco for insightful discussions on uniform estimates for entropy maximizers in their recent paper \cite{BCN24}. A careful adaptation of their argument was crucial for the proof of Lemma \ref{lem:connect}.

\smallskip
Daniele Bartolucci and Paolo Cosentino were partially supported by the MIUR Excellence Department Project MatMod@TOV awarded to the Department of Mathematics, University of Rome ``Tor Vergata'', CUP E83C23000330006, by PRIN project 2022 2022AKNSE4, ERC PE1\_11, ``{\em Variational and Analytical aspects of Geometric PDEs}'', by the E.P.G.P. Project sponsored by the University of Rome ``Tor Vergata'', E83C25000550005, and by INdAM-GNAMPA Project, CUP E53C25002010001. They are members of the INDAM Research Group ``Gruppo Nazionale per l’Analisi Matematica, la Probabilit\`a e le loro Applicazioni".\\
L. Wu was partially supported by the National Natural Science Foundation of China (12201030).

\end{document}